\documentclass{article}

\pdfoutput = 1 %Setting pdflatex on arxiv
% if you need to pass options to natbib, use, e.g.:     \PassOptionsToPackage{numbers, compress}{natbib}
% before loading neurips_2020

% ready for submission
% \usepackage{neurips_2020}

% to compile a preprint version, e.g., for submission to arXiv, add add the
% [preprint] option:
%     \usepackage[preprint]{neurips_2020}

% to compile a camera-ready version, add the [final] option, e.g.:
    \usepackage[final, nonatbib]{neurips_2020}

% to avoid loading the natbib package, add option nonatbib:
%     \usepackage[nonatbib]{neurips_2020}
 %  	 \usepackage[numbers]{natbib}

\usepackage[utf8]{inputenc} % allow utf-8 input
\usepackage[T1]{fontenc}    % use 8-bit T1 fonts
\usepackage{hyperref}       % hyperlinks
\usepackage{url}            % simple URL typesetting
\usepackage{booktabs}       % professional-quality tables
\usepackage{amsfonts}       % blackboard math symbols
\usepackage{nicefrac}       % compact symbols for 1/2, etc.
\usepackage{microtype}      % microtypography

%Our packages
\usepackage{mathtools}
\usepackage{amssymb}
\usepackage{amsthm}
\usepackage{mathabx}
\usepackage{float}
\usepackage{subfigure}
\usepackage{babel}
\usepackage{algorithm, algorithmic}  
\usepackage{color}
\usepackage[final]{changes}

\theoremstyle{plain}
\newtheorem{thm}{\protect\theoremname}[section]
\theoremstyle{definition}

\theoremstyle{plain}
\newtheorem{lem}[thm]{\protect\lemmaname}
\theoremstyle{plain}
\newtheorem{corollary}[thm]{\protect\corollaryname}
\theoremstyle{plain}
\newtheorem{assumption}[thm]{\protect\assumptionname}
\theoremstyle{plain}
\newtheorem{prop}[thm]{\protect\propositionname}
\theoremstyle{definition}

\theoremstyle{remark}

\providecommand{\assumptionname}{Assumption}
\providecommand{\definitionname}{Definition}
\providecommand{\examplename}{Example}
\providecommand{\lemmaname}{Lemma}
\providecommand{\propositionname}{Proposition}
\providecommand{\remarkname}{Remark}
\providecommand{\theoremname}{Theorem}
\providecommand{\corollaryname}{Corollary}

\title{A Feasible Level Proximal Point Method for Nonconvex Sparse Constrained Optimization}

% The \author macro works with any number of authors. There are two commands
% used to separate the names and addresses of multiple authors: \And and \AND.
%
% Using \And between authors leaves it to LaTeX to determine where to break the
% lines. Using \AND forces a line break at that point. So, if LaTeX puts 3 of 4
% authors names on the first line, and the last on the second line, try using
% \AND instead of \And before the third author name.

\author{%
  Digvijay Boob\thanks{Work done when author was at Georgia Tech.} \\
  Southern Methodist University\\
  Dallas, TX \\
  \texttt{dboob@smu.edu} \\
  % examples of more authors
   \And
   Qi Deng \\
   Shanghai university of Finance \& Economics\\
   Shanghai, China\\
   \texttt{qideng@sufe.edu.cn} \\
  \AND
  Guanghui Lan\\
  Georgia Tech\\
  Atlanta, GA\\
  \texttt{george.lan@isye.gatech.edu} \\
   \And
  Yilin Wang\\
  Shanghai university of Finance \& Economics\\
   Shanghai, China\\
 \texttt{2017110765@live.sufe.edu.cn} \\
  % \And
  % Coauthor \\
  % Affiliation \\
  % Address \\
  % \texttt{email} \\
}

\begin{document}

\DeclarePairedDelimiterX{\gnorm}[3]{\lVert}{\rVert_{#2}^{#3}}{#1}
\DeclarePairedDelimiter\abs{\lvert}{\rvert}
\DeclarePairedDelimiter{\bracket}{ [ }{ ] }
\DeclarePairedDelimiter{\paran}{(}{)}
\DeclarePairedDelimiter{\braces}{\lbrace}{\rbrace}
\DeclarePairedDelimiter{\floor}{\lfloor}{\rfloor}
\DeclarePairedDelimiter{\ceil}{\lceil}{\rceil}
\DeclarePairedDelimiterX{\inprod}[2]{\langle}{\rangle}{#1, #2}
\DeclarePairedDelimiterX{\inprodo}[3]{\langle}{\rangle_{#3}}{#1, #2}
\providecommand{\dx}[2]{\frac{d #1}{d #2}}
\providecommand{\dxx}[2]{\frac{d^2 #1}{d {#2}^2}}
\providecommand{\dxy}[3]{\frac{d^2 #1}{d {#2} d{#3}}}
\providecommand{\dox}[2]{\frac{\partial #1}{\partial #2}}
\providecommand{\doxx}[2]{\frac{\partial^2 #1}{\partial {#2}^2}}
\providecommand{\doxy}[3]{\frac{\partial^2 #1}{\partial {#2} \partial{#3}}}
\providecommand{\at}[3]{\left.#1\right\vert_{#2}^{#3}}
\providecommand{\br}[1]{{\left(#1\right)}}
\providecommand{\sbr}[1]{{\ast\left(#1\right)}}

\newcommand{\wb}[1]{{\widebar{#1}}}
\newcommand{\wt}[1]{{\widetilde{#1}}}
\newcommand{\grad}{\nabla}
\newcommand{\lyxdot}{.}

\global\long\def\inprod#1#2{\left\langle #1,#2\right\rangle }%

\global\long\def\norm#1{\left\Vert #1\right\Vert }%

\global\long\def\matr#1{\bm{#1}}%

\global\long\def\til#1{\tilde{#1}}%

\global\long\def\wtil#1{\widetilde{#1}}%

\global\long\def\wh#1{\widehat{#1}}%

\global\long\def\mcal#1{\mathcal{#1}}%

\global\long\def\mbb#1{\mathbb{#1}}%

\global\long\def\mtt#1{\mathtt{#1}}%

\global\long\def\ttt#1{\texttt{#1}}%

\global\long\def\br#1{\left(#1\right)}%

\global\long\def\norm#1{\left\Vert #1\right\Vert }%

\global\long\def\onebf{\mathbf{1}}%

\global\long\def\zerobf{\mathbf{0}}%

%\begin{comment}
%\global\long\def\rank{\operatornamewithlimits{rank}}%

%\global\long\def\argmin{\text{argmin}}%
%\end{comment}

\global\long\def\rank{\text{rank}}%

\global\long\def\diam{\text{diam}}%

\global\long\def\epi{\text{epi }}%

\global\long\def\inte{\operatornamewithlimits{int}}%

\global\long\def\cov{\text{Cov}}%

\global\long\def\argmin{\operatornamewithlimits{argmin}}%

\global\long\def\argmax{\operatornamewithlimits{argmax}}%

\global\long\def\tr{\operatornamewithlimits{tr}}%

\global\long\def\dis{\operatornamewithlimits{dist}}%

\global\long\def\sign{\operatornamewithlimits{sign}}%

\global\long\def\prob{\text{Prob}}%

\global\long\def\st{\operatornamewithlimits{s.t.}}%

\global\long\def\dom{\text{dom}}%

\global\long\def\diag{\text{diag}}%

\global\long\def\and{\text{and}}%

\global\long\def\st{\text{s.t.}}%

\global\long\def\Var{\operatornamewithlimits{Var}}%

\global\long\def\raw{\rightarrow}%

\global\long\def\law{\leftarrow}%

\global\long\def\Raw{\Rightarrow}%

\global\long\def\Law{\Leftarrow}%

\global\long\def\vep{\varepsilon}%

\global\long\def\dom{\operatornamewithlimits{dom}}%

\global\long\def\tsum{{\textstyle {\sum}}}%

\global\long\def\Ebb{\mathbb{E}}%

\global\long\def\Nbb{\mathbb{N}}%

\global\long\def\Rbb{\mathbb{R}}%

\global\long\def\extR{\widebar{\mathbb{R}}}%

\global\long\def\Pbb{\mathbb{P}}%

\global\long\def\Acal{\mathcal{A}}%

\global\long\def\Bcal{\mathcal{B}}%

\global\long\def\Ccal{\mathcal{C}}%

\global\long\def\Dcal{\mathcal{D}}%

\global\long\def\Fcal{\mathcal{F}}%

\global\long\def\Gcal{\mathcal{G}}%

\global\long\def\Hcal{\mathcal{H}}%

\global\long\def\Kcal{\mathcal{K}}%

\global\long\def\Lcal{\mathcal{L}}%

\global\long\def\Mcal{\mathcal{M}}%

\global\long\def\Ncal{\mathcal{N}}%

\global\long\def\Ocal{\mathcal{O}}%

\global\long\def\Pcal{\mathcal{P}}%

\global\long\def\Scal{\mathcal{S}}%

\global\long\def\Tcal{\mathcal{T}}%

\global\long\def\Xcal{\mathcal{X}}%

\global\long\def\Ycal{\mathcal{Y}}%

\global\long\def\Ubf{\mathbf{U}}%

\global\long\def\Pbf{\mathbf{P}}%

\global\long\def\Ibf{\mathbf{I}}%

\global\long\def\Ebf{\mathbf{E}}%

\global\long\def\Abs{\boldsymbol{A}}%

\global\long\def\Qbs{\boldsymbol{Q}}%

\global\long\def\Lbs{\boldsymbol{L}}%

\global\long\def\Pbs{\boldsymbol{P}}%

\global\long\def\i{i}%

\maketitle

\begin{abstract}
Nonconvex sparse models have received significant attention in high-dimensional machine learning. 
In this paper, we study a new model consisting of a general convex or nonconvex objectives and a variety of continuous nonconvex sparsity-inducing constraints. For this constrained model, we propose a novel proximal point algorithm that solves a sequence of convex subproblems with gradually relaxed constraint levels. Each subproblem, having a proximal point objective and a convex surrogate constraint, can be efficiently solved based on a fast routine for projection onto the surrogate constraint. We establish the asymptotic convergence of the proposed algorithm to the Karush-Kuhn-Tucker (KKT) solutions. We also establish new convergence complexities to achieve an approximate KKT solution when the objective can be smooth/nonsmooth, deterministic/stochastic and convex/nonconvex with complexity that is on a par with gradient descent for unconstrained optimization problems in respective cases. To the best of our knowledge, this is the first study of the first-order methods with complexity guarantee for nonconvex sparse-constrained problems. We perform numerical experiments to demonstrate the effectiveness of our new model and efficiency of the proposed algorithm for large scale problems.\end{abstract}

\section{Introduction}
Recent years have witnessed a great deal of work on the sparse optimization arising from
machine learning, statistics and signal processing. 
A fundamental challenge in this area lies in finding the best set of size $k$ out of a total of $d$ ($k<d$) features to form a parsimonious fit to the data:
\begin{equation}\label{l0prob}
	\min\ \psi(x),\quad\text{subject to}\quad \gnorm{x}{0}{}\le k, x\in\Rbb^d.
\end{equation}
\iffalse
The $\ell_{0}$ norm constraint explicitly forces
feature selection, and has attracted significant
attention in machine learning, statistics and signal processing, etc.
\fi
However, due to the discontinuity of $\gnorm{\cdot}{0}{}$ norm\footnote{Note that $\norm{\cdot}_0$ is not a norm in mathematical sense. Indeed, $\norm{x}_0 = \norm{tx}_0$ for any nonzero $t$.}, the above problem
 is intractable when there is no other assumptions. To bypass this difficulty, a popular approach is to replace
the $\ell_{0}$-norm by the $\ell_1$-norm,  giving rise to
an $\ell_{1}$-constrained or $\ell_{1}$-regularized 
problem.
A notable example is the Lasso (\cite{RN247}) approach for linear regression and its regularized variant
\begin{align} 
	&\min\ \gnorm{b-Ax}{2}{2}, \hspace{2em}\text{subject to} \ \gnorm{x}{1}{}\le \tau, x \in \Rbb^d; \label{lasso}\\
	&\min\ \gnorm{b-Ax}{2}{2} +\lambda \gnorm{x}{1}{}.\label{l1-regularization}
\end{align}
Due to the Lagrange duality theory, problem \eqref{lasso} and \eqref{l1-regularization} are equivalent in the sense that there is a one-to-one mapping between the parameters $\tau$ and $\lambda$.
A substantial amount of literature already exists for understanding the statistical properties of $\ell_1$ models  (\cite{zhao2006model,wainwright2009sharp,candes2009near,zhang2008sparsity,Kyrillidis12}) as well as for the development efficient algorithms when such models are employed (\cite{efron2004least,RN16,RN179,wright2009sparse, Kyrillidis12}).

In spite of their success, $\ell_{1}$ models suffer from the issue of biased estimation of large coefficients \cite{fan2001variable} and empirical merits of using nonconvex approximations were shown in \cite{scutari2017parallel}. 
Due to these observations, a large body of recent research looked at replacing the $\ell_1$-penalty in \eqref{l1-regularization} by a nonconvex function $g(x)$ to obtain sharper approximation of the $\ell_{0}$-norm:
\begin{equation} \label{noncvx-regularization}
	\min\ \psi(x) +\beta g(x),
\end{equation}
where , throughout the paper, $g(x)$ is a nonsmooth nonconvex function  of the form \[g(x) = \lambda \norm{x}_1 - h(x).\]
Here $h(x)$ is a convex and continuously differentiable function, giving $g(x)$ a DC form. This class of constraints already covers many important nonconvex sparsity inducing functions in the literature (see Table \ref{tab:constraint_fun_examples}).

Despite the favorable statistical properties (\cite{fan2001variable,zhang2010nearly,candes2007enhancing,zhang2012a}), nonconvex models have posed a great challenge for optimization algorithms and has been increasingly an important issue (\cite{gotoh2018dc,gong2013a,RN343,thi2015dc}).
While most of these works studied the regularized version, it is often favorable to consider the following constrained form:
\begin{equation} \label{noncvx-constraint}
	\min\ \psi(x), \quad\text{subject to} \quad g(x)\le \eta, x \in \Rbb^d,
\end{equation}
since sparsity of solutions is imperative in many applications of statistical learning and constrained form in \eqref{noncvx-constraint} explicitly imposes such a requirement. In contrast, \eqref{noncvx-regularization} imposes sparsity implicitly using penalty parameter $\beta$. However, unlike the convex problems, large values of $\beta$ do not necessarily imply small value of the nonconvex penalty $g(x)$.

Therefore, it is natural to ask \emph{whether we can provide an efficient algorithm for problem \eqref{noncvx-constraint}.} The continuous nonconvex relaxation \eqref{noncvx-constraint} of the $\ell_{0}$-norm in \eqref{l0prob}, albeit a straightforward one, was not studied in the literature. We suspect that to be the case due to the difficulty in handling nonconvex constraints algorithmically. There are two theoretical challenges: %preventing straightforward application of existing techniques for solving problem \eqref{noncvx-constraint}.
First, since the regularized form \eqref{noncvx-regularization} and the constrained form \eqref{noncvx-constraint} are not equivalent due to the nonconvexity of $g(x)$, we cannot bypass \eqref{noncvx-constraint} by solving problem \eqref{noncvx-regularization} instead.
Second, the nonconvex function $g(x)$ can be nonsmooth especially for the sparsity applications, presenting a substantial challenge for classic nonlinear programming methods, e.g., augmented Lagrangian methods and penalty methods (see \cite{RN27}) which assumes that functions are continuously differentiable.

{\bf Our contributions}\hspace{1em} In this paper, we study the newly proposed nonconvex constrained model \eqref{noncvx-constraint}. In particular, we present a novel level-constrained proximal point (LCPP) method for problem \eqref{noncvx-constraint} where the objective $\psi$ can be either deterministic/stochastic, smooth/nonsmooth and convex/nonconvex and the constraint $\{g(x) \le \eta\}$ models a variety of sparsity inducing nonconvex constraints proposed in the literature. The key idea is to translate problem \eqref{noncvx-constraint} into a sequence of convex subproblems where $\psi(x)$ is {\em convexified} using a proximal point quadratic term and $g(x)$ is {\em majorized} by a convex function $\wt{g}(x) [\ge g(x)]$. Note that $\{\wt{g}(x) \le \eta\}$ is a convex subset of the nonconvex set $\{g(x) \le \eta\}$.

We show that starting from a strict feasible point\footnote{Origin is always strictly feasible for sparsity inducing constraints and can be chosen as a starting point.}, LCPP traces a feasible solution path with respect to the set $\{g(x) \le \eta\}$. We also show that LCPP generates convex subproblems for which bounds on the optimal Lagrange multiplier (or the optimal dual) can be provided under a mild and a well-known constraint qualification. % explicit and a priori bound on the optimal Lagrange multipliers can be provided that only depends on $\eta$ % without requiring any constraint qualificationsand each convex subproblem essentially satisfies strong duality. 
This bound on the dual and the proximal point update in the objective allows us to prove asymptotic convergence to the KKT points of the problem \eqref{noncvx-constraint}.

While deriving the complexity, we consider the inexact LCPP method that solves convex subproblems approximately. We show that the constraint, $\wt{g}(x) \le \eta$, has an efficient projection algorithm. Hence, each convex subproblem can be solved by projection-based first-order methods. % for convex optimization
This allows us to be feasible even when the solution reaches arbitrarily close to the boundary of the set $\{g(x)\le \eta\}$ which entails that the bound on the dual mentioned earlier works in the inexact case too. Moreover, efficient projection-based first-order method for solving the subproblem helps us get an accelerated convergence complexity of $O(\tfrac{1}{\vep}) [O(\tfrac{1}{\vep^2})]$ gradient [stochastic gradient] in order to obtain an $\vep$-KKT point. In particular, refer to Table \ref{tab:tab_conv}. We see that in the case where objective is smooth and deterministic, we obtain convergence rate of $O(1/\vep)$ whereas for nonsmooth and/or stochastic objective we obtain convergence rate of $O(1/\vep^2)$. This complexity is nearly the same as that of the gradient [stochastic gradient] descent for the regularized problem \eqref{noncvx-regularization} of the respective type. Remarkably, this convergence rate is better than black-box nonconvex function constrained optimization methods proposed in the literature recently (\cite{boob2019proximal,lin2019inexact}). See related work section for more detailed discussion. Note that the convergence of gradient descent does not ensure a bound on the infeasibility of the constraint $g$, whereas the KKT criterion requires feasibility on top of stationarity. Moreover, such a bound cannot be ensured theoretically due to the absence of duality. Hence, our algorithm provides additional guarantees without paying much in the complexity. %Look at the Table \ref{tab:tab_conv} for a brief summary of our convergence rate results for the various cases.%n particular, only stationary point can have large constraint violation leading to a non-sparse solution. Hence, implying that LCPP exhibits stronger convergence properties while maintaining the same order of complexity in terms of $\vep$.
\begin{table}[t]
	\centering
	\caption{Iteration complexities of LCPP for problem \eqref{noncvx-constraint} when the objective can be either convex or nonconvex, smooth or nonsmooth and  deterministic or stochastic}
	\label{tab:tab_conv}
	\begin{tabular}{c|c c|c c}
		\hline
		&\multicolumn{2}{|c|}{Convex \eqref{noncvx-constraint}}
		 &\multicolumn{2}{|c}{Nonconvex \eqref{noncvx-constraint}}\\\hline
		 Cases &Smooth &Nonsmooth &Smooth &Nonsmooth\\\hline
		 Deterministic &$O(1/\varepsilon)$ &$O(1/\varepsilon^2)$ &$O(1/\varepsilon)$ &$O(1/\varepsilon^2)$\\\hline
		 Stochastic &$O(1/\varepsilon^2)$ &$O(1/\varepsilon^2)$ &$O(1/\varepsilon^2)$ &$O(1/\varepsilon^2)$\\\hline
	\end{tabular}
\end{table}

We perform numerical experiments to measure the efficiency of our LCPP method and the effectiveness of the new constrained model \eqref{noncvx-constraint}. First, we show that our algorithm has competitive running time performance against open-source solvers, e.g., DCCP \cite{shen2016disciplined}. Second, we also compare the effectiveness of our constrained model with respect to the existing convex and nonconvex regularization models in the literature. Our numerical experiments show promising results compared to $\ell_1$-regularization model \ref{l1-regularization} and has competitive performance with respect to recently developed algorithm for nonconvex regularization model \ref{noncvx-regularization} (see \cite{gong2013a}). Given that this is the first study in the development of algorithms for the constrained model, we believe empirical study of even more efficient algorithms solving problem \eqref{noncvx-constraint} may be of independent interest and can be pursued in the future.
%\iffalse
%More specifically, initiating from a strictly feasible point and using increasing levels of constraints for the subproblems, we guarantee that  the algorithm  follows a strict feasible solution path. Therefore, based on Slater condition, there exists Lagrange multipliers attaining strong duality for each subproblem. 
%Then we  analyze the conditions under which the dual variables are bounded, and show asymptotic convergence of the sequence to the KKT points of the original problem.
%
%Furthermore, we establish the convergence complexity in terms of the approximation error to achieve KKT solutions.
%In spite of the common belief that nonconvex constraint is harder than the nonconvex regularization, we show that the level-constrained proximal point method only requires $\Ocal({\tfrac{1}{\vep}})$ gradient/projection counts to obtain an $\vep$-approximate KKT point. If ignoring the constant factors, the achieved complexity is nearly the same as the complexity of gradient descent for unconstrained problems.
%
%Our next contribution is an efficient line-search routine for projection onto the constraint domain of the subproblem. [TODO]                                                                                                                                                                                                                                                                                                                                                                                                                     
%\fi

{\bf Related work} \hspace{1em}
\iffalse
There has been a substantial body of work on functional constraint programming.
One approach for constrained nonlinear program is to consider the penalty model which solves a sequence of unconstrained problems by replacing the constraint with nonnegative penalty terms. Another popular choice is the  Augmented Lagrangian method (\cite{RN27}), which keeps updating both primal and dual variables.
However, there are a few drawbacks of these approaches. First, both of them requires to solve, in view of problem \eqref{noncvx-constraint}, nonsmooth nonconvex penalized subproblems,  hence are more complicated than our approaches. 
Secondly, since these approaches only penalize constraint violation, there is no guarantee of feasibility while our proposed algorithm guarantees strict feasible solution. Hence our approach seems to be appealing if feasibility is a major concern to the practitioners.
\fi
There is a growing interest in using convex majorization for solving nonconvex optimization with nonconvex function constraints.
%The main idea of this approach is to translate the nonconvex problem into a sequence of convex subproblems which can be solved by more mature convex algorithms. 
Typical frameworks include difference-of-convex (DC) programming (\cite{thi2018dc}), majorization-minimization (\cite{sun2017majorization}) to name a few.  Considering the substantial literature, we  emphasize the most relevant work to our current paper.
Scutari et al. \cite{scutari2017parallel} proposed general approaches to majorize nonconvex constrained problems and include \eqref{noncvx-constraint} as a special case. They require exact solutions of the subproblems and prove asymptotic convergence which is prohibitive for large-scale optimization. 
%Moreover, they establish asymptotic convergence while we provide asymptotic convergence as well as complexity results for a variety of problems. 
Shen et al. \cite{shen2016disciplined} proposed a disciplined convex-concave programming (DCCP) framework for a class of DC programs in which \eqref{noncvx-constraint} is a special case. %They extend ideas from disciplined convex programming (DCP) to the nonconvex cases by making sure that each convex subproblem obtained from convex-concave procedure falls within disciplined convex programming. 
Their work is empirical and does not provide specific convergence results.

The more recent works \cite{boob2019proximal,lin2019inexact} considered a more general problems where $g(x) = \wt{h}(x) - h(x)$ for some general convex function $\wt{h}$. They propose a type of proximal point method in which large enough quadratic proximal term is added into both objective and constraint in order to obtain a convex subproblem. This convex function constrained subproblem can be solved by oracles whose output solution might have small infeasibility. Moreover these oracles have weaker convergence rates due to generality of function $\wt{h}$ over $\ell_{1}$. Complexity results proposed in these works, when applied to problem \eqref{noncvx-constraint}, entail $O(1/\vep^{3/2})$ iterations for obtaining an $\vep$-KKT point under a {\em strong feasibility} constraint qualification.
In similar setting, we show faster convergence result of $O(1/\vep)$. This due to the fact that our oracle for solving the subproblem is more efficient than those used in their paper. We can obtain such an oracle due to two reasons: i) convex surrogate constraint $\wt{g}$ in LCPP majorizes the constraint differently than adding the proximal quadratic term, ii) presence of $\ell_1$ in the form of $g(x)$ allows for developing an efficient projection mechanism onto the chosen form of $\wt{g}$. Moreover, our convergence results hold under a well-known constraint qualification which is weaker compared to {\em strong feasibility} since our oracle outputs a feasible solution whereas they can get a solution which is slightly infeasible. %Another benefit of feasibility is oracle satisfying feasibility in the output solution as it gets arbitrarily close to the boundary of the set $\{g(x) \le \eta \}$.

There is also a large body of work on directly optimizing the $\ell_0$ constraint problem
%\replaced{
\cite{bertsimas2016best,blumensath2008iterative, foucart2011hard,yuan2017gradient,zhou2018efficient}. While \cite{bertsimas2016best} can be quite good for small dimension $d = 1000$s, it remains unclear how to scale up for larger datasets. Other methods are part of the hard-thresholding algorithms, requiring additional assumptions such as {\em Restricted Isometry Property}. These research areas, though  interesting, are not related to the continuous optimization setting where large-scale problems can be solved relatively easily. Henceforth, we only focus on the continuous approximations of $\ell_0$-norm.%}{For example,  by reformulating \eqref{l0prob} as a mixed-integer quadratic program, the work \cite{bertsimas2016best} applied MIP solvers such as Gurobi to obtain optimal solution. %, and near optimal solution if early stop is adopted. While their approach is efficient for problems of size up to $d=1000s$ due integer programming formulations, scalin up for larger datasets is difficult. Another research line focuses on the hard-thresholding algorithms by performing proximal gradient for the $\ell_0$-penalty or projected gradient on the $\ell_0$ ball \mbox{\cite{blumensath2008iterative, foucart2011hard,yuan2017gradient,zhou2018efficient}}. Convergence of the algorithms have been established under some additional assumptions, such as coherence or restrictive isometry property (RIP). In contrast, our algorithm doesn't require any extra assumptions. We employ fundamentally different techniques to prove convergence for more extensive problem types, including stochastic, nonsmooth and nonconvex objective functions.}

{\bf Structure of the paper}\hspace{1em}
Section \ref{sec:Setup} presents the problem setup and preliminaries. 
Section \ref{sec:Main-algo} introduces LCPP method and shows the asymptotic
convergence, convergence rates and the boundedness of the optimal dual. %under mild assumptions. This section 
%and shows bounds on the optimal Lagrange multipliers. Section \ref{sec:Complexity} establishes the rates of convergence of the proposed methods to KKT points for deterministic and stochastic optimization, respectively. %Section \ref{sec:Projection}provides efficient implementation of projection onto the constraint domain for the subproblem. 
Section \ref{sec:Experiments} presents numerical results.
%experiments to demonstrate the advantages of proposed methods. 
Finally, Section \ref{sec:Conclusion} draws conclusion.% and comment on several future directions.

\section{Problem setup\label{sec:Setup}}
Our main goal is to solve problem \eqref{noncvx-constraint}. We make Assumption \ref{ass:general} throughout the paper.
%\paragraph{General assumptions}
\begin{assumption}\label{ass:general}
1. $\psi(x)$ is a continuous and possibly nonsmooth nonconvex function %for which we assume  bounded negative lower curvature $\mu(\ge 0)$ 
satisfying%\footnote{Here, we assumed that $\psi$ is differentiable. However, our work can be extended to nonsmooth cases easily. Details can be found in the appendix.}
		: 
		\begin{equation}\label{eq:low_curv_psi}
		\psi(x)\ge\psi(y)+\inprod{\psi'(y)}{x-y} -\tfrac{\mu}{2}\norm{x-y}^2.
		\end{equation}  %Moreover, we assume that $\psi$ satisfies level-boundedness assumption, i.e., for any $r\in(-\infty,\infty)$, the set $\left\{ x:\psi(x)<r\right\} $ is a bounded set.
		2. $g(x)$ is a nonsmooth nonconvex function of the form $g(x)=\lambda\|x\|_{1}-h(x)$, where $h(x)$ is convex and continuously differentiable. 
\end{assumption}
\begin{table}[h]
	\caption{Examples of constraint function $g(x) = \lambda\gnorm{x}{1}{} -h(x)$. %It is convenient that $\lambda$ for some functions be written in terms of other parameters$
	}
	\label{tab:constraint_fun_examples}
	\centering
	\begin{tabular}{c c l}\hline
		Function $g(x)$ & Parameter $\lambda$ & Function $h(x)$\\\hline
		%Capped-$\ell_1$\cite{Blah} &$\lambda$ &$h_\lambda(x) = \begin{cases}0 &\text{if }\abs{x} \le \tfrac{1}{\lambda},\\
		%\lambda (\abs{x} - \frac{1}{\lambda}) &\text{if }\abs{x} > \frac{1}{\lambda}.
		%\end{cases}$ \\\hline
		MCP\cite{zhang2010nearly}& $\lambda$ & $h_{\lambda,\theta}(x) = \begin{cases}
		\frac{x^{2}}{2\theta} & \text{if }|x|\le\theta\lambda,\\
		\lambda\left|x\right|-\frac{\theta\lambda^{2}}{2} & \text{if }\abs{x} > \theta\lambda.
		\end{cases}\label{eq:mcp-concave}$\\
		SCAD\cite{fan2001variable} &$\lambda$ &$h_{\lambda, \theta}(x) = \begin{cases} 0 & \text{if }|x|\le\lambda,\\ \tfrac{x^{2}-2\lambda|x|+\lambda^{2}}{2(\theta-1)} & \text{if }\lambda<|x|\le\theta\lambda,\\ \lambda|x|-\tfrac{1}{2}(\theta+1)\lambda^{2} & \text{if }|x|>\theta\lambda. \end{cases}\label{eq:scad}$\\[6mm]
		Exp\cite{Bradley98} &$\lambda$ &$h_\lambda(x) =  e^{-\lambda \abs{x}}-1 + \lambda\abs{x}$.\\[0.5mm]
		Log\cite{Weston02} &$\frac{\theta}{\log(1+\theta)}$ &$h_{\theta}(x) = \frac{\theta}{\log(1+\theta)}\abs{x} - \frac{\log(1+\theta\abs{x})}{\log(1+\theta)}$.\\[1.5mm]
		$\ell_p(0<p<1)$\cite{Fu98}&$\frac{\varepsilon^{1/\theta-1}}{\theta}$ &$h_{\varepsilon,\theta}(x) = \frac{\varepsilon^{1/\theta-1}}{\theta}\abs{x} - (\abs{x}+\varepsilon)^{1/\theta}$.\\[1mm]
		$\ell_p (p<0)$\cite{Rao99} &$-p\theta$ &$h_{\theta}(x) = -p\theta\abs{x} - 1+(1+\theta\abs{x})^p$.\\\hline
	\end{tabular}
\end{table}
\begin{figure}[t]
	\caption{Graphs for various constraints along with $\ell_1$. For $\ell_p (0<p<1)$, we have $\vep = 0.1$ .}
	\label{fig:constraint_plots}
	\centering
	\includegraphics[width=0.6\linewidth]{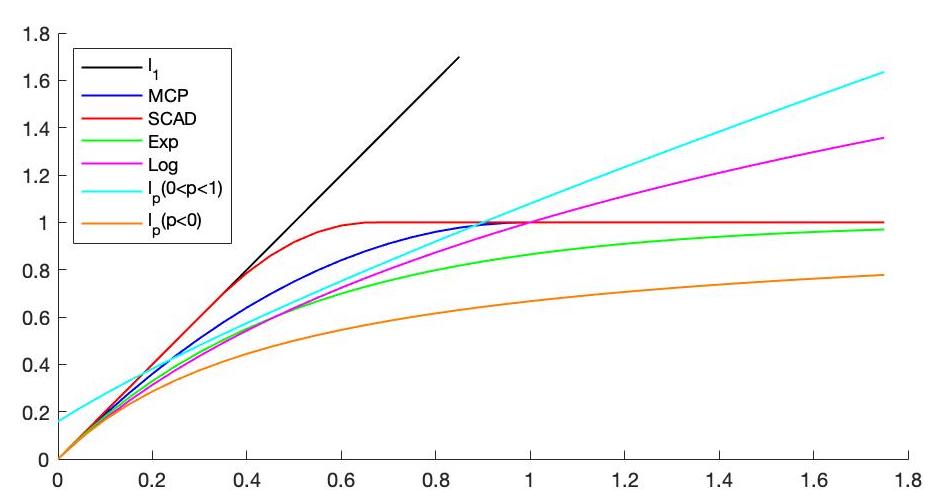}
\end{figure}
%Important examples of sparsity inducing constraint function $g(x)$ that we consider in the paper:
%\begin{example}
%[SCAD] $g(x)=\lambda\norm x_{1}-\tsum_{i=1}^{d}h_{\lambda,\theta}(x_{i})$
%with\begin{equation} h_{\lambda,\theta}(x)=\begin{cases} 0 & \text{if }|x|\le\lambda\\ \tfrac{x^{2}-2\lambda|x|+\lambda^{2}}{2(\theta-1)} & \text{if }\lambda<|x|\le\theta\lambda\\ \lambda|x|-\tfrac{1}{2}(\theta+1)\lambda^{2} & \text{if }|x|>\theta\lambda \end{cases}\label{eq:scad} \end{equation}
%\end{example}

%\begin{example}
%[MCP] $g(x)=\lambda\norm x_{1}-\tsum_{i=1}^{d}h_{\lambda,\theta}(x_{i})$
%with 
%\begin{equation}
%h_{\lambda,\theta}(x)=\begin{cases}
%\tfrac{x^{2}}{2\theta} & \text{if }|x|\le\theta\lambda\\
%\lambda\left|x\right|-\tfrac{\theta\lambda^{2}}{2} & \text{o.w.}
%\end{cases}\label{eq:mcp-concave}
%\end{equation}
%\end{example}
{\bf Notations}\hspace{1em}
We use $\gnorm{\cdot}{}{}$ to denote standard Euclidean norm whereas $\ell_{1}$-norm is denoted as $\gnorm{\cdot}{1}{}$.
The Lagrangian function for problem \eqref{noncvx-constraint} is defined as $\Lcal(x,y)=\psi(x)+y(g(x)-\eta)$ where $y \ge 0$.
%where $\mathbf{1}_{\Xcal}$
%denotes the indicator function on set $\Xcal$\footnote{The indicator function is defined as $\mathbf{1}_{\Xcal}(x)=\begin{cases}
%0 & x\in\Xcal\\
%+\infty & \text{o.w.}
%\end{cases}$}
%We use $N_{\Xcal}(x)=\left\{ y:\inprod y{\bar{x}-x}\le0,\forall\bar{x}\in\Xcal\right\} $
%and $N_{\Xcal}^{*}$ to denote the normal cone and its dual cone, respectively. 
%In particular, for $\Xcal=\Ebb$, $N_{\Xcal}(x)=\emptyset$.
%Moreover, basic convex analysis shows that normal cone is the sub-differential
%of the indicator function: $N_{\Xcal}(x)=\partial\mathbf{1}_{\Xcal}(x)$.
For nonconvex nonsmooth function $g(x)$ in the form of (\ref{eq:mcp-concave}), we denote its {\em subdifferential}\footnote{Various subdifferentials exist in the literature for nonconvex optimization problem. Here, we use subdifferential Definition 3.1 in Boob et al. \cite{boob2019proximal} for nonconvex nonsmooth function $g$.} by 
$\partial g(x)=\partial\paran{\lambda\gnorm {x}{1}{}}-\nabla h(x)$. For this definition of subdifferential, we consider the following KKT condition:

{\bf The KKT condition} \hspace{1em}For Problem \eqref{noncvx-constraint}, we say that $x$ is the
(stochastic) $(\vep, \delta)$- KKT solution if there exists $\bar{x}$ and $\bar{y}\ge 0$ such that $g(\bar{x}) \le\eta$, $\Ebb\,\|x-\bar{x}\|^{2}  \le\delta$
\begin{equation}
\begin{aligned}\Ebb\left|\bar{y}\left[g(\bar{x})-\eta\right]\right| & \le\vep\\
\Ebb\left[\dis\left(\partial_{x}\Lcal(\bar{x},\bar{y}),0\right)\right]^{2} & \le\vep
\end{aligned}
\label{eq:stoc-approx-kkt}
\end{equation}
%\begin{equation}
%\begin{aligned}\partial_{x}\Lcal(\bar{x},\bar{y}) & \ni0,\\
%\bar{y}\left[g(\bar{x})-\eta\right] & =0.
%\end{aligned}
%\label{eq:kkt-point}
%\end{equation}
Moreover, for $\vep = \delta = 0$, we have that $\bar{x}$ is the KKT solution or satisfied KKT condition. If $\delta = O(\vep)$, we refer to this solution as an $\vep$-KKT solution in order to be brief.\\
It should be mentioned that local or global optimality does not generally imply the
KKT condition. However, KKT condition is shown to be necessary for optimality when Mangasarian-Fromovitz constraint qualification (MFCQ) holds \cite{boob2019proximal}. %MFCQ is a widely used constraint qualificationoptimization \cite{RN27}. 
Below, we make MFCQ assumption precise:
\begin{assumption}[MFCQ \cite{boob2019proximal}]\label{ass:mfcq}
Whenever
the constraint is active: $g(\bar{x})=\eta$, there exists a direction
$z$ %\in-N_{\Xcal}^{*}(\bar{x})$
such that
$
\max_{v\in\partial g(\bar{x})}v^{T}z<0.%\label{eq:mfcq}
$
\end{assumption}\vspace{-.5em}
For differentiable $g$, MFCQ requires existence of $z$ such that $z^T\grad g(\wb{x}) < 0$, reducing to the classical form of MFCQ \cite{RN27}. Below, we summarize necessary optimality condition under MFCQ.
\begin{prop}[Necessary condition \cite{boob2019proximal}]
	\label{thm:MFCQ}
	Let $\bar{x}$ be a local optimal solution of problem \eqref{noncvx-constraint}. If $\bar{x}$ satisfies Assumption \ref{ass:mfcq},
	then there exists $\bar{y} \ge 0$ such that 
	\eqref{eq:stoc-approx-kkt} holds with $\vep = \delta = 0$.
\end{prop}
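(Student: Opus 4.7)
The plan is to split into two cases depending on whether the sparsity constraint is active at $\bar x$. If $g(\bar x)<\eta$, set $\bar y=0$: complementary slackness $\bar y[g(\bar x)-\eta]=0$ is automatic, feasibility holds by hypothesis, and since $\bar x$ lies in an open neighborhood on which the constraint is slack, local optimality for \eqref{noncvx-constraint} reduces to local unconstrained optimality of $\psi$, giving stationarity $0\in\partial\psi(\bar x)$ as required by \eqref{eq:stoc-approx-kkt}. The remaining case is $g(\bar x)=\eta$.

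In the active case, the first step is to rule out a simultaneous descent-and-feasibility direction. Concretely, I would show that no $d\in\Rbb^{d}$ can satisfy both $\max_{u\in\partial\psi(\bar x)}u^{T}d<0$ and $\max_{v\in\partial g(\bar x)}v^{T}d<0$. For $t>0$ small, the first strict inequality combined with Assumption \ref{ass:general}.1 (used through the DC decomposition $\psi=(\psi+\tfrac{\mu}{2}\|\cdot\|^{2})-\tfrac{\mu}{2}\|\cdot\|^{2}$, whose first piece is convex) forces $\psi(\bar x+td)<\psi(\bar x)$, while the second strict inequality combined with the DC form $g=\lambda\|\cdot\|_{1}-h$ forces $g(\bar x+td)<\eta$. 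The existence of such a direction $d$ would therefore contradict the local optimality of $\bar x$.

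The second step is a convex separation argument on the closed convex sets $\partial\psi(\bar x)$ and $\partial g(\bar x)$. The nonexistence of such a $d$ is equivalent, by Gordan's theorem, to the existence of $\alpha_{0},\alpha_{1}\ge 0$, not both zero, with $0\in\alpha_{0}\,\partial\psi(\bar x)+\alpha_{1}\,\partial g(\bar x)$. To upgrade this Fritz--John-type condition to a genuine KKT condition, I invoke Assumption \ref{ass:mfcq}: if $\alpha_{0}=0$, then $0\in\partial g(\bar x)$, which contradicts the MFCQ direction $z$ since one would necessarily have $\max_{v\in\partial g(\bar x)}v^{T}z\ge 0$. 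Hence $\alpha_{0}>0$, and setting $\bar y=\alpha_{1}/\alpha_{0}\ge 0$ yields $0\in\partial\psi(\bar x)+\bar y\,\partial g(\bar x)=\partial_{x}\Lcal(\bar x,\bar y)$. Together with $g(\bar x)=\eta$, which trivially gives complementary slackness, this is exactly \eqref{eq:stoc-approx-kkt} with $\vep=\delta=0$.

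The main obstacle is the first step: justifying a strict directional decrease of the weakly convex, nonsmooth $\psi$ from only a strict subgradient inequality. The lower-curvature bound in Assumption \ref{ass:general}.1 is a one-sided estimate, so one must either pass through the Moreau envelope of $\psi$ or exploit convexity of $\psi+\tfrac{\mu}{2}\|\cdot\|^{2}$ to produce a matching upper estimate $\psi(\bar x+td)\le\psi(\bar x)+t\,u^{T}d+o(t)$ along the chosen subgradient $u$. Both ingredients are already packaged in the subdifferential framework of \cite{boob2019proximal}, and the whole argument mirrors Proposition 3.2 there, adapted to our specialized constraint subdifferential $\partial g(x)=\partial(\lambda\|x\|_{1})-\nabla h(x)$ guaranteed by the DC form in Assumption \ref{ass:general}.2.
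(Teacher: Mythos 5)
Your argument is correct, but note that the paper itself offers no proof of this proposition: it is imported verbatim from \cite{boob2019proximal} (the analogue of Proposition 3.2 there), so there is nothing internal to compare against. Your reconstruction is the standard Fritz--John-plus-constraint-qualification route, and all the delicate points are handled properly: in the inactive case $\bar y=0$ works trivially; in the active case the key upper estimate $\psi(\bar x+td)\le\psi(\bar x)+t\sup_{u\in\partial\psi(\bar x)}u^{T}d+o(t)$ does follow from convexity of $\psi+\tfrac{\mu}{2}\|\cdot\|^{2}$ (which is exactly what \eqref{eq:low_curv_psi} gives, and is consistent with the subdifferential convention $\partial\psi(\bar x)=\partial(\psi+\tfrac{\mu}{2}\|\cdot\|^{2})(\bar x)-\mu\bar x$ used in \cite{boob2019proximal}), and the matching estimate for $g=\lambda\|\cdot\|_{1}-h$ is immediate since $\|\cdot\|_{1}$ is convex and $h$ is $C^{1}$. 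The separation step is also sound: both $\partial\psi(\bar x)$ and $\partial g(\bar x)$ are nonempty compact convex subsets of $\Rbb^{d}$, so the nonexistence of a common strict descent direction is equivalent to $0\in\operatorname{conv}\bigl(\partial\psi(\bar x)\cup\partial g(\bar x)\bigr)$, and MFCQ rules out the degenerate multiplier $\alpha_{0}=0$ exactly as you say. Two cosmetic remarks: you overload $d$ as both the direction and the ambient dimension, and in the inactive case you should also record that $\bar x$ itself serves as the point in \eqref{eq:stoc-approx-kkt} so that the $\delta=0$ requirement $\|x-\bar x\|^{2}=0$ is met; neither affects correctness.
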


\section{A novel proximal point algorithm\label{sec:Main-algo}}

\begin{algorithm}[h]
	\caption{Level constrained proximal point {\bf (LCPP)} method}
	\label{alg:main}
	\begin{algorithmic}[1]
		\STATE {\bfseries Input:} $x^{0}=\hat{x}$, $\gamma>0$, $\eta_{0}<\eta$
%		\REPEAT
%		\STATE Initialize $noChange = true$.
		\FOR{$k=1$ {\bfseries to} $K$}
%		\IF{$x_i > x_{i+1}$}
		\STATE Set $\eta_k = \eta_{k-1} + \delta_{k}$;
		\STATE  $g_k(x) :=\lambda\norm x_{1}-h(x^{k-1})-\nabla h(x^{k-1})^T(x-x^{k-1})$;
		\STATE Return feasible solution $x^k$ of the problem \vspace{-0.5em}
		\begin{equation}\label{subprob}
		\min \psi_{k}(x)=\psi(x)+\tfrac{\gamma}{2}\|x-x^{k-1}\|^{2},\hspace{2em}
		\text{subject to} \quad g_k(x) \le \eta_{k}
		\end{equation}\vspace{-1.5em}
		\ENDFOR
%		\UNTIL{$noChange$ is $true$}
	\end{algorithmic}
\end{algorithm}

%\begin{algorithm}[h]
%\KwIn{$x^{0}=\hat{x}$, $\gamma>0$, $\eta_{0}<\eta$}
%
%\For{k=1,2,...K}{
%
%Compute $\nabla h(x^{k-1})$\;
%
%Set $\eta_{k}=\eta_{k-1}+\delta_{k}$\;
%
%Call an oracle to return a feasible solution $x^{k}$ of the following
%problem
%
%\begin{mini}
%{x \in \Rbb^d}{\psi_{k}(x)=\psi(x)+\tfrac{\gamma}{2}\|x-x^{k-1}\|^{2}}{}{}
%\addConstraint{\lambda\norm x_{1}-h(x^{k-1})-\inprod{\nabla h(x^{k-1})}{x-x^{k-1}}}{\le \eta_k}
%{\label{subprob}}
%\end{mini}
%
%}
%
%\caption{Level constrained proximal point method\label{alg:main}}
%\end{algorithm}
%We describe the proposed framework in Algorithm \ref{alg:main}. 
%The main idea of Algorithm \ref{alg:main} is to turn the original problem into a
LCPP method solves a sequence of convex subproblems \eqref{subprob}. %that can be efficientlysolved by convex algorithms. 
In particular, note that $g_{k}(x)$ %=\lambda\norm x_{1}-h(x^{k-1})-\inprod{\nabla h(x^{k-1})}{x-x^{k-1}}$,
%then $g_{k}(x)$
majorizes $g(x)$: 
$
g_{k}(x)  \ge g(x),\
g_{k}(x^{k-1})  = g(x^{k-1}).
$
implying that $\{g_k(x) \le \eta_k\}$ is a convex subset of the original problem. %Moreover, the simple structure of this constraint set allows us to employ efficient projection-based first order methods to solve problem \eqref{subprob}.
%A fast $\Ocal(d\log d)$  algorithm for projection onto the this feasible domain will be given in Section \ref{sec:projection}. 
It can also be observed that adding a proximal term in the objective yields $\psi_{k}$ strongly convex for large enough $\gamma>0$. In the current form, Algorithm \ref{alg:main} requires a feasible solution of \eqref{subprob} and requirement of sequence $\{\eta_k\}$ is left unspecified. %In the following, we make these requirements more precise. 
%But before stepping into implementation details, we first explore the convergence property of Algorithm \ref{alg:main}.
%We address that the convergence of Algorithm \ref{alg:main} can be ensured even when thesubproblem is solved inexactly, which will be the key to adapt the first order methods as workhorses with provable efficiency guarantee.
We first
make the following assumptions.
\begin{assumption}
	[Strict feasibility]\label{assu:feasibility-level} There exist sequence $\{\eta_{k}\}_{k \ge 0}$ satisfying:\\ %In Problem~\eqref{noncvx-constraint}, 
	1. $\eta_{0}<\eta$ and a point $\hat{x}$ of such that
	$g(\hat{x})<\eta_{0}.$\\
	2. The sequence $\{\eta_{k}\}$ is monotonically increasing and converges
	to $\eta$: $\lim_{k\raw\infty}\eta_{k}=\eta$.
\end{assumption}%\vspace{-0.5em}
In light of Assumption \ref{assu:feasibility-level}, starting from
a strictly feasible point $x^{0}$, Algorithm \ref{alg:main} solves
subproblems \eqref{subprob} with gradually relaxed constraint levels.
This allows us to assert that each subproblem is strictly feasible\footnote{For specific examples of $g$, we show that origin is always the most feasible (and strictly feasible) solution of each subproblem and hence, does not require the predefined level-routine of LCPP to assert strict feasibility of subproblem. However, in order to keep generality of discussion, we perform the analysis under the level-setting.}. Indeed, we have $g_k(x^k) \le \eta_k \Rightarrow g_{k+1}(x^k) = g(x^k) \le g_k(x^k) \le \eta_k < \eta_{k+1}$. This implies the existence of KKT solution for each subproblem. A formal statement can be found in the appendix. Moreover, all the proofs of our technical results can be found in the appendix and we just make statements in the main article henceforth. %to always initiate the next subproblem withstrict feasible point, guaranteeing the existence of KKT solution foreach subproblem. This is shown in the following proposition.
\iffalse
Below, we state a general sufficient decrease property when the subproblem is solved inexactly (and stochastically) and make the requirement on feasible solution $x^k$ of subproblem \eqref{subprob} more precise. This condition will form the basis of our convergence analysis.

%%%%%% Commented %%%%%%%%%%%%%%%%%
%\begin{prop}
%\label{thm:square-sum-1}Let $\bar{x}^k$ be the optimal solution of problem \eqref{subprob} and $\{\vep_{k}\}$ be a sequence such that
%\begin{equation}
%\psi_{k}(x^{k})-\psi_{k}(\bar{x}^{k})\le\varepsilon_{k},\quad\vep_{k}>0\quad k=1,2,...\label{eq:abs-error}
%\end{equation}
%%\highlight{Then  $\tfrac{\gamma-\mu}{2}\norm{\bar{x}^{k}-x^{k-1}}^{2}+\tfrac{\gamma}{2}\norm{x^{k}-x^{k-1}}^{2}\le\psi(x^{k-1})-\psi(x^{k})+\varepsilon_{k}.$}\\\added{Comment: Is this required in the statement?}
%and $\tsum_{k=0}^{\infty}\vep_{k}<\infty$,
%then the sequence $\{\psi(x^k)\}$ is bounded, and $\lim_{k\raw\infty}\norm{x^{k-1}-\bar{x}^{k}}= lim_{k\raw\infty}\norm{x^{k-1}-x^{k}}=0$.
%\end{prop}
%
%\begin{proof}
%Due to the strong convexity of $\psi_{k}(x)$, we have 
%\begin{align}
%\psi_{k}(\bar{x}^{k}) & \le\psi_{k}(x^{k-1})-\tfrac{\gamma-\mu}{2}\norm{\bar{x}^{k}-x^{k-1}}^{2}.\label{eq:mid2-optimality}
%\end{align}
%Putting \eqref{eq:abs-error} and \eqref{eq:mid2-optimality}  together,
%we have
%\begin{align}
%&\tfrac{\gamma-\mu}{2}\norm{\bar{x}^{k}-x^{k-1}}^{2} \le\psi_{k}(x^{k-1})-\psi_{k}(x^{k})+\varepsilon_{k}\nonumber\\
% &\quad =\psi(x^{k-1})-\psi(x^{k})-\tfrac{\gamma}{2}\norm{x^{k}-x^{k-1}}^{2}+\varepsilon_{k}. \label{middle-sum-bound}
%\end{align}
%Summing up the relation \eqref{middle-sum-bound} over $k=1,2,\dots,$ we have
%\begin{align*}
%&\psi(x^{K})+\tsum_{k=1}^{K}\tfrac{\gamma-\mu}{2}\norm{\bar{x}^{k}-x^{k-1}}^{2}+\tfrac{\gamma}{2}\norm{x^{k}-x^{k-1}}^{2}\\
%&\le\psi(x^{0})+\tsum_{k=1}^{K}\varepsilon_{k}.
%\end{align*}
%If we assume that $\tsum_k^\infty \vep_k<\infty$, it is easy to see $\psi(x^K)$ is bounded above.
%Moreover, in view of Proposition \ref{prop:KKT-subseq}, we have $x^K$ is feasible solution of $K+1$-th subproblem. So we obtain $g_{K+1}(x^k) = g(x^K) < \eta_{K+1} < \eta$ implying that $x^K$ is a feasible solution of the original problem. Then, we have $\psi(x^K) \ge \psi (x^*)$ and
%\begin{align*}
%&\tsum_{k=1}^{K}\tfrac{\gamma-\mu}{2}\norm{\bar{x}^{k}-x^{k-1}}^{2}+\tfrac{\gamma}{2}\norm{x^{k}-x^{k-1}}^{2}\\
%&\le\psi(x^{0})-\psi(x^{*})+\tsum_{k=1}^{K}\varepsilon_{k}.
%\end{align*}
%It then follows that $\tsum_{k=1}^{\infty}\norm{x^{k-1}-\bar{x}^{k}}^{2}<\infty$
%and hence $\lim_{k\raw\infty}\norm{x^{k-1}-\bar{x}^{k}}=0$.
%Similarly, we conclude $\lim_{k\raw\infty}\norm{x^{k-1}-x^{k}}=0$.
%\end{proof}
%%%%%% Commented %%%%%%%%%%%%%%%%%

\begin{prop}
%\label{prop:square-sum-2}
Let $\pi_k$ denotes the randomness of $x^1,x^2,...,x^{k-1}$. Assume that there exists a  $\rho\in[0, \gamma-\mu]$  and a summable nonnegative sequence $\vep_k $ ($\vep_k\ge 0,\, \tsum_{k=1}^{\infty}\vep_{k}<\infty$) such that
\begin{equation} %\label{inexactness}
\Ebb\left[\psi_{k}(x^{k})-\psi_{k}(\bar{x}^{k})|\pi_k\right]\le \tfrac{\rho}{2}\norm{\bar{x}^k-x^{k-1}}^2 + \vep_{k}
\end{equation}
Then, under Assumption \ref{assu:feasibility-level}, we have \\
	1. The sequence $\Ebb[\psi(x^k)]$ is bounded;\\
	2. $\lim_{k\raw\infty}\psi(x^{k})$ exists a.s.;\\
	3. $\lim_{k\raw\infty}\norm{x^{k-1}-\bar{x}^{k}}=0$ a.s.;\\
	4. If the whole algorithm is deterministic then $\psi(x^k)$ is bounded. Moreover, if $\vep_k=0$, then the sequence $\psi(x^k)$ is monotonically decreasing and convergent.
\end{prop}
\fi
%In order for establishing convergence to KKT conditions, 

{\bf Asymptotic convergence of LCPP method and boundedness of the optimal dual}

Our next goal is to establish asymptotic convergence of Algorithm
\ref{alg:main} to the KKT points. To this end, we require a
uniform boundedness assumption on the Lagrange multipliers. First, we prove asymptotic convergence under this assumption then we justify it under MFCQ. %We also precisely characterize in what cases MFCQ fails and prove explicit bounds on Lagrange multipliers.
Before stating the convergence results, we make the following boundedness assumption.

\iffalse
%\begin{lem}
%Suppose $\psi(x)$ is weakly convex with parameter $\mu$. Let $\bar{x}^{k}$
%and $\bar{y}^{k}$ be a KKT point of the $k$-th sub-problem. Then
%for any $x\in\Xcal$, we have
%\begin{equation}
%\psi(x)+\frac{\gamma}{2}\|x-x^{k-1}\|^{2}+\bar{y}^{k}\left(g_{k}(x)-\eta_{k}\right)\ge\psi(\bar{x}^{k})+\frac{\gamma}{2}\|\bar{x}^{k}-x^{k-1}\|^{2}+\frac{\gamma-\mu}{2}\norm{x-\bar{x}^{k}}_{2}^{2}.\label{eq:key-recursion}
%\end{equation}
%\end{lem}
%
%\begin{proof}
%Apply the optimality condition to $\min_{x}\Lcal(x,y)=\phi_{k}(x)+y\left(g_{k}(x)-\eta_{k}\right)$,
%the result immediately follows.
%\end{proof}
\fi

\begin{assumption}
[Boundedness of dual variables]\label{assu:bound-y-1}There exists
$B>0$ such that $\sup_{k}\bar{y}^{k}<B$ a.s.
\end{assumption}
For the deterministic case, we remove the measurablity part in the above assumption and assert that $\sup_{k} \bar{y}^k < B$. The following asymptotic convergence theorem is in order.
\begin{thm}[Convergence to KKT]\label{thm:conv_asym_KKT}
 Let $\pi_k$ denotes the randomness of $x^1,x^2,...,x^{k-1}$. Assume that there exists a  $\rho \in[0, \gamma-\mu]$  and a summable nonnegative sequence $\zeta_k$  %($\zeta_k\ge 0,\, \tsum_{k=1}^{\infty}\zeta_{k}<\infty$) 
such that
\begin{equation} \label{inexactness}
\Ebb\bracket{\psi_{k}(x^{k})-\psi_{k}(\bar{x}^{k})|\pi_k}\le \tfrac{\rho}{2}\gnorm{\bar{x}^k-x^{k-1}}{}{2} + \zeta_{k}.
\end{equation}
Then, under Assumption \ref{assu:feasibility-level} and \ref{assu:bound-y-1} %and the assumption of Proposition \ref{prop:square-sum-2},
for any limit point $\wtil x$ of the proposed algorithm, there exists
a dual variable $\wtil y$ such that $(\wtil x,\wtil y)$ satisfies
KKT condition, almost surely. 
\end{thm}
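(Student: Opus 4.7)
The plan is to combine three ingredients: feasibility of every iterate (which comes for free from the strict-feasibility / level-raising construction), a Lyapunov recursion on $\psi(x^{k})$ that drives $\norm{\bar x^{k}-x^{k-1}}$ to zero, and a passage to the limit in the KKT conditions of each convex subproblem using the uniform dual bound from Assumption \ref{assu:bound-y-1}.

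First, I would record feasibility of any limit point. By construction $g(x^{k})\le g_{k}(x^{k})\le\eta_{k}<\eta$ for every $k$, so continuity of $g$ and $\eta_{k}\to\eta$ yield $g(\wt x)\le\eta$ along any subsequence $x^{k_{j}}\to\wt x$. Next I would build the Lyapunov recursion. Because $x^{k-1}$ is feasible for subproblem $k$ (indeed $g_{k}(x^{k-1})=g(x^{k-1})\le\eta_{k-1}<\eta_{k}$), optimality of $\bar x^{k}$ and strong convexity of $\psi_{k}$ with modulus $\gamma-\mu$ give $\psi_{k}(\bar x^{k})\le\psi_{k}(x^{k-1})-\tfrac{\gamma-\mu}{2}\norm{\bar x^{k}-x^{k-1}}^{2}$. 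Combining with \eqref{inexactness} and using $\psi_{k}(x^{k-1})=\psi(x^{k-1})$ and $\psi_{k}(x^{k})\ge\psi(x^{k})$, I obtain in conditional expectation
$$\Ebb\bracket{\psi(x^{k})\,|\,\pi_{k}}\le\psi(x^{k-1})-\tfrac{\gamma-\mu-\rho}{2}\norm{\bar x^{k}-x^{k-1}}^{2}+\zeta_{k}.$$
Since $\psi$ is bounded below on the feasible set and $\sum_{k}\zeta_{k}<\infty$, a Robbins--Siegmund supermartingale argument gives, on an almost-sure event $\Omega_{0}$, that $\lim_{k}\psi(x^{k})$ exists and $\sum_{k}\norm{\bar x^{k}-x^{k-1}}^{2}<\infty$. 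In particular $\norm{\bar x^{k}-x^{k-1}}\to 0$; feeding this back into \eqref{inexactness} and using strong convexity of $\psi_{k}$ (i.e. $\tfrac{\gamma-\mu}{2}\norm{x^{k}-\bar x^{k}}^{2}\le\psi_{k}(x^{k})-\psi_{k}(\bar x^{k})$) yields $\norm{x^{k}-\bar x^{k}}\to 0$, whence $\bar x^{k}$ and $x^{k-1}$ share the limit points of $x^{k}$.

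Now I would exploit the KKT conditions of each subproblem. Each subproblem is strongly convex with a convex constraint and admits the strictly feasible point $x^{k-1}$ (Slater's condition). Hence there exists $\bar y^{k}\ge 0$ with
$$0\in\partial\psi(\bar x^{k})+\gamma(\bar x^{k}-x^{k-1})+\bar y^{k}\bigl(\lambda\partial\norm{\bar x^{k}}_{1}-\nabla h(x^{k-1})\bigr),\qquad\bar y^{k}\bigl(g_{k}(\bar x^{k})-\eta_{k}\bigr)=0.$$
Fix $\omega\in\Omega_{0}$ and a subsequence $x^{k_{j}}\to\wt x$; by the previous step $\bar x^{k_{j}}\to\wt x$ and $x^{k_{j}-1}\to\wt x$. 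Assumption \ref{assu:bound-y-1} lets me extract a further subsequence along which $\bar y^{k_{j}}\to\wt y\in[0,B]$. Outer semicontinuity of the limiting subdifferential of $\psi$ and of the convex subdifferential of $\norm{\cdot}_{1}$, together with continuity of $\nabla h$ and the vanishing of the proximal term $\gamma(\bar x^{k_{j}}-x^{k_{j}-1})$, let me pass to the limit in the stationarity inclusion to obtain $0\in\partial\psi(\wt x)+\wt y\bigl(\lambda\partial\norm{\wt x}_{1}-\nabla h(\wt x)\bigr)=\partial_{x}\Lcal(\wt x,\wt y)$. For complementary slackness, continuity of $h$ and $\nabla h$ together with $x^{k-1}\to\wt x$ and $\bar x^{k_{j}}\to\wt x$ give $g_{k_{j}}(\bar x^{k_{j}})\to g(\wt x)$ and $\eta_{k_{j}}\to\eta$, so the exact complementary slackness at each iterate passes to $\wt y\bigl(g(\wt x)-\eta\bigr)=0$. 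Combined with feasibility of $\wt x$ and $\wt y\ge0$, this is exactly the KKT condition \eqref{eq:stoc-approx-kkt} with $\vep=\delta=0$.

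The main obstacle I expect is not any individual algebraic step but the almost-sure control required by the stochastic, nonsmooth setting: the inexactness bound \eqref{inexactness} is only in conditional expectation, whereas the conclusion of the theorem is pathwise. Getting from one to the other requires a Robbins--Siegmund-type supermartingale convergence argument to produce the full-measure event on which $\norm{\bar x^{k}-x^{k-1}}\to 0$ and $\psi(x^{k})$ converges, after which the deterministic argument above (subsequence extraction, outer semicontinuity of limiting subdifferentials, and passage to the limit in complementary slackness) can be applied $\omega$-by-$\omega$. Everything else amounts to bookkeeping around the telescoped Lyapunov inequality and the Slater-type KKT conditions of each convex subproblem.
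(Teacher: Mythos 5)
Your proposal is correct and follows essentially the same route as the paper: a supermartingale (Robbins--Siegmund) argument on the Lyapunov inequality $\Ebb[\psi(x^{k})\mid\pi_{k}]\le\psi(x^{k-1})-\tfrac{\gamma-\mu-\rho}{2}\norm{\bar x^{k}-x^{k-1}}^{2}+\zeta_{k}$ to get $\norm{\bar x^{k}-x^{k-1}}\to0$ a.s.\ (the paper's Proposition \ref{prop:square-sum-2}), followed by extraction of a convergent subsequence of the bounded duals and passage to the limit in the subproblem KKT system, including complementary slackness via $\eta_{k}\to\eta$. The only (immaterial) difference is in the final limit passage: the paper takes limits in the global optimality inequality of the strongly convex subproblem and then invokes first-order optimality of the limiting minimization, whereas you take limits directly in the stationarity inclusion via outer semicontinuity of the subdifferentials, which is equally valid here since $\psi+\tfrac{\mu}{2}\norm{\cdot}^{2}$ is convex.
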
\vspace{-0.5em}
%Proof of the above theorem is provided in Appendix \ref{apx:proof_asym_conv}.
This theorem shows that any limit point of Algorithm \ref{alg:main} converges to a KKT point. However, it makes the %crucial 
assumption that dual is bounded. Since the optimal dual depends on the convex subproblems \eqref{subprob} which are generated dynamically in the algorithm, %This is crucial, in particular when subproblems are solved approximately or the problem itself is stochastic. 
it is important to justify Assumption \ref{assu:bound-y-1}. %General rule of thumb is that the Lagrange multipliers become large as the feasible domain shrinks so asserting a bounded Lagrange multiplier implies that domain remain large enough and subproblems are easy to solve. Rest of this section is solely dedicated to answering the crucial question of bounds on $\wb{y}^k$.
\iffalse
%\begin{rem}[I suggest we skip this part, we don't want to go to the existence issue of stochastic part]
%It should be pointed out that such limit point exists. Let $\wtil x$
%be a limit point of $x^{k-1}$. Such a point exists since $\{\psi(x^k)\}$ is bounded above and hence, in view of Assumption\ref{ass:general}, part 1, we have $x^k$ remains bounded. Then, passing to a subsequence if necessary, we obtain existence of a limit point $\wtil{x}$ of sequence $\{x^k\}$. In view of Proposition \ref{thm:square-sum-1}, it is also a limit point of $\bar{x}^{k}$. %Passing to a subsequence if necessary, we may assume $\lim_{k\raw\infty}x^{k-1}=\wtil x$. 
%Feasibility immediately follows by taking limits $k\raw\infty$ of $g(x^{k-1})=g_{k}(x^{k-1})\le\eta_{k}$.
%\end{rem}
\fi
%Finally, we show that Assumption \ref{assu:bound-y-1} is satisfied under some mild regularity condition.
To this end, we show that Assumption \ref{assu:bound-y-1} is satisfied under a well-known constraint qualification.\deleted{ when Algorithm \ref{alg:main} is deterministic, i.e., \eqref{inexactness} is satisfied deterministically.}
\iffalse
\begin{thm}\label{thm:bound_dual_MFCQ}[Boundedness condition]
Suppose that assumptions of Proposition \ref{prop:square-sum-2} are satisfied deterministically and all limit points of Algorithm \ref{alg:main} satisfy MFCQ condition in Assumption \ref{ass:mfcq}. Then, $\bar{y}^{k}$ is bounded. 
\end{thm}
\comment{Following theorem makes limit point assumption and gets rid of deterministic condition.}
\fi
\begin{thm}[Boundedness condition]
	\label{thm:bound_dual_MFCQ_new}
	Suppose Assumption \eqref{assu:feasibility-level} and relation \eqref{inexactness} are satisfied and all limit points of Algorithm \ref{alg:main} \added{exists a.s.,} and satisfy the MFCQ condition. Then, $\wb{y}^k$ is bounded a.s.
\end{thm}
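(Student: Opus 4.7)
I would argue by contradiction: suppose that on an event of positive probability the sequence $\{\bar y^k\}$ is unbounded, and work pathwise on a realization in this event. Extract a subsequence $\{k_j\}$ along which $\bar y^{k_j}\to\infty$. Since $\{x^k\}$ has limit points a.s. by hypothesis and $\|\bar x^k-x^{k-1}\|\to 0$ a.s. (a consequence of relation \eqref{inexactness} together with Assumption \ref{assu:feasibility-level}, i.e.\ the sufficient-decrease machinery underlying Theorem \ref{thm:conv_asym_KKT}), passing to a further sub-subsequence I may assume that both $\bar x^{k_j}$ and $x^{k_j-1}$ converge to a common limit point $\tilde x$. For $j$ large enough $\bar y^{k_j}>0$, so complementary slackness in the convex subproblem \eqref{subprob} gives $g_{k_j}(\bar x^{k_j})=\eta_{k_j}$.

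Next I would write down the KKT conditions of the (strongly) convex subproblem at $\bar x^{k_j}$: there exist $u^{k_j}\in\partial\psi(\bar x^{k_j})$ and $v^{k_j}\in\partial g_{k_j}(\bar x^{k_j})$ with
\begin{equation*}
u^{k_j}+\gamma(\bar x^{k_j}-x^{k_j-1})+\bar y^{k_j}\,v^{k_j}=0.
\end{equation*}
Let $z$ be the MFCQ direction at $\tilde x$ (Assumption \ref{ass:mfcq}), so that $\max_{v\in\partial g(\tilde x)}v^{T}z=:-\alpha<0$. Taking the inner product of the above identity with $z$ and solving for the multiplier,
\begin{equation*}
\bar y^{k_j}=-\frac{(u^{k_j})^{T}z+\gamma(\bar x^{k_j}-x^{k_j-1})^{T}z}{(v^{k_j})^{T}z}.
\end{equation*}
It therefore suffices to show the numerator is bounded while the denominator is eventually bounded away from $0$; this will contradict $\bar y^{k_j}\to\infty$.

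For the denominator, the explicit structure $v^{k_j}=\lambda s^{k_j}-\nabla h(x^{k_j-1})$ with $s^{k_j}\in\partial\|\bar x^{k_j}\|_{1}$ does the work: the sign-vectors satisfy $\|s^{k_j}\|_{\infty}\le 1$ so, up to a further subsequence, $s^{k_j}\to s^{\star}$, and by outer semicontinuity of the convex subdifferential of $\|\cdot\|_{1}$ one has $s^{\star}\in\partial\|\tilde x\|_{1}$; combined with continuity of $\nabla h$ this gives $v^{k_j}\to v^{\star}=\lambda s^{\star}-\nabla h(\tilde x)\in\partial g(\tilde x)$, whence $(v^{k_j})^{T}z\to(v^{\star})^{T}z\le -\alpha$. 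For the numerator, $\|\bar x^{k_j}-x^{k_j-1}\|\to 0$ kills the proximal contribution, and $u^{k_j}$ stays bounded because $\psi$ (being weakly convex by \eqref{eq:low_curv_psi}) has locally bounded subgradients along the convergent sequence $\bar x^{k_j}\to\tilde x$. This yields $\limsup_{j}\bar y^{k_j}<\infty$, the desired contradiction.

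The main obstacle I foresee is that $\partial g_{k_j}$ is not a single set-valued map but changes with $k_j$ through the anchor $x^{k_j-1}$, so off-the-shelf outer semicontinuity does not apply directly. This is precisely why one needs the simple additive structure of $g_k$: writing $\partial g_k(\cdot)=\lambda\,\partial\|\cdot\|_{1}-\nabla h(x^{k-1})$ reduces the question to outer semicontinuity of $\partial\|\cdot\|_{1}$ and continuity of $\nabla h$, both of which behave well under the joint convergence $\bar x^{k_j},x^{k_j-1}\to\tilde x$. All statements hold pathwise on the hypothesized a.s.\ event where limit points exist and satisfy MFCQ, which then upgrades the conclusion to almost sure boundedness of $\{\bar y^k\}$.
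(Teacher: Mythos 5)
Your argument is correct, and it reaches the contradiction by a genuinely different mechanism than the paper. The paper also assumes an unbounded subsequence $\bar y^{i_k}\to\infty$, but instead of working with the stationarity \emph{equation}, it divides the global optimality \emph{inequality} $\psi_{i_k}(x)+\bar y^{i_k}g_{i_k}(x)\ge\psi_{i_k}(\bar x^{i_k})+\bar y^{i_k}g_{i_k}(\bar x^{i_k})$ (valid for all $x$) by $\bar y^{i_k}$, lets $k\to\infty$ so the $\psi$-terms vanish, and concludes that $\tilde x$ minimizes $x\mapsto\lambda\|x\|_1-\langle\nabla h(\tilde x),x\rangle$, i.e.\ $\mathbf{0}\in\partial g(\tilde x)$; this directly contradicts MFCQ because $z^{T}\mathbf{0}=0$ cannot be negative. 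That route needs only continuity of $h$ and $\nabla h$ and the fact that $\psi_{i_k}(\cdot)/\bar y^{i_k}\to 0$, and in particular it never has to control subgradients of $\psi$ or invoke outer semicontinuity of $\partial\|\cdot\|_1$. Your route --- pairing the stationarity identity with the MFCQ direction $z$ and solving for the multiplier --- is the classical nonlinear-programming argument; it requires the extra ingredients you correctly identify (local boundedness of $\partial\psi$, which holds here by weak convexity, and closedness of the graph of $\partial\|\cdot\|_1$), but in exchange it yields a quantitative bound $\bar y^{k_j}\le\frac{2}{\alpha}\bigl(\|u^{k_j}\|\,\|z\|+\gamma\|\bar x^{k_j}-x^{k_j-1}\|\,\|z\|\bigr)$ rather than a bare contradiction, which is in the spirit of the explicit dual bounds the paper develops separately in its appendix. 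One small point to make explicit: before invoking MFCQ at $\tilde x$ you should pass $g_{k_j}(\bar x^{k_j})=\eta_{k_j}$ to the limit (using $\bar x^{k_j},x^{k_j-1}\to\tilde x$, continuity of $h$ and $\nabla h$, and $\eta_{k_j}\to\eta$) to verify that the constraint is actually active at $\tilde x$, since the MFCQ hypothesis only provides the direction $z$ at active points; this is immediate from the convergences you already established, so it is an expositional omission rather than a gap.
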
\vspace{-0.5em}
%Proof of Theorem \ref{thm:bound_dual_MFCQ} is provided in Appendix \ref{apx:bound_dual_MFCQ}.
This theorem shows the existence of dual under the MFCQ assumption for all limit points of Algorithm \ref{alg:main}. MFCQ is a mild constraint qualification frequently used in the existing literature \cite{RN27}. %In view of Theorem \ref{thm:bound_dual_MFCQ_new}, we have that the dual variable is bounded. 
In certain cases, we also provide explicit bounds on the dual variables using the fact that origin is most feasible solution to the subproblem. These bounds quantify how ``closely" the MFCQ assumption is violated and provides explicitly the effect on the magnitude of the optimal dual. Additional results and discussion in this regard are deferred to the Appendix \ref{apx:bound_dual_overall}. For the purpose of this article, we assume that the dual variables remain bounded henceforth.

{\bf Complexity of LCPP method \label{sec:Complexity}}

Our goal here is to analyze the complexity of the proposed algorithm. Apart from the negative lower curvature guarantee \eqref{eq:low_curv_psi} of the objective function, we impose that $h$ has Lipschitz continuous gradients,
%\begin{equation}\label{eq:lip_smooth_h}
$\norm{\grad h(x) - \grad h(y)} \le L_h\norm{x-y}.$ This is satisfied by all functions in Table \ref{tab:constraint_fun_examples}. % \end{equation}
\iffalse
In order to analyze complexity, we need to define some optimality measures.
We say that $x$ is a stochastic $(\vep,\delta)$-approximate KKT
point if there exists a pair of random points ($\bar{x},\bar{y}$)
($\bar{y}\ge0$ and $g(\bar{x})\le\eta$) such that 
\begin{equation}
\begin{aligned}\Ebb\left|\bar{y}\left[g(\bar{x})-\eta\right]\right| & \le\vep\\
\Ebb\left[\dis\left(\partial_{x}\Lcal(\bar{x},\bar{y}),0\right)\right]^{2} & \le\vep\\
\Ebb\,\|x-\bar{x}\|^{2} & \le\delta
\end{aligned}
%\label{eq:stoc-approx-kkt}
\end{equation}
%Moreover, we say that $x$ is $(\vep, \delta)$-approximate KKT point if all of $\bar{x},\bar{y}$ and $x$ are deterministic.
\fi
Now we discuss a general convergence result of LCPP method for original nonconvex problem \eqref{noncvx-constraint}. 
\iffalse
In particular, we first state a framework of convergence that the solver for the subproblem \eqref{subprob} must satisfy. This framework includes deterministic as well as stochastic cases. For this framework of subproblem convergence, we provide complexity of reaching an $(\vep,\delta)$-KKT point (as in \eqref{eq:stoc-approx-kkt}) of problem \eqref{noncvx-constraint}. We then specialize this general result for different cases. In particular, for each case, we show an existing algorithm in the literature that can solve the subproblem in our framework of convergence and then compute total number of subproblem solver iterations as the final complexity for obtaining $(\vep, \delta)$-KKT point of problem \eqref{noncvx-constraint}. Before proceeding to the formal statement, we make the following assumption.
\fi
\begin{thm}
	\label{thm:complexity_main}
	Suppose Assumption \ref{assu:feasibility-level} and \ref{assu:bound-y-1}  hold such that $\delta_{k} = \tfrac{\eta-\eta_{0}}{k(k+1)}$ for all $k \ge 1$. Let $x^k$ satisfy \eqref{inexactness} where $\rho \in [0, \gamma-\mu)$ and $\{\zeta_{k}\}$ is a summable nonnegative sequence. Moreover, $x^k$ is a feasible solution of the $k$-th subproblem, i.e.,
	\begin{equation}
	g_k(x^k) \le \eta_{k}. \label{eq:subprob_feas}
	\end{equation}
	If $\hat{k}$ is chosen uniformly at random from $\left\lfloor \tfrac{K+1}{2} \right\rfloor$ to K then there exists a pair $(\wb{x}^{\hat{k}}, \wb{y}^{\wh{k}})$ satisfying
	\begin{align*}
	\Ebb\bracket{ \dis\paran{\partial_{x}\Lcal(\bar{x}^{\hat{k}},\bar{y}^{\hat{k}}),0}^{2} } &\le \tfrac{16(\gamma^2 + B^2L_h^2)}{K(\gamma-\mu-\rho)}\big(\tfrac{\gamma-\mu + \rho}{2(\gamma-\mu)}\Delta^0 + Z\big),\\
	\Ebb\bracket{\bar{y}^{\hat{k}}\abs{g(\bar{x}^{\hat{k}})-\eta } } &\le \tfrac{2BL_h}{K(\gamma-\mu-\rho)}\big(\tfrac{\gamma-\mu + \rho}{\gamma-\mu}\Delta^0 + 2Z\big) + \tfrac{2B(\eta-\eta_{0})}{K},\\
	\Ebb\bracket{\gnorm{x^{\hat{k}}-\bar{x}^{\hat{k}}}{}{2}}&\le \tfrac{4\rho(\gamma-\mu+\rho)}{K(\gamma-\mu)^2(\gamma-\mu-\rho)}\Delta^0 + \tfrac{8Z}{K(\gamma-\mu-\rho)},
	\end{align*}
	where, $\Delta^0 := \psi(x^0) - \psi(x^*)$, $Z := \tsum_{k=1}^K \zeta_{k}$ and expectation is taken over the randomness of $\wh{k}$ and solutions $x^k$, $k = 1 ,\dots, K$.
\end{thm}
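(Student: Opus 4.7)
The plan is to reduce every target bound to the single quantity $\tsum_{k=1}^{K}\Ebb\|\bar x^k - x^{k-1}\|^2$, control that sum by telescoping a per-iteration descent inequality, and extract expected $O(1/K)$ bounds by averaging over the uniformly chosen index $\hat k$. The three ingredients are a descent inequality driven by strong convexity of $\psi_k$, an identification of an element of $\partial_x\Lcal(\bar x^k,\bar y^k)$ read off from the subproblem's KKT system, and a comparison between $g$ and its majorant $g_k$ that exploits the telescoping choice $\delta_k = (\eta-\eta_0)/(k(k+1))$.

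For the descent step, by \eqref{eq:low_curv_psi} the subproblem objective $\psi_k$ is $(\gamma-\mu)$-strongly convex on the convex feasible set $\{g_k\le\eta_k\}$. The previous iterate is feasible for the $k$-th subproblem, since $g_k(x^{k-1})=g(x^{k-1})\le g_{k-1}(x^{k-1})\le\eta_{k-1}\le\eta_k$, so strong convexity at the constrained minimizer yields $\psi(x^{k-1})=\psi_k(x^{k-1})\ge\psi_k(\bar x^k)+\tfrac{\gamma-\mu}{2}\|\bar x^k-x^{k-1}\|^2$. Combining with \eqref{inexactness} and the identity $\psi_k(x^k)=\psi(x^k)+\tfrac{\gamma}{2}\|x^k-x^{k-1}\|^2$ gives the conditional descent
\begin{equation*}
\Ebb[\psi(x^{k-1})-\psi(x^k)\,|\,\pi_k]\ge \tfrac{\gamma-\mu-\rho}{2}\|\bar x^k-x^{k-1}\|^2 + \tfrac{\gamma}{2}\Ebb[\|x^k-x^{k-1}\|^2\,|\,\pi_k] - \zeta_k.
\end{equation*}
Taking total expectation, telescoping over $k=1,\dots,K$, and using $\psi(x^K)\ge\psi(x^*)$ gives $\tsum_k\Ebb\|\bar x^k-x^{k-1}\|^2\le 2(\Delta^0+Z)/(\gamma-\mu-\rho)$. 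Since $\hat k$ is uniform on $\{\lfloor(K+1)/2\rfloor,\dots,K\}$, whose cardinality is at least $(K+1)/2$, the expected value $\Ebb\|\bar x^{\hat k}-x^{\hat k-1}\|^2$ inherits an $O(1/K)$ bound that will drive the other two estimates.

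The KKT residual is then read off algebraically. Strict feasibility of $x^{k-1}$ for the $k$-th subproblem (Slater) provides $\bar y^k\ge 0$ and subgradients $v^k\in\partial\psi(\bar x^k)$, $w^k\in\lambda\partial\|\bar x^k\|_1$ with $\bar y^k(g_k(\bar x^k)-\eta_k)=0$ and $v^k+\gamma(\bar x^k-x^{k-1})+\bar y^k(w^k-\nabla h(x^{k-1}))=0$. Because $\partial g(\bar x^k)=\lambda\partial\|\bar x^k\|_1-\nabla h(\bar x^k)$ differs from $\partial g_k(\bar x^k)$ only in the evaluation point of $\nabla h$, I obtain $v^k+\bar y^k(w^k-\nabla h(\bar x^k))\in\partial_x\Lcal(\bar x^k,\bar y^k)$ equal to $-\gamma(\bar x^k-x^{k-1})+\bar y^k(\nabla h(x^{k-1})-\nabla h(\bar x^k))$. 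Lipschitzness of $\nabla h$ and the dual bound $\bar y^k\le B$ from Assumption \ref{assu:bound-y-1} give $\dis(\partial_x\Lcal(\bar x^k,\bar y^k),0)^2\le 2(\gamma^2+B^2L_h^2)\|\bar x^k-x^{k-1}\|^2$, and inserting the averaged sum bound yields the first inequality. For complementary slackness, note $\bar x^k$ is feasible for the original problem since $g(\bar x^k)\le g_k(\bar x^k)\le\eta_k<\eta$, so using $\bar y^k(g_k(\bar x^k)-\eta_k)=0$ I split
\begin{equation*}
\bar y^k\bigl|g(\bar x^k)-\eta\bigr| = \bar y^k(\eta-\eta_k)+\bar y^k\bigl(g_k(\bar x^k)-g(\bar x^k)\bigr).
\end{equation*}
The telescoping identity $\tsum_{j=1}^k 1/(j(j+1))=1-1/(k+1)$ gives $\eta-\eta_k=(\eta-\eta_0)/(k+1)\le 2(\eta-\eta_0)/K$ for $k\ge\lfloor(K+1)/2\rfloor$, bounding the first term by $2B(\eta-\eta_0)/K$. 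The second is controlled by the descent lemma for the $C^{1,1}_{L_h}$ convex function $h$: $g_k(\bar x^k)-g(\bar x^k)=h(\bar x^k)-h(x^{k-1})-\nabla h(x^{k-1})^\top(\bar x^k-x^{k-1})\le \tfrac{L_h}{2}\|\bar x^k-x^{k-1}\|^2$, so the averaged sum bound again closes the estimate. The third inequality is direct from strong convexity and \eqref{inexactness}: $\tfrac{\gamma-\mu}{2}\|x^k-\bar x^k\|^2\le \tfrac{\rho}{2}\|\bar x^k-x^{k-1}\|^2+\zeta_k$, followed by the same $\hat k$-averaging.

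The main obstacle is recovering the sharper coefficient $(\gamma-\mu+\rho)/(2(\gamma-\mu))$ that the theorem places on $\Delta^0$, since the naive telescoping I sketched produces a coefficient of $1$ there. The refinement requires recycling the $\tfrac{\gamma}{2}\|x^k-x^{k-1}\|^2$ term retained in the descent inequality, typically through a Young-type split $\|\bar x^k-x^{k-1}\|^2\le (1+\alpha)\|x^k-x^{k-1}\|^2+(1+1/\alpha)\|x^k-\bar x^k\|^2$ combined with the strong-convexity bound on $\|x^k-\bar x^k\|^2$, tuning $\alpha$ as a function of $\rho/(\gamma-\mu)$ so that the resulting self-bounding inequality for $\|\bar x^k-x^{k-1}\|^2$ absorbs itself cleanly. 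Apart from this constant-chasing, the rest of the argument is bookkeeping, all enabled by the reduction in the first step.
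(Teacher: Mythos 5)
Your argument is correct and follows the same strategy as the paper's: reduce all three estimates to $\tsum_{k}\Ebb\norm{\bar x^k-x^{k-1}}^2$, control that sum by a strong-convexity descent recursion, read the KKT residual off the subproblem's optimality system with the swap $\nabla h(x^{k-1})\to\nabla h(\bar x^k)$ paid for by $L_h$, and split complementary slackness into the majorization gap plus the telescoped $\eta-\eta_k$ term. The only organizational difference is that the paper (Proposition \ref{prop:composite-2}) telescopes only over the tail $k=s,\dots,K$ with a shifted-index recursion, which leaves boundary terms $\psi(x^{s-2})-\psi(x^*)$ and $\norm{x^{s-2}-\bar x^{s-1}}^2$ at $s=\lfloor(K+1)/2\rfloor$ that must be bounded by a separate descent estimate $\Ebb[\psi(x^{s-2})]\le\psi(x^0)+\tsum\zeta_k$; your full-range telescoping followed by discarding the nonnegative terms with $k<s$ avoids this bookkeeping entirely.

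The ``main obstacle'' you flag in your last paragraph does not exist: you have the inequality backwards. Since $\rho\ge 0$, the theorem's coefficient satisfies $\tfrac{\gamma-\mu+\rho}{2(\gamma-\mu)}\ge\tfrac12$, so the stated bound $\tfrac{16(\gamma^2+B^2L_h^2)}{K(\gamma-\mu-\rho)}\big(\tfrac{\gamma-\mu+\rho}{2(\gamma-\mu)}\Delta^0+Z\big)$ equals $\tfrac{8(\gamma^2+B^2L_h^2)}{K(\gamma-\mu-\rho)}\big(\tfrac{\gamma-\mu+\rho}{\gamma-\mu}\Delta^0+2Z\big)$, which is \emph{weaker} (larger) than the $\tfrac{8(\gamma^2+B^2L_h^2)}{K(\gamma-\mu-\rho)}(\Delta^0+Z)$ your telescoping delivers; the same comparison holds term by term for the other two estimates (e.g.\ for the third one, $\tfrac{4\rho}{(\gamma-\mu)(\gamma-\mu-\rho)}+\tfrac{4}{\gamma-\mu}=\tfrac{4}{\gamma-\mu-\rho}\le\tfrac{8}{\gamma-\mu-\rho}$). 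The factor $\tfrac{\gamma-\mu+\rho}{\gamma-\mu}$ in the theorem is an artifact of the paper's tail-sum boundary terms, not a sharper constant you need to recover, so the Young-type split you propose is unnecessary and your proof is already complete.
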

Note that Theorem \ref{thm:complexity_main} assumes that subproblem \eqref{subprob} can be solved according to the framework of \eqref{inexactness} and \eqref{eq:subprob_feas}. When the subproblem solver is deterministic then we ignore the expectation in \eqref{inexactness}. It is easy to see from the above theorem that for $x^{\hat{k}}$ to be an $\vep$-KKT point, we must have $K = O(1/\vep)$ and $\zeta_{k}$ must be small enough such that $Z$ is bounded above by a constant. The complexity analysis of different cases now boils down to understanding the number of iterations of the subproblem solver needed in order to satisfy these requirements on $\rho$ and $\{\zeta_{k}\}$ (or $Z$).

In the rest of this section, we provide a unified complexity result for solving subproblem \eqref{subprob} in Algorithm \ref{alg:main} such that criteria in \eqref{inexactness} and \eqref{eq:subprob_feas} are satisfied for various settings of the objective $\psi(x)$.

{\bf Unified method for solving subproblem \eqref{subprob}}\hspace{1em}
Here we provide a unified complexity analysis for solving subproblem \eqref{subprob}. In particular, consider the form of the objective
$\psi(x)=\Ebb_\xi \bracket{\Psi(x,\xi)},$
 where $\xi$ is the random input of $\Psi(x,\xi)$ and $\psi(x)$ satisfies the following property:
 \[\psi(x) - \psi(y) - \inprod{\psi'(y)}{x-y} \le \tfrac{L}{2}\norm{x-y}^2 + M \norm{x-y}. \]
 Note that, when $M = 0$, function $\psi$ is Lipschitz smooth whereas when $L = 0$, it is nonsmooth. Due to the possible stochastic nature of $\Psi$, negative lower curvature in \eqref{eq:low_curv_psi} and the combined smoothness and nonsmoothness property above, we have that $\psi$ can be either smooth or nonsmooth, deterministic or stochastic and convex ($\mu = 0$) or nonconvex ($\mu > 0$). We also assume bounded second moment stochastic oracle for $\psi'$ when $\psi$ is a stochastic function: For any $x$, we have an oracle whose output, $\Psi'(x,\xi)$, satisfies $\Ebb_\xi[\Psi'(x,\xi)] = \psi'(x)$ and $\Ebb\bracket{\gnorm{\Psi'(x,\xi) -\psi'(x)}{}{2}} \le \sigma^2$.
 
For such a function, we consider an accelerated stochastic approximation algorithm (AC-SA) proposed in \cite{ghadimi12} for solving the subproblem \eqref{subprob} which can be reformulated as $\min_x \psi_k(x) +\Ibf_{\{g_k(x) \le \eta_k\}}(x),$
where $\Ibf$ is the indicator set function. AC-SA algorithm can be applied when $\gamma \ge \mu$. In particular, $\psi_k(x) := \psi(x) + \tfrac{\gamma}{2} \norm{x-x^{k-1}}^2$ is $(\gamma-\mu)$-strongly convex and $(L+\gamma)$-Lipschitz smooth.
%Also, $\Ibf_{\{g_k(x) \le \eta_k\}}(x)$ is the simple convex function, then AC-SA ca can be applied even if only stochastic gradient information of $\psi_k(x)$ is available. 
Moreover, AC-SA requires computation of a single prox operation of the following form in each iteration:
\begin{equation} \label{eq:prox}
\argmin_x w^Tx + \norm{x-\wb{x}}^2+ \Ibf_{\{g_k(x) \le \eta_k\}}(x),
\end{equation}
for any $w, \wb{x} \in \Rbb^d$. We show an efficient method for solving this problem at the end of in this section. % \ref{sec:projection} that this operation can be computed efficiently. 
For now, we look at convergence properties of the AC-SA:
\begin{prop}\cite{ghadimi12}
	\label{prop:AC-SA}
	Let $x^k$ be the output of AC-SA algorithm after running $T_k$ iterations for the subproblem \eqref{subprob}. Then $g_k(x^k) \le \eta_k$ and $\Ebb[\psi_{k}(x^k) - \psi_k(\wb{x}^k)] \le \tfrac{2(L+\gamma)}{T_k^2}\norm{x^{k-1}-\wb{x}^k}^2 + \tfrac{8(M^2+\sigma^2)}{(\gamma-\mu)T_k}$
\end{prop}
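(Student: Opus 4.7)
The plan is to verify that the subproblem \eqref{subprob} fits exactly into the composite optimization template analyzed by Ghadimi and Lan \cite{ghadimi12}, and then invoke their convergence theorem. First I would rewrite the subproblem as
\[
\min_x\; \psi_k(x) + \Ibf_{\{g_k(x) \le \eta_k\}}(x),
\]
and observe that the feasible set $\{g_k(x) \le \eta_k\}$ is convex (since $g_k$ is a convex function by construction), so the indicator is a proper closed convex regularizer. This puts the problem into the strongly convex composite stochastic optimization framework on a convex feasible set, which is the setting that AC-SA is designed for.

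Next I would check each structural constant. For strong convexity: the lower curvature hypothesis \eqref{eq:low_curv_psi} on $\psi$ combined with adding $\tfrac{\gamma}{2}\|x-x^{k-1}\|^2$ gives that $\psi_k$ is $(\gamma-\mu)$-strongly convex, and the assumption $\gamma \ge \mu$ keeps this modulus nonnegative. For the smooth/nonsmooth split: writing $\psi'$-based upper bound $\psi(x) - \psi(y) - \inprod{\psi'(y)}{x-y} \le \tfrac{L}{2}\|x-y\|^2 + M\|x-y\|$, the function $\psi_k$ inherits an $(L+\gamma)$-Lipschitz smooth component (absorbing the quadratic proximal term into the smooth part) together with an $M$-bounded subgradient component. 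The stochastic oracle inherits the bounded variance $\sigma^2$ of $\Psi'(x,\xi)$, since the proximal term is a known quadratic and does not introduce any noise. Thus all the constants $L+\gamma$, $\gamma-\mu$, $M$, and $\sigma$ appearing in the stated bound match the ones controlling the AC-SA convergence rate.

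Feasibility of the returned iterate is the other claim, and it follows from the algorithmic structure rather than the rate analysis. Each AC-SA iteration performs the prox step \eqref{eq:prox}, which projects the updated iterate onto the convex set $\{g_k(x) \le \eta_k\}$. Since every prox output lies in the feasible set and the returned $x^k$ is a convex combination of such outputs (or one of them directly, depending on the variant), $x^k$ stays in $\{g_k(x) \le \eta_k\}$ and hence satisfies $g_k(x^k) \le \eta_k$. This is important because it lets us later invoke criterion \eqref{eq:subprob_feas} in Theorem \ref{thm:complexity_main} without an additional infeasibility term.

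Finally, I would apply the strongly convex composite AC-SA convergence theorem from \cite{ghadimi12} verbatim, with initial point $x^{k-1}$ and unique minimizer $\wb{x}^k$ of the $(\gamma-\mu)$-strongly convex problem. That theorem yields an expected suboptimality bound of the form
\[
\Ebb[\psi_{k}(x^k) - \psi_k(\wb{x}^k)] \le \tfrac{c_1(L+\gamma)}{T_k^2}\norm{x^{k-1}-\wb{x}^k}^2 + \tfrac{c_2(M^2+\sigma^2)}{(\gamma-\mu)T_k},
\]
with absolute constants $c_1=2$ and $c_2=8$ after tracking the tuned stepsizes. The main obstacle, if any, is purely bookkeeping: matching the specific constants $2$ and $8$ to the tuned parameter choices in \cite{ghadimi12} for the strongly convex case, and confirming that the upper-bound constant $M^2+\sigma^2$ (rather than $M^2$ and $\sigma^2$ separately) results from combining the nonsmooth and stochastic variance contributions into a single $\tfrac{1}{T_k}$-rate term; neither requires any new analysis beyond citing the appropriate corollary.
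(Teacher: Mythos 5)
Your proposal is correct and matches the paper's treatment: the paper offers no independent proof of Proposition \ref{prop:AC-SA}, but simply reformulates the subproblem as $\min_x \psi_k(x) + \Ibf_{\{g_k(x)\le\eta_k\}}(x)$, notes that $\psi_k$ is $(\gamma-\mu)$-strongly convex and $(L+\gamma)$-smooth with nonsmoothness/variance parameters $M$ and $\sigma$, and cites the strongly convex composite AC-SA rate from \cite{ghadimi12}, with feasibility of $x^k$ following from the fact that every prox step \eqref{eq:prox} outputs a point in the convex set $\{g_k(x)\le\eta_k\}$. Your verification of the template, the constants, and the feasibility argument is exactly the reasoning the paper relies on implicitly.
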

Note that convergence result in Proposition \ref{prop:AC-SA} closely follows the requirement in \eqref{inexactness}.
In particular, we should ensure that $T_k$ is big enough such that $\tfrac{\rho}{2} \le \tfrac{2(L+\gamma)}{T_k^2}$ and $\zeta_{k} = \tfrac{8(M^2+\sigma^2)}{(\gamma-\mu)T_k}$ sum to a constant. Consequently, we have the following corollary:
\begin{corollary}\label{cor:unified_complexity_main}
	Let $\psi$ be nonconvex such that it satisfies \eqref{eq:low_curv_psi} with $\mu > 0$. Set $\gamma = 3\mu$ and run AC-SA for $T_k = \max\{ 2\paran[\big]{\tfrac{L}{\mu}+3}^{1/2}, K(M+\sigma)\}$ iterations where $K$ is total iterations of Algorithm \ref{alg:main}. Then, we obtain that $x^{\hat{k}}$ is an $(\vep_1, \vep_2)$-KKT point of \eqref{noncvx-constraint}, where $\hat{k}$ is chosen according to Theorem \ref{thm:complexity_main} and
	\begin{align*}
	\vep_1 &= \big( \tfrac{3\Delta^0}{2K} + \tfrac{8(M+\sigma)}{\mu K} \big) \max\big\{\tfrac{8(9\mu^2+ B^2L_h^2)}{\mu}, \tfrac{2BL_h}{\mu}\}+ \tfrac{2B(\eta-\eta_{0})}{K},\quad	\vep_2 = \tfrac{3\Delta^0}{\mu K} + \tfrac{32(M+\sigma)}{\mu^2 K}
	\end{align*}
\end{corollary}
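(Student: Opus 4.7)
The plan is to reduce the corollary to a routine substitution into Theorem \ref{thm:complexity_main} once the AC-SA guarantee in Proposition \ref{prop:AC-SA} is shown to match the inexactness framework \eqref{inexactness} together with the feasibility condition \eqref{eq:subprob_feas}. With $\gamma=3\mu$, the strong-convexity margin in Theorem \ref{thm:complexity_main} becomes $\gamma-\mu=2\mu$, so I am free to pick any $\rho\in[0,2\mu)$; the natural choice is $\rho=\mu$, which yields $\gamma-\mu-\rho=\mu$ and $\gamma-\mu+\rho=3\mu$, the exact constants that produce the stated $\vep_1,\vep_2$.

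First I would verify the two required subproblem properties. Feasibility \eqref{eq:subprob_feas} is immediate from Proposition \ref{prop:AC-SA}, which guarantees $g_k(x^k)\le\eta_k$. For the inexactness condition \eqref{inexactness}, Proposition \ref{prop:AC-SA} gives
\begin{equation*}
\Ebb[\psi_k(x^k)-\psi_k(\bar{x}^k)]\le \tfrac{2(L+\gamma)}{T_k^2}\norm{x^{k-1}-\bar{x}^k}^2 + \tfrac{8(M^2+\sigma^2)}{(\gamma-\mu)T_k}.
\end{equation*}
To absorb the first term into $\tfrac{\rho}{2}\norm{x^{k-1}-\bar{x}^k}^2$ with $\rho=\mu$, I need $T_k^2\ge \tfrac{4(L+\gamma)}{\mu}=4(\tfrac{L}{\mu}+3)$, i.e.\ $T_k\ge 2(\tfrac{L}{\mu}+3)^{1/2}$, which is the first branch of the stated $T_k$. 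The residual defines $\zeta_k:=\tfrac{8(M^2+\sigma^2)}{(\gamma-\mu)T_k}=\tfrac{4(M^2+\sigma^2)}{\mu T_k}$.

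Next I would bound $Z=\sum_{k=1}^K\zeta_k$. The second branch $T_k\ge K(M+\sigma)$ combined with $M^2+\sigma^2\le (M+\sigma)^2$ gives
\begin{equation*}
\zeta_k \le \tfrac{4(M+\sigma)^2}{\mu\, K(M+\sigma)}=\tfrac{4(M+\sigma)}{\mu K},\qquad Z\le \tfrac{4(M+\sigma)}{\mu}.
\end{equation*}
Thus $\{\zeta_k\}$ is summable and the hypotheses of Theorem \ref{thm:complexity_main} are met.

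Finally I would substitute $\gamma=3\mu$, $\rho=\mu$, and the bound on $Z$ into the three conclusions of Theorem \ref{thm:complexity_main}. A direct calculation gives $\tfrac{16(\gamma^2+B^2L_h^2)}{K(\gamma-\mu-\rho)}=\tfrac{16(9\mu^2+B^2L_h^2)}{K\mu}$ and $\tfrac{\gamma-\mu+\rho}{2(\gamma-\mu)}=\tfrac{3}{4}$, so the stationarity bound becomes $\tfrac{8(9\mu^2+B^2L_h^2)}{\mu}\big(\tfrac{3\Delta^0}{2K}+\tfrac{8(M+\sigma)}{\mu K}\big)$; analogously the complementarity bound becomes $\tfrac{2BL_h}{\mu}\big(\tfrac{3\Delta^0}{2K}+\tfrac{8(M+\sigma)}{\mu K}\big)+\tfrac{2B(\eta-\eta_0)}{K}$. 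Taking the maximum of the two leading coefficients and packaging the two bounds into a single $\vep_1$ yields the stated expression. The proximity bound uses $\tfrac{4\rho(\gamma-\mu+\rho)}{(\gamma-\mu)^2(\gamma-\mu-\rho)}=\tfrac{12\mu^2}{4\mu^3}=\tfrac{3}{\mu}$ and $\tfrac{8}{\gamma-\mu-\rho}=\tfrac{8}{\mu}$, giving $\vep_2=\tfrac{3\Delta^0}{\mu K}+\tfrac{32(M+\sigma)}{\mu^2 K}$. There is no substantive obstacle here; the only delicate point is the choice of $\rho$ and the algebraic matching of $T_k$'s two branches to the two requirements (per-iteration contraction and summability of $Z$), which is exactly what the $\max$ in the definition of $T_k$ encodes.
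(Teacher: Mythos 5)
Your proposal is correct and follows essentially the same route as the paper: choose $\rho=\mu$, match the two branches of $T_k$ to the contraction requirement $\tfrac{2(L+\gamma)}{T_k^2}\le\tfrac{\rho}{2}$ and to the bound $Z\le\tfrac{4(M+\sigma)}{\mu}$, then substitute into Theorem \ref{thm:complexity_main}. Your explicit final arithmetic (and the step $M^2+\sigma^2\le(M+\sigma)^2$, which the paper's one-line bound $\zeta_k\le\tfrac{4}{\mu K}$ appears to state with a dropped $(M+\sigma)$ factor) checks out against the stated $\vep_1,\vep_2$.
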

 Note that Corollary \ref{cor:unified_complexity_main} gives a unified complexity for obtaining KKT point of \eqref{noncvx-constraint} in various settings of nonconvex objective $(\mu > 0)$. First, in order to get an $\vep$-KKT point, $K$ must be of $O(1/\vep)$. If the problem is deterministic and smooth then $M = \sigma = 0$. In this case, $T_k = 2\paran{\tfrac{L}{\mu} + 3}^{1/2}$ is a constant. Hence, the total iteration count is $\tsum_{k = 1}^K T_k = O(K)$, implying that total iteration complexity for obtaining an $\vep$-KKT point is of $O(1/\vep)$.
 For nonsmooth or stochastic cases, $M$ or $\sigma$ is positive. Hence, $T_k = O(K(M+\sigma))$ implying the total iteration complexity $\tsum_{k=1}^K T_k = O(K^2)$, which is of $O(1/\vep^2)$. Similar result for the convex case is shown in the appendix.
% A similar result can be proved for convex case ($\mu = 0$). See appendix for the detailed statement as well as comments upon finite-sum case.

{\bf Efficient projection} \hspace{1em}We conclude this section by formally stating the theorem which provides an efficient oracle for solving the projection problem \eqref{eq:prox}. Since $g_k(x) = \lambda \gnorm{x}{1}{} + \inprod{v}{x}$, the linear form along with $\ell_1$ ball breaks the symmetry around origin which is used in existing results on (weighted) $\ell_1$-ball projection \cite{duchi2008efficient,kopsinis10onlinesparse}. Our method involves a careful analysis of Lagrangian duality equations to convert the problem into finding the root of a piecewise linear function. Then a line search method can be employed to find the solution in $O(d\log{d})$ time. % solving problem which can be solved efficiently. 
The formal statement is as follows: %Proof of this theorem can be found in appendix \ref{apx:prox_oracle}
\begin{thm}\label{thm:prox_oracle}
	There exists an algorithm that runs in $O(d\log{d})$-time and solves the following problem exactly:
	\begin{equation}\label{projection}
	\min_{x \in \Rbb^d }\, \tfrac{1}{2}\norm{x-v}^2 \ {\text{subject to}}\ \norm{x}_1+\inprod{u}{x}\le \tau.
	\end{equation}
\end{thm}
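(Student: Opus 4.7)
The plan is to reduce this projection to a one-dimensional root-finding problem via Lagrangian duality and then exploit piecewise linearity. First I would check whether the unconstrained minimizer $x = v$ is feasible; if $\|v\|_1 + \langle u, v\rangle \le \tau$, return $v$ in $O(d)$ time. Otherwise the constraint is active at the optimum and, since the objective is strongly convex and the feasible set is convex with $x = 0$ strictly feasible when $\tau > 0$, Slater's condition gives strong duality. Form the Lagrangian
\[
L(x,\mu) = \tfrac{1}{2}\|x - v\|^2 + \mu\bigl(\|x\|_1 + \langle u, x\rangle - \tau\bigr), \qquad \mu \ge 0,
\]
which is separable across coordinates. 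For each $i$, the inner problem
\[
x_i(\mu) = \arg\min_{t \in \Rbb} \tfrac{1}{2}(t - v_i)^2 + \mu |t| + \mu u_i t
\]
is a shifted soft-thresholding with a closed-form solution that is piecewise linear in $\mu$: writing out the optimality condition, $x_i(\mu)$ passes through a positive regime $v_i - \mu(1 + u_i)$, a zero regime on the interval $\mu(u_i-1) \le v_i \le \mu(u_i+1)$, and a negative regime $v_i - \mu(u_i - 1)$, with at most two breakpoints in $[0,\infty)$, namely $v_i/(u_i+1)$ and $v_i/(u_i-1)$ when they are positive.

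Define the residual $\phi(\mu) := \|x(\mu)\|_1 + \langle u, x(\mu)\rangle - \tau$. By the envelope theorem $\phi(\mu) = p'(\mu)$, where $p(\mu) = \min_x L(x,\mu)$ is concave (being a pointwise minimum of affine functions of $\mu$), so $\phi$ is non-increasing and continuous. From the explicit form of each $x_i(\mu)$ the function $\phi$ is piecewise linear with at most $2d$ knots, inherited as the union of the coordinate breakpoints. The assumed infeasibility of $v$ gives $\phi(0) > 0$, while in every coordinate $|x_i(\mu)| + u_i x_i(\mu)$ is non-increasing in $\mu$ and tends either to $0$ (when $|u_i| \le 1$) or to $-\infty$ (when $|u_i| > 1$), so $\phi(\mu)$ becomes $\le 0$ for $\mu$ sufficiently large. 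Hence a root $\mu^* \ge 0$ of $\phi$ exists, and by KKT it yields the projection via $x^\star = x(\mu^*)$.

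Finding $\mu^*$ is then a sweep over the sorted knots: (i) compute all $O(d)$ breakpoints in $O(d)$; (ii) sort them in $O(d \log d)$; (iii) walk through them in increasing order, maintaining the current slope and value of $\phi$ with $O(1)$ updates at each knot, since only one coordinate switches regime there; (iv) identify the unique interval across which $\phi$ changes sign and solve the single linear equation $\phi(\mu^*) = 0$ in $O(1)$; (v) assemble $x(\mu^*)$ in $O(d)$. The sort dominates, giving the claimed $O(d \log d)$ bound. The main obstacle will be the case analysis inside step (i): depending on the signs of $v_i$, $1 + u_i$, and $u_i - 1$, a coordinate starts at $\mu = 0$ in its positive, zero, or negative regime, and may have zero, one, or two breakpoints in $[0,\infty)$, with the additional degenerate cases $u_i = \pm 1$ and $1 + u_i = 0$ needing to be folded in. Once this tabulation is made uniform, the monotonicity of $\phi$ and the linear-per-piece structure make the remainder of the proof a direct verification.
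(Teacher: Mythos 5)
Your proposal is correct and follows essentially the same route as the paper's proof: a feasibility check on $v$, a KKT/Lagrangian reduction yielding the same coordinate-wise piecewise-linear formula $x_i(y)=[v_i-(u_i+1)y]_+-[(u_i-1)y-v_i]_+$, and a sort-then-sweep over the $O(d)$ breakpoints of the resulting piecewise-linear constraint residual to find the multiplier in $O(d\log d)$. Your explicit monotonicity argument for $\phi$ via concavity of the dual is a nice touch the paper leaves implicit, but it is not a different method.
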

In conclusion, note that \eqref{eq:prox} and \eqref{projection} are equivalent where $v$ in \eqref{projection} can be replaced by $\wb{x} + \tfrac{1}{2}w$ of \eqref{eq:prox} to get the equivalence of the objective functions of the two problems. 
\iffalse
%\subsection{Deterministic optimization}
%Throughout this subsection, we assume that $\psi(x)$ and $h$ are Lipschitz smooth functions. Moreover, $\psi(x)$ can be nonconvex with lower curvature bound $\mu$ while $h(x)$ is convex function as specified in Assumption \ref{ass:general}.
%\begin{align*}
%&\tfrac{L}{2}\norm{x-y}^2 \ge \psi(x) -  \psi(y) - \grad\psi(y)^T(x-y) \ge -\tfrac{\mu}{2}\norm{x-y}^2\\
%&\tfrac{L_h}{2}\norm{x-y}^2 \ge h(x) -  h(y) - \grad h(y)^T(x-y) \ge 0 
%\end{align*} 
%%$$\norm{\nabla \psi(x)-\nabla \psi(y)}\le L\norm{x-y}$$
%%Moreover, we assume that $h(x)$ is weakly convex with parameter $\mu$:
%%\begin{align*}
%%\psi(y) & \ge \psi(x)+\inprod{\nabla \psi(x)}{y-x}-\tfrac{\mu}{2}\norm{y-x}^{2}.
%%\end{align*}
%If $\mu=0$, then $\psi(x)$ is a convex function otherwise for $\mu > 0$, it is a nonconvex function.
%%It can be readily seen that, if $\psi(x)$ is $L$-smooth, then $\psi(x)$ is also $L$-weakly convex.
%
%
%\paragraph*{Solving the subproblem}
%\begin{algorithm}[H]
%	\caption{Nesterov's accelerated proximal gradient}
%	\label{alg:nag}
%	\begin{algorithmic}
%		\STATE {\bf Input:} $x^{0}$;
%		\FOR {$k=1$ {\bf to} $K$}
%		\STATE \highlight{Add pseudo code}\added{Comment: Do we need to add this?}
%		\ENDFOR
%	\end{algorithmic}
%\end{algorithm}
%To solve the subproblem of Algorithm \ref{alg:main}, we appy
%Nesterov's accelerated proximal gradient \cite{RN179} due to its
%generality and efficiency for solving convex composite optimization. 
%Specifically, consider the following convex problem
%\begin{equation}\label{prob-strongly-cvx}
%\min_{x}\,\wtil{\psi}(x)=\wtil f(x)+\wtil{\omega}(x)
%\end{equation}
%where $\wtil f(x)$ is convex, $\wtil L$-Lipschitz smooth and $\wtil{\omega}(x)$
%is $\wtil{\mu}$-strongly convex with simple structure for computing the proximal mapping.
%The following theorem due to \cite{RN179} presents the linear convergence of Algorithm for solving problem 
%\eqref{prob-strongly-cvx}.
%\begin{thm}
%After $k$ iterations of accelerated proximal gradient (APG), we have
%\[
%\wtil{\psi}(x^{k})-\wtil{\psi}(x^{*})\le\tfrac{\wtil L}{2}\left(1+\tfrac{1}{2}\sqrt{\tfrac{\wtil{\mu}}{\wtil L}}\right)^{-2(k-1)}\norm{x^{0}-x^{*}}^{2}
%\]
%where $x^{*}$ is the optimal solution.
%\end{thm}
%
%The main convergence result is developed in the following theorem.
%\begin{thm}
%[Approximate KKT]\label{thm:complexity-deter}Under Assumption \ref{assu:feasibility-level},
%\ref{assu:bound-y-1}, choose $\delta_{k}=\tfrac{\eta-\eta_{0}}{k(k+1)}$, $k=1,2,3,...$.
%Let $\hat{k}$ be chosen from $\left\lfloor \tfrac{K+1}{2}\right\rfloor $
%to $K$ uniformly at random. 
%\begin{enumerate}
%\item If $\mu>0$, then choose $\gamma=3\mu$, set $T_{k}\ge\left(\left(\tfrac{1}{2}+\sqrt{\tfrac{L+2\mu}{\mu}}\right)\log\max\left\{ \tfrac{L}{\mu}+2,\tfrac{L^{3}}{\mu^{3}}+\tfrac{2L^{2}}{\mu^{2}}\right\} \right)$,
%then $x^{\hat{k}}$ is a stochastic $(\epsilon_{1},\epsilon_{2})$-KKT
%point with
%\[
%\epsilon_{1}=\max\left\{ \tfrac{6\left(9\mu^{2}+B^{2}L_{h}^{2}\right)C}{\mu K},\tfrac{6CBL_{h}}{\mu K}+\tfrac{2B(\eta-\eta_{0})}{K}\right\} ,\quad\epsilon_{2}=\tfrac{3\mu C}{L^{2}K}
%\]
%\item If $\mu=0$, then choose $\gamma=\beta L$, for some $\beta\in(0,1]$,
%$T_{k}\ge\left(\tfrac{1}{2}+\beta^{-\tfrac{1}{2}}\right)\log2$, then
%$x^{\hat{k}}$ is a stochastic $(\epsilon_{1},\epsilon_{2})$-KKT
%point with
%\[
%\epsilon_{1}=\max\left\{ \tfrac{24\left(\beta^{2}L^{2}+B^{2}L_{h}^{2}\right)C}{\beta LK},\tfrac{6CBL_{h}}{\beta LK}+\tfrac{2B(\eta-\eta_{0})}{K}\right\} ,\epsilon_{2}=\tfrac{6C}{\beta LK}.
%\]
%\end{enumerate}
%In above, $C=\max_{1\le k\le K}\psi(x^{k})-\psi(x^{*})<\infty$.
%\end{thm}
%
%\begin{rem}
%It can be readily seen that to guarantee a stochastic $\left(\vep, \vep\right)$-KKT point, the total number of gradient computations is $T=\tsum_{k=1}^{K}T_{k}=\Ocal\left(K\right)$.
%\end{rem}
%
%
%[Will add  a theorem later]
%\subsection{Stochastic optimization}
%Our next goal is to consider stochastic optimization with sparsity constraint. Specifically, we assume that the objective takes the form 
%\begin{equation}\label{stoc-prob}
%\psi(x)=\Ebb_\xi \bracket{\Psi(x,\xi)}.	
%\end{equation}
% Here  $\Psi(x,\xi)$ is a convex function in $x$ with random input $\xi$.  In the rest of this subsection, we make the following additional assumption:
%\begin{assumption}
%Let $\nabla \Psi(x,\xi)\in \grad \Psi(x,\xi)$ denote the stochastic gradient. We assume $\Ebb \nabla \Psi(x,\xi)=\nabla \psi(x)$. Moreover, %there exist constants $\sigma$, $M$ such that  
%$ \Ebb \norm{\nabla \Psi(x,\xi)-\nabla \psi(x)}^2 \le \sigma^2. $
%\end{assumption}
%\paragraph{Solving the subproblem}
%We use the accelerate stochastic approximation (AC-SA) algorithm \cite{lan}, for solving the strongly convex subproblem \eqref{subprob}. Specifically, consider equivalent reformulation of \eqref{subprob}
%\[\min_x \psi(x) + \tfrac{\gamma}{2}\norm{x-x^{k-1}}^2 +\Ibf_{\{g_k(x) \le \eta_k\}}(x),\]
%where $\Ibf$ is the indicator set function. Then, one can apply AC-SA algorithm when $\gamma \ge \mu$. In particular, $\psi_k(x) := \psi(x) + \tfrac{\gamma}{2} \norm{x-x^{k-1}}^2$ is $(\gamma-\mu)$-strongly convex function and $\Ibf_{\{g_k(x) \le \eta_k\}}(x)$ is the simple convex function, then AC-SA can be applied for strongly convex composite optimization: $\min_{x\in\Rbb^d} {\psi_k}(x) + \Ibf_{\{g_k(x) \le \eta_k\}}(x)$ where only stochastic gradient information of $\psi_k(x)$ can be obtained. Moreover, AC-SA requires computation of a single proximal operation of the following form in each iteration:
%\[ \argmin_x v^Tx + \norm{x-\wb{x}}^2+ \Ibf_{\{g_k(x) \le \eta_k\}}(x),\]
%for any $v, \wb{x} \in \Rbb^d$. We show in Section \ref{sec:projection} that this operation can be computed efficiently. For now, we look at convergence properties of the AC-SA
%\begin{thm}\cite{lan}
%	Let $x^k$ be the output of AC-SA algorithm after running $T_k$ iterations for the subproblem \eqref{subprob}. Then $g_k(x^k) \le \eta_k$ and $\psi_{k}(x^k) - \psi_k(\wb{x}^k) \le \tfrac{2(L+\gamma)}{2T_k^2}\norm{x^{k-1}-\wb{x}^k}^2 + \tfrac{8\sigma^2}{(\gamma-\mu)T_k}$
%\end{thm}
%%where $F(x,\xi)$ is a stochastic convex function, $\wtil{\omega}(x)$ is strongly convex with parameter $\mu_{\wtil \omega}>0$.  
%%In view of definition \eqref{stoc-prob}, we have $\wtil{\omega}(\cdot)=\omega(\cdot)+\tfrac{\gamma}{2}\norm{\cdot-x}^2+\Ibf_C(\cdot)$ where $C$ is the feasible domain.
%\iffalse
%For stochastic strongly convex function, the optimal rate  can be achieved by using Nesterov accelerated stochastic gradient (\cite{RN99}) or stochastic gradient with incremental averaging (\cite{RN173}). 
%We present a version of proximal stochastic subgradient method in Algorithm \ref{alg:prox-sgd}. The main difference between the presented algorithm and the work \cite{RN173} is that we assume objective has composite form with a strongly convex regularizer $\wtil{\omega}(x)$. 
%\begin{algorithm}[H]
%	\begin{algorithmic}
%		\STATE {\bfseries Input:} $x^{0}$, $\{\alpha_{k}\}$, $\{\beta_{k}\}$;
%		\FOR {$k = 0$ {\bfseries to} $K-1$}
%		\STATE Obtain sample $\xi^{k}$ and compute $\nabla F(x^{k})\in\partial F(x^{k},\xi^{k})$;
%		\STATE Compute $x^{k+1}=\argmin_{x}\left\{ \inprod{\nabla F(x^{k})}{x-x^{k}}+\omega(x)+\tfrac{\beta_{k}}{2}\norm{x-x^{k}}^{2}\right\} $
%		\ENDFOR
%		\STATE {\bf return} $\wtil x^{K}=\left(\tsum_{k=0}^{K-1}\alpha_{k}\right)^{-1}\left(\tsum_{k=0}^{K-1}\alpha_{k}x^{k+1}\right)$ or random sample a point $x^{\hat{k}}$ from $\left\{1,2,...,K\right\} $
%		with $P(\hat{k}=k)\,\propto\,\alpha_{k-1}.$
%	\end{algorithmic}
%\caption{Proximal stochastic subgradient method \label{alg:prox-sgd}}
%\end{algorithm}
%The following result summarizes the complexity of Algorithm \ref{alg:prox-sgd}. The formal theorem and convergence analysis is left in Appendix \ref{sec:proof-lpp-sgd}.
%\begin{thm}[Brief] In Algorithm \ref{alg:prox-sgd}, if we set $\alpha_k=k+2$ and $\beta_k={\mu_{\wtil{\omega}}(k+1)}/{2}$,  the returned solution $\wtil{x}$ satisfies
%\[
%\Ebb\left[\psi(\wtil x)-\psi(x^{*})\right]\le\tfrac{\mu_{\wtil{\omega}}\norm{x^{0}-x^{*}}^{2}}{K(K+3)}+\tfrac{4(\sigma^{2}+4M^{2})}{\mu_{\wtil{\omega}}(K+3)}.
%\]	
%\end{thm}
%\fi
%The following theorem states the complexity result of level proximal point algorithm when subproblems is solved by proximal stochastic gradient. The proof is left in Appendix \ref{sec:proof-lpp-sgd}.
%\begin{thm}\label{thm:complexity-sgd}
%Under Assumption \ref{assu:feasibility-level},
%\ref{assu:bound-y-1},
%choose $\delta_{k}=\tfrac{\eta-\eta_{0}}{k(k+1)}$ and $T_k=K$ ($1\le k\le K$).
%Let $\hat{k}$ be chosen from $\left\lfloor \tfrac{K+1}{2}\right\rfloor $
%to $K$ uniformly at random. 
%Then $x^{\hat{k}}$ is a stochastic $(\epsilon_{1},\epsilon_{2})$-KKT
%point with
%\[
%\epsilon_{1}=
%\max\left\{ \tfrac{8\left(\gamma^{2}+B^{2}L_{h}^{2}\right)\left(3\gamma C+2D\right)}{\gamma^2 K},
%\tfrac{2BL_{h}(3\gamma C+2D)}{\gamma^2 K}+\tfrac{2B(\eta-\eta_{0})}{K}\right\},
%\tfrac{2BL_h(3\gamma C + 2D)}{\gamma^2K},
%\]
%where $D=4(\sigma^2+4M^2)$.
%\end{thm}
%\begin{rem}
%	In order to obtain $\Ocal(\vep,\vep)$-KKT point, it takes $\Ocal\left(\tfrac{1}{\vep^2}\right)$ iterations of stochastic subgradient. 
%\end{rem}
%
%[Should addd a comparison for the complexity]

\fi

%\input{sec_projection}

\section{Experiments\label{sec:Experiments}}
The goal of this section is to illustrate the empirical performance of LCPP. For simplicity, we will consider the following  learning problem: 
\[\min_x\, \psi(x)=\tfrac{1}{n}\tsum_{i = 1}^n L_i(x),\quad \text{s.t.}\quad g(x)\le \eta, \] where $L_i(x)$ denotes the loss function. Specifically, we consider logistic loss $L_i(x)=\log(1+\exp(-b_i a_i^Tx))$ for classification and squared loss $L_i(x)=(b_i-a_i^Tx)^2$ for regression. Here $(a_i, b_i)$ is the training sample, and $g(x)$ is the MCP penalty (see Table~\ref{tab:constraint_fun_examples}).  
Details of the testing datasets are summarized in Table \ref{tab:Datasets}. 
As we have stated, LCPP can be equipped with projected first order methods for fast iteration. We compare the efficiency of (spectral) gradient descent \cite{gong2013a}, Nesterov accelerated gradient and stochastic gradient \cite{xiao2014proximal} for solving LCPP subproblem. We find that spectral gradient outperforms the other methods in the logistic regression model and hence use it in LCPP for the remaining experiment for the sake of simplicity. Due to the space limit, we leave the discussion of this part in appendix.
The rest of the section will compare the optimization efficiency of LCPP with the state-of-the-art nonlinear programming solver, and compare the proposed sparse constrained models solved by LCPP with standard convex and nonconvex sparse regularized models. 
\begin{table}[h]
\caption{\label{tab:Datasets}Dataset description. R for regression and C for classification. \texttt{mnist} is formulated as a binary 
problem to classify digit $5$ from the other digits. \texttt{real-sim} is randomly partitioned into $70\%$ training data and $30\%$ testing data.}	
\centering{}%
\begin{tabular}{cccccc}
\toprule 
Datasets & Training size &Testing size &  Dimensionality & Nonzeros & Types \tabularnewline
\midrule
 \texttt{real-sim}  &  50347 & 21962 & 20958 & 0.25\%  & C \tabularnewline
 \midrule
 \texttt{rcv1.binary} &  20242 & 677399 &  47236 & 0.16\% & C \tabularnewline
 \midrule 
  \texttt{mnist} & 60000 & 10000 & 784  & 19.12\% & C \tabularnewline
  \midrule
  \texttt{gisette} & 6000 & 1000 & 5000 & 99.10\% & C  \tabularnewline
    \midrule
  \texttt{E2006-tfidf} & 16087 & 3308  & 150360 & 0.83\% & R  \tabularnewline
    \midrule
  \texttt{YearPredictionMSD} & 463,715 & 51,630  & 90 &  100\% & R  \tabularnewline
  \bottomrule
\end{tabular}
\end{table}\\
Our first experiment is to compare LCPP with existing optimization library for their optimization efficiency.   To the best of our knowledge, DCCP (\cite{shen2016disciplined})
is the only open-source package available for the proposed nonconvex constrained problem.
While the work~\cite{shen2016disciplined} has made its code available
online, we found that their
code had unresolved errors in parsing MCP functions. Therefore, we replicate their setup in our own implementation.
%They  proposed a disciplined convex-concave programming framework for a class of difference of convex programs in which sparsity inducing constraints are a special problem. 
DCCP converts the initial problem into a sequence of relatively easier convex problems amenable to CVX (\cite{diamond2016cvxpy}), a convex optimization interface that runs on top of popular optimization libraries. We choose DCCP with MOSEK as the backend as it consistently outperforms DCCP with the default open-source solver SCS.

%We run DCCP on MOSEK and SCS in parallel and report the performance whichever come first. 
%However, MOSEK implements an interior point method and may fail to converge if the generated linear system is too large. In such situation,  we let DCCP switch to SCS (\cite{ocpb:16}), a first order method for conic program that can  use iterative algorithms to bypass matrix factorization. 

\begin{figure}[h]
	\includegraphics[scale=0.22]{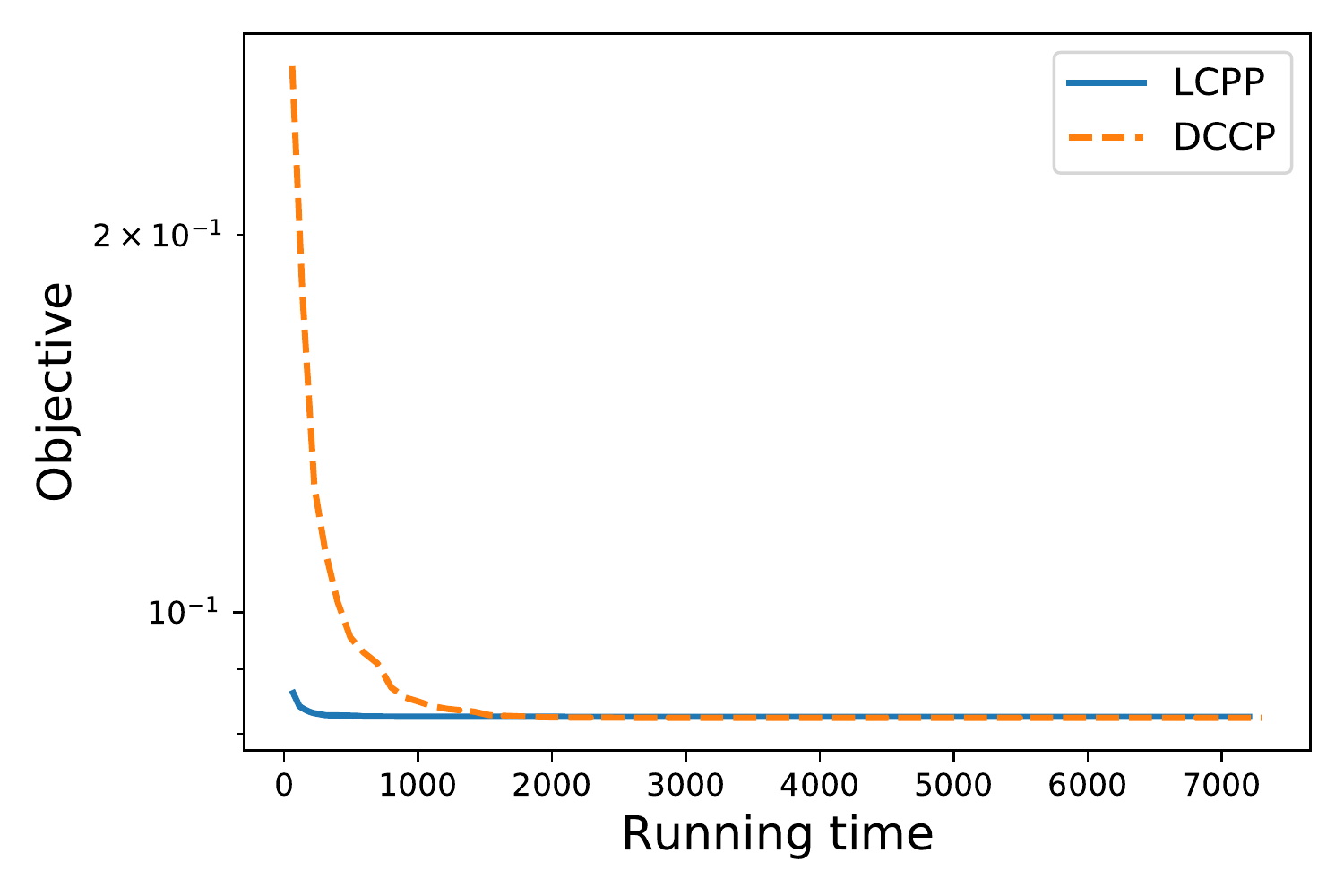}
	\includegraphics[scale=0.22]{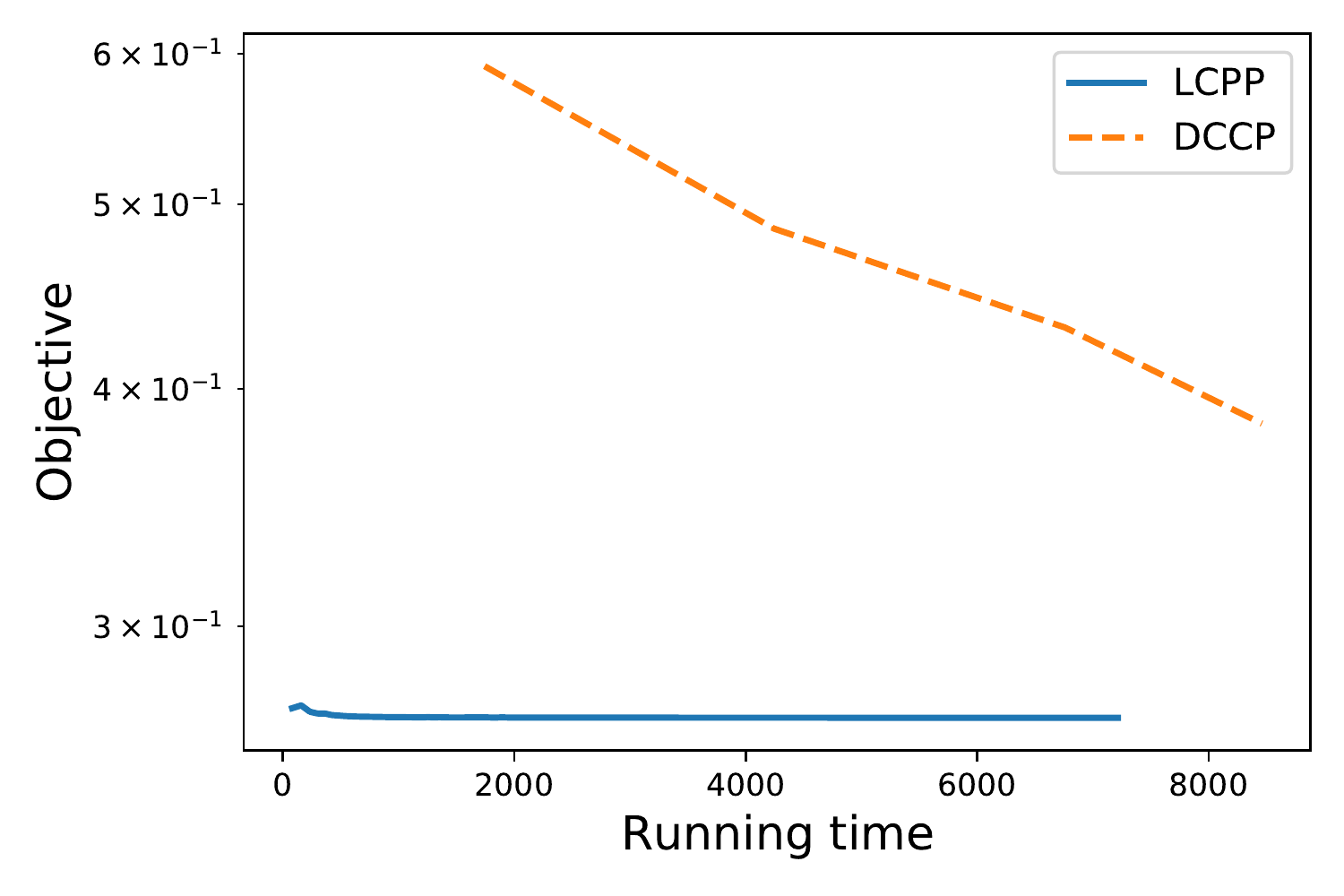}
	\includegraphics[scale=0.22]{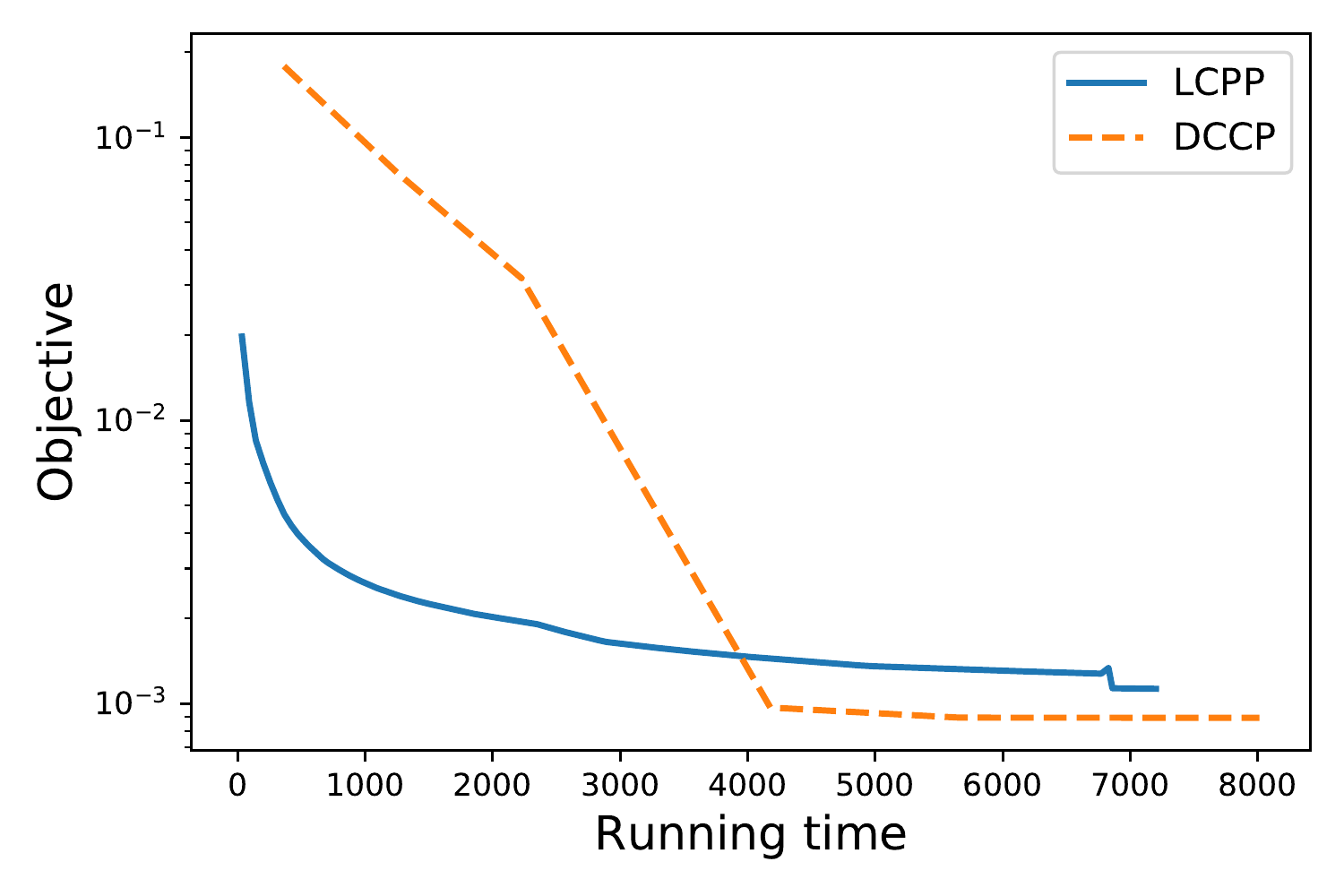}
	\includegraphics[scale=0.22]{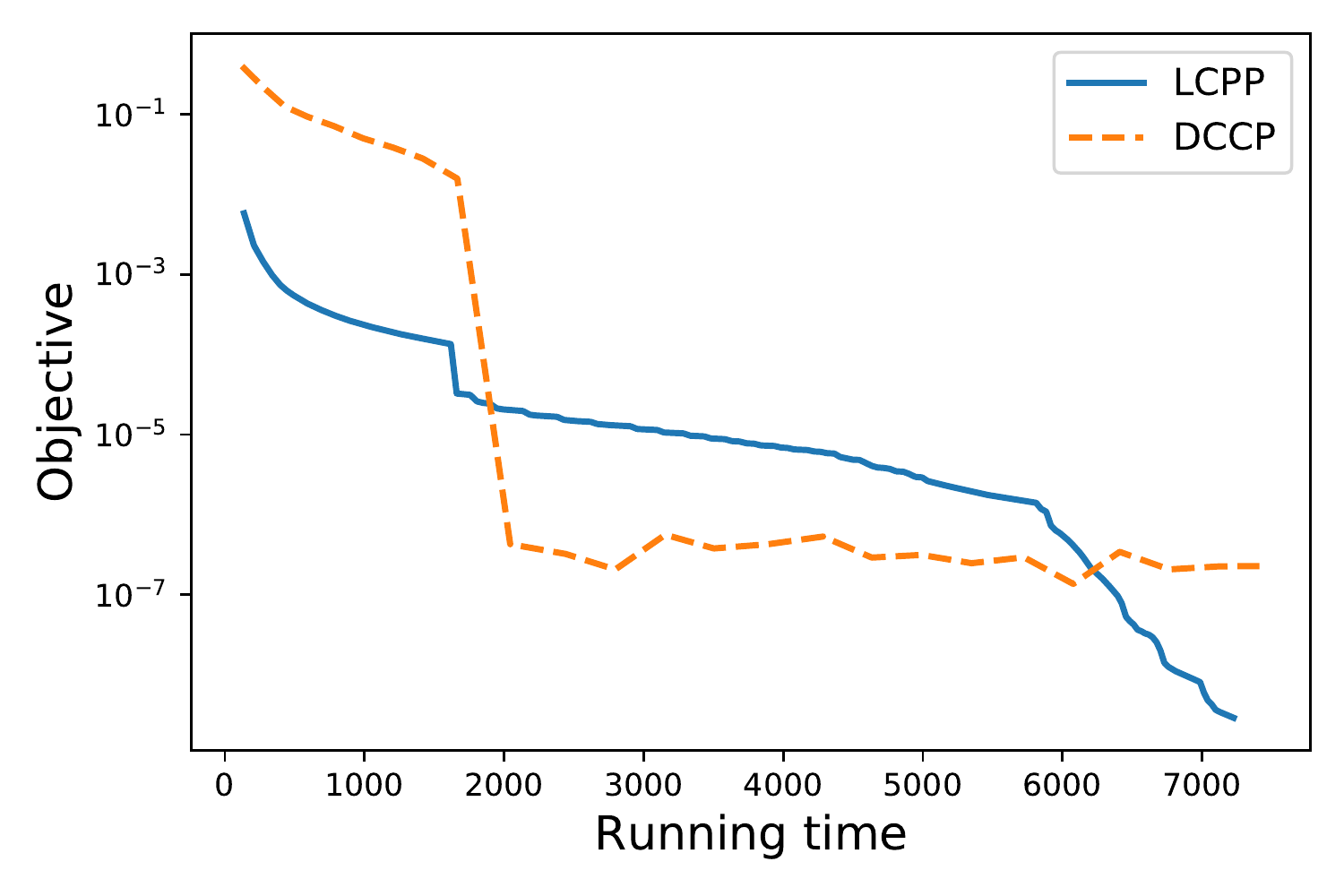}
	\caption{Objective value vs. running time (in seconds).  
		Left to right: \texttt{mnist} ($\eta=0.1d$), \texttt{real-sim} ($\eta=0.001d$),
		 \texttt{rcv1.binary} ($\eta=0.05d$) and \texttt{gisette} ($\eta=0.05d$). $d$ stands for the feature dimension. %  we set the constraint level by $\eta= \alpha d$ where $d$ is the feature dimension and $\alpha\in(0,0.5)$.  
	}
	\label{fig:cmp-dccp}
\end{figure}
Comparison is conducted on the classification problem. To fix the parameters, we choose $\gamma=10^{-5}$ for \texttt{gisette} dataset and $\gamma=10^{-4}$ for the other datasets. For each LCPP subproblem we run gradient descent  at most 10 iterations and break when the criterion $\Vert x^k-x^{k-1}\Vert/\Vert x^k\Vert\le \vep$ is met. We set the number of outer loops as 1000 to run LCPP sufficiently long. We set $\lambda=2,\theta=0.25$ in the MCP function. 
Figure~\ref{fig:cmp-dccp} plots the convergence performance of LCPP and DCCP, confirming that LCPP is more advantageous over DCCP. %on sparse and high-dimensional data. 
Specifically, LCPP outperforms DCCP,  sometimes reaching near-optimality even before DCCP finishes the first iteration.
This observation can be explained by the fact that LCPP leverages the strengthen of first order methods, for which we can derive efficient projection subroutine. 
In contrast, DCCP is not scalable to large dataset due to the inefficiency in dealing with large scale linear system arising from the interior point subproblems.
%since relies on interior point methods to solve the reformulated conic program, with the need to deal with large linear systems arising from the interior point subproblems.
%We also remark that,  as LCPP outperforms DCPP on large-scale data,  DCCP appears to achieve better performance on small and dense problems. See the result on \texttt{gisette} dataset in Figure~\ref{fig:cmp-dccp}. DCCP converges slower than LCPP at first, but it eventually reachs high precision due to the fast convergence of the underlying interior point method solver.

Our next experiment is to compare the performance of nonconvex sparse constrained models, which is then optimized by LCPP, against regularized learning models in the following form:
\[\min_x\, \psi(x)=\tfrac{1}{n}\tsum_{i=1}^n L_i(x) + \alpha g(x).\]
As described  above, $g(x)$ is the sparsity-inducing penalty function and $L_i(x)$ is  a loss function on the data. We consider both convex and nonconvex penalties, namely Lasso-type penalty $g(x)=\Vert x\Vert_1$ and MCP penalty (see Table~\ref{tab:constraint_fun_examples}).
We solve the Lasso penalty problem by linear models  provided by Sklearn \cite{scikit-learn}   and solve the MCP regularized problem by the popular solver GIST  \cite{gong2013a}. For simplicity, both GIST and LCPP set $\lambda=2$ and $\theta=5$ in MCP function, and set the maximum iteration number as 2000 for all the algorithms.  Then we use a grid of values $\alpha$ for GIST and LASSO, and $\eta$ for LCPP accordingly, to obtain the testing error under  various sparsity levels. In Figure~\ref{fig:nnz-err} we report the 0-1 error for classification and mean squared error for regression. We can clearly see the advantage of our proposed models over Lasso-type estimators. We observe that nonconvex models LCPP and GIST both perform more robustly than Lasso  across a wide range of sparsity levels. Lasso models tend to overfit with increasing number of selected features while LCPP appears to be less affected by the feature selection. 
\begin{figure}[h]
	\includegraphics[scale=0.28]{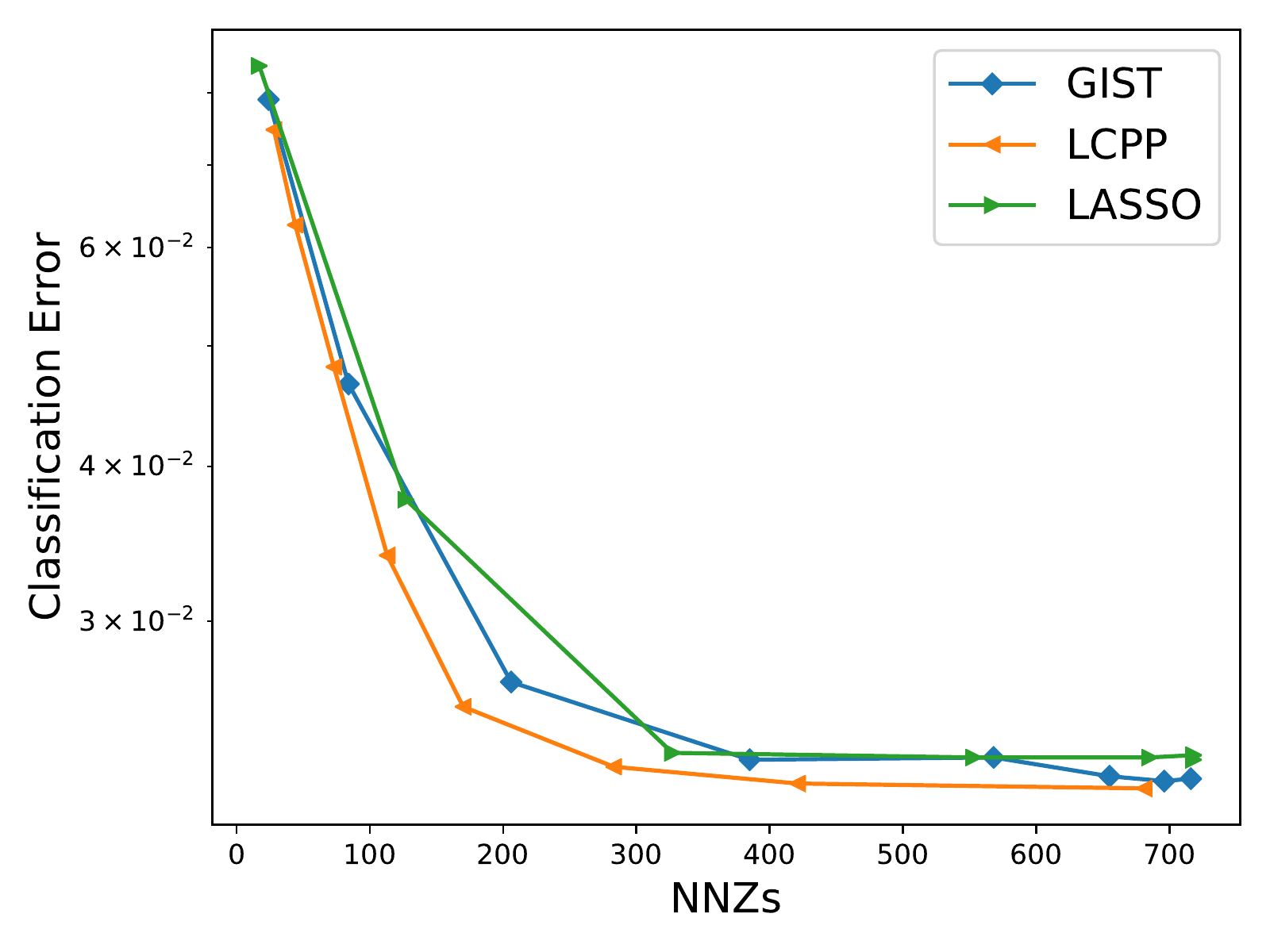} \includegraphics[scale=0.28]{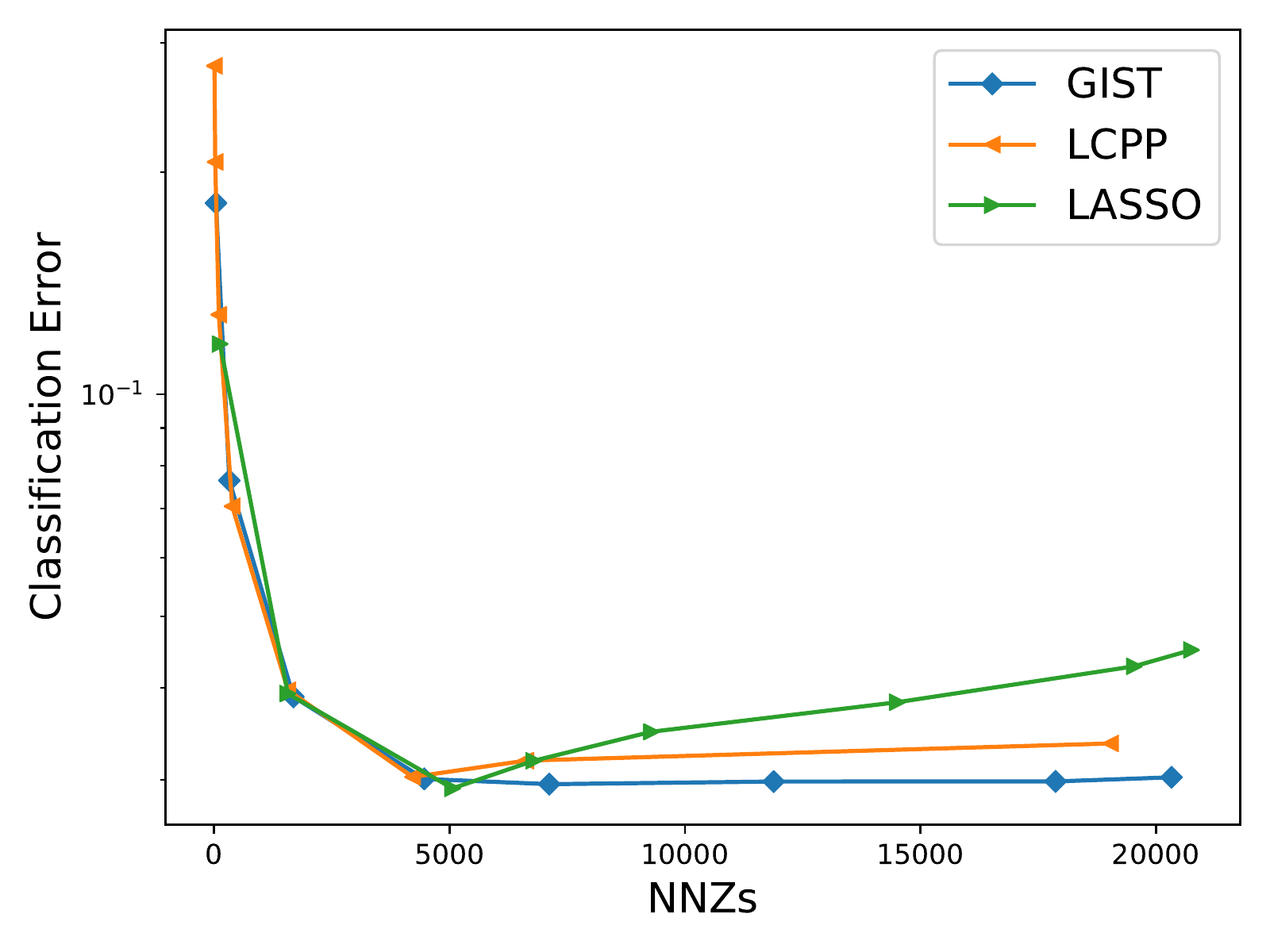}
	\includegraphics[scale=0.28]{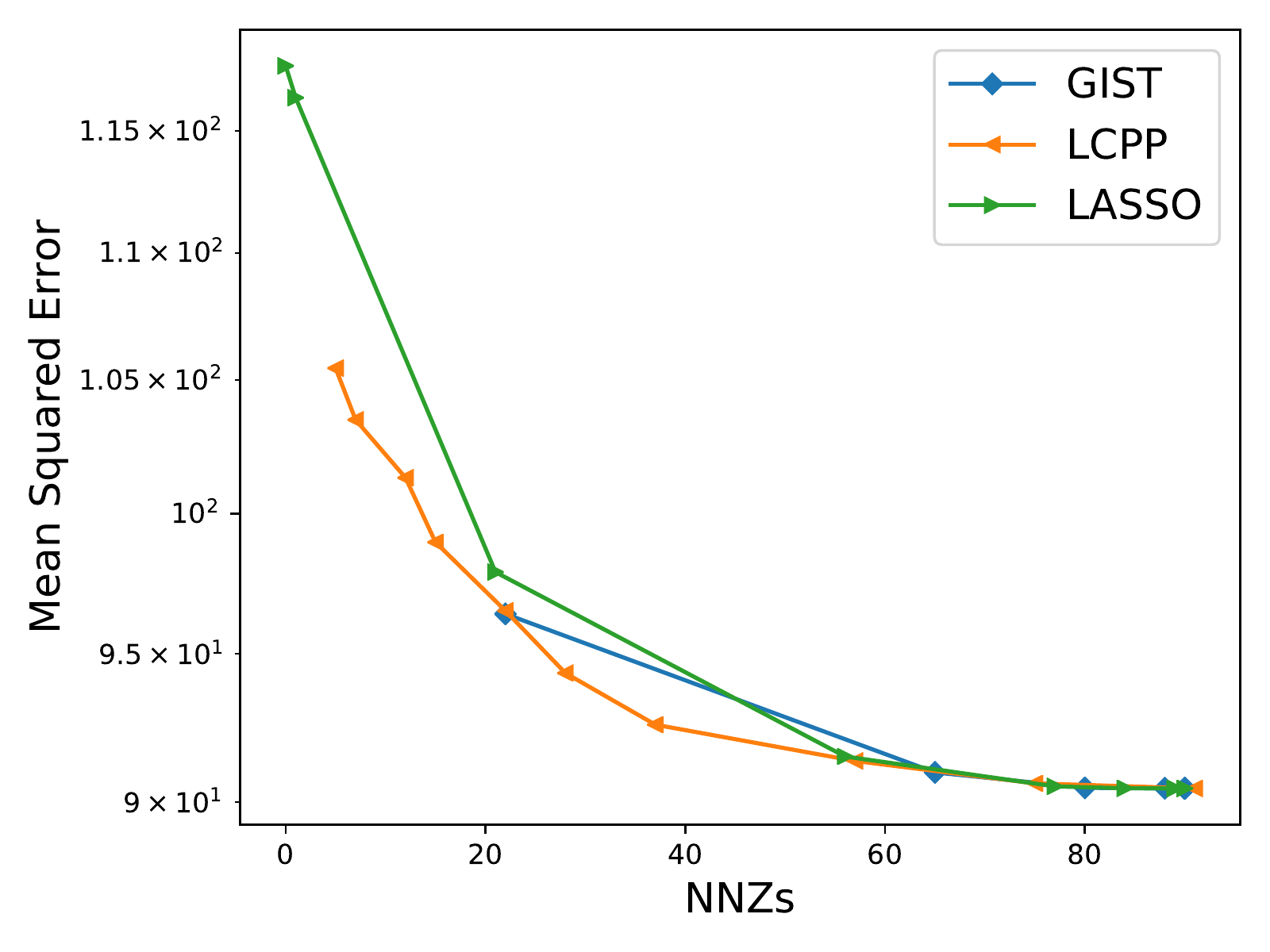}
	
	\includegraphics[scale=0.28]{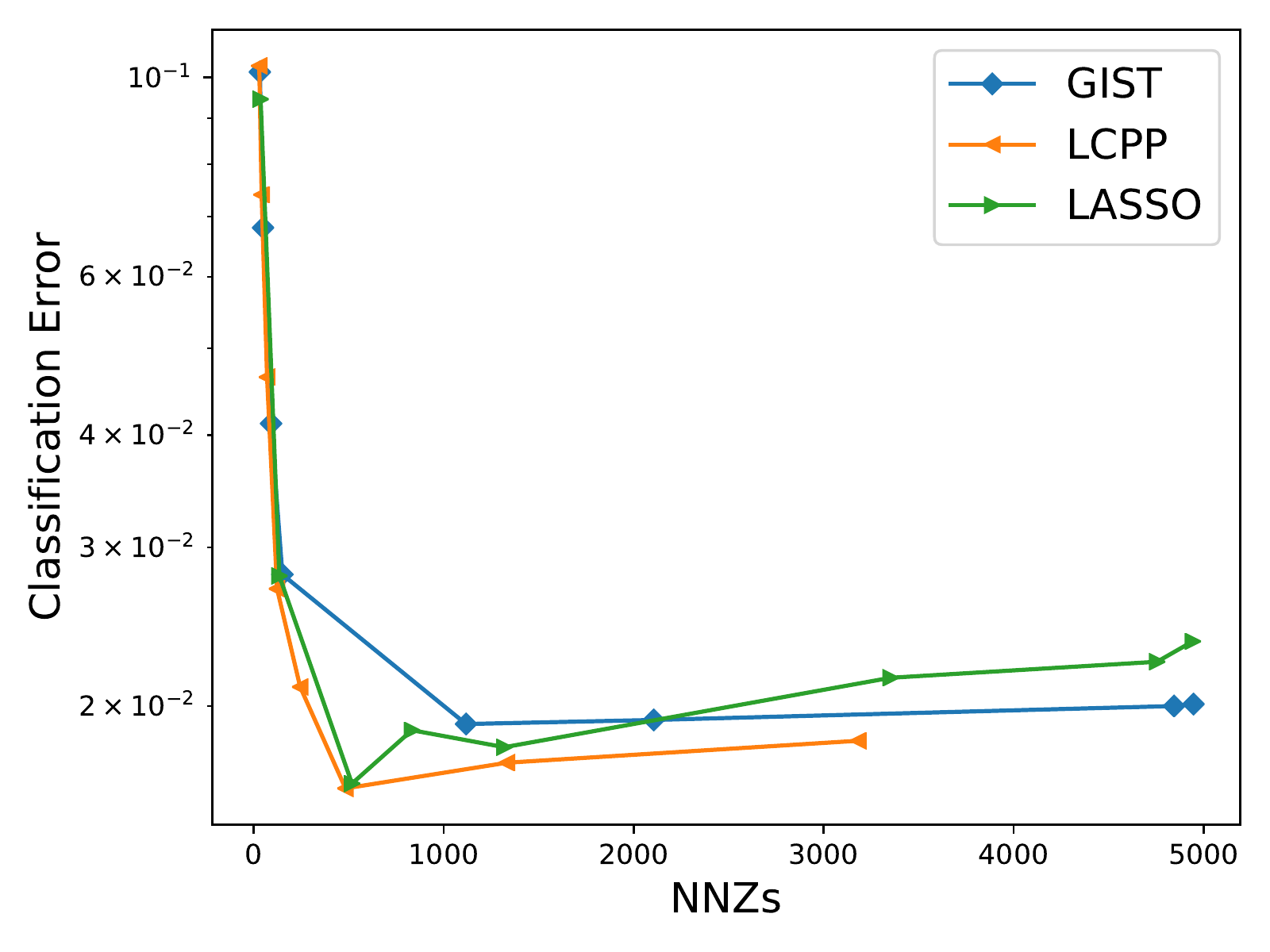}
	\includegraphics[scale=0.28]{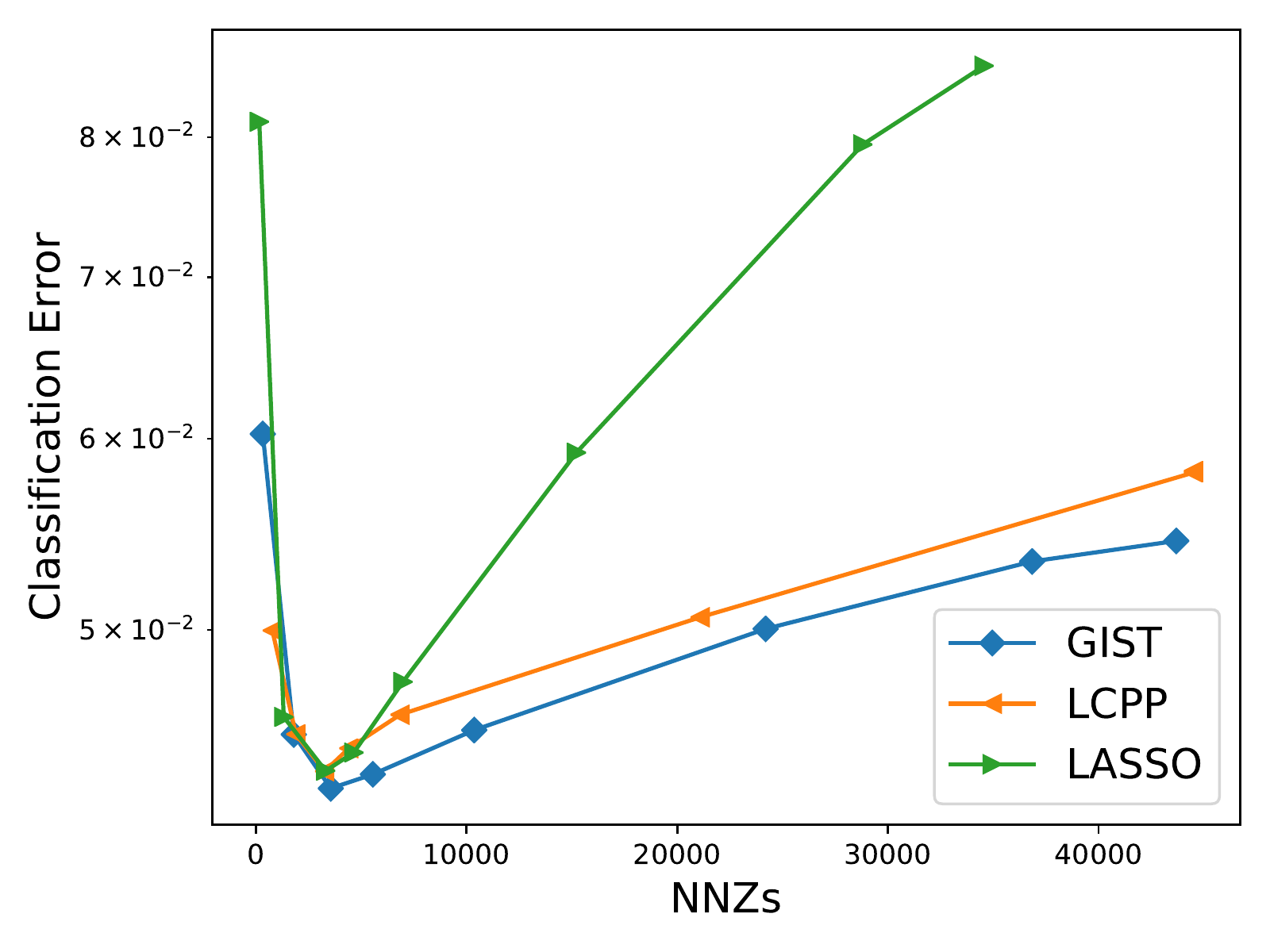}
	\includegraphics[scale=0.28]{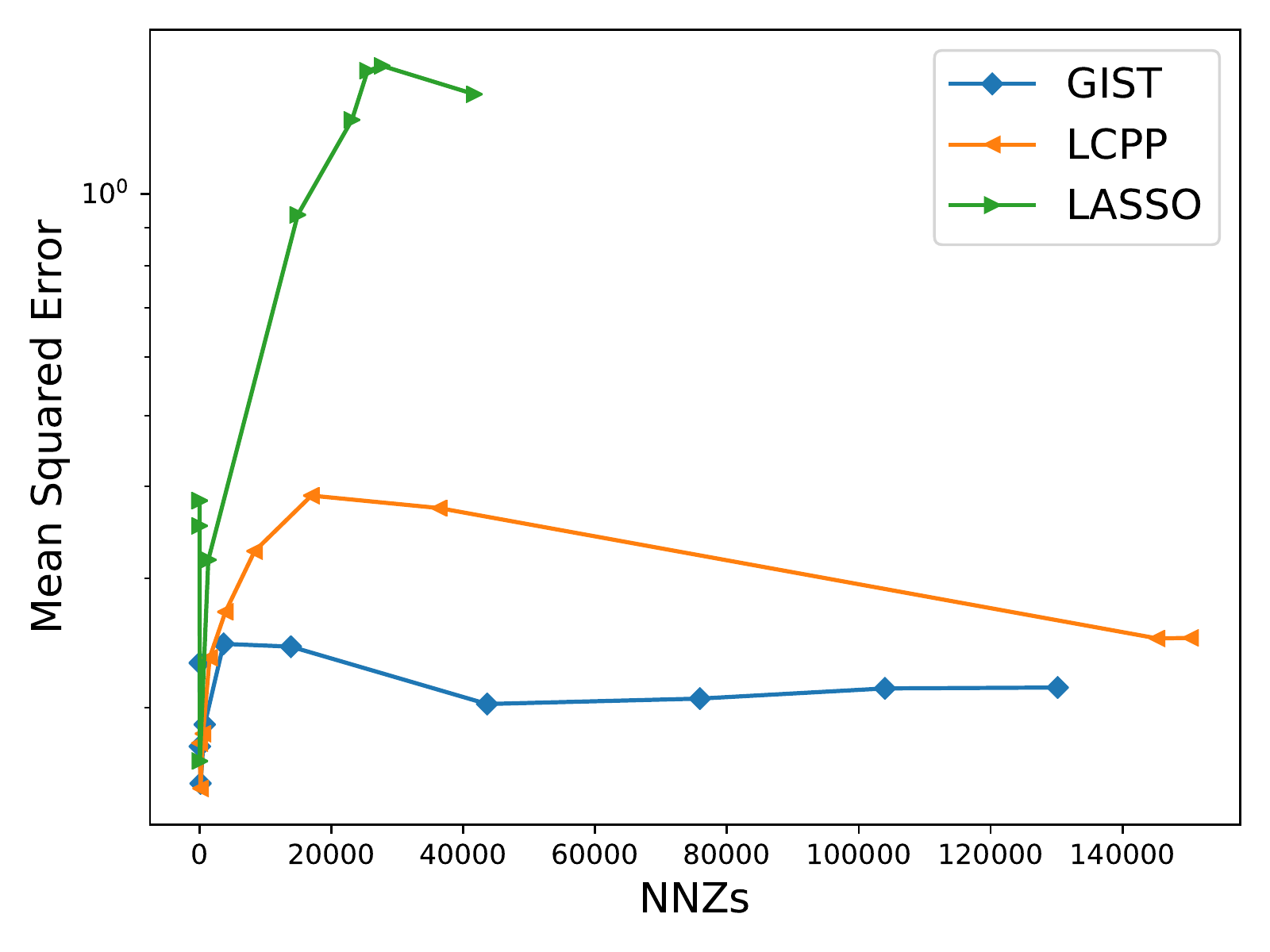} 
	
	\caption{\label{fig:nnz-err}Testing error vs number of nonzeros.
	First two columns show classification performance in clockwise order: \texttt{mnist}, \texttt{real-sim}, \texttt{rcv1.binary} and \texttt{gisette}. The third column shows regression test on \texttt{YearPredictionMSD} (top) and \texttt{E2006} (bottom).}
\end{figure}

\section{Conclusion}\label{sec:Conclusion}
We present a novel proximal point algorithm (LCPP) for nonconvex optimization with a nonconvex sparsity-inducing constraint.
We  prove the asymptotic convergence of the proposed algorithm to KKT solutions under mild conditions.
%The critical step is to show that the bound on the optimal Lagrange multipliers of the subproblems exists by using MFCQ and then to derive  explicit bounds of these multipliers by exploiting the specific structures of the sparse penalty.
For practical use, we develop an efficient procedure for projection onto the subproblem constraint set, thereby adapting projected first order methods to LCPP for large-scale optimization
%Moreover, we provide a detailed complexity analysis of LCPP for which accelerated gradient (stochastic gradient) is applied for solving the subproblems, 
and establish an $\mathcal{O}({1}/{\vep})$$(\mathcal{O}({1}/{\vep^2}))$ complexity for deterministic (stochastic) optimization.
Finally, we perform numerical experiments to demonstrate the efficiency of our proposed algorithm for large scale sparse learning.

\section*{Broader Impact}
This paper presents a new model for sparse optimization and performs an algorithmic study for the proposed model. A rigorous statistical study of this model is still missing. We believe this was due to the tacit assumption that constrained optimization was more challenging compared to regularized optimization. This work takes the first step in showing that efficient algorithms can be developed for the constrained model as well.
Contributions made in this paper has the potential to inspire new research from statistical, algorithmic as well as experimental point of view in the wider sparse optimization area.

\section*{Acknowledgments}
Most of this work was done while Boob was at Georgia Tech. Boob and Lan gratefully acknowledge the National Science Foundation (NSF) for its support through grant CCF 1909298. Q. Deng acknowledges funding from National Natural Science Foundation of China (Grant 11831002).
\bibliographystyle{plain}
\bibliography{example_paper}

\newpage

\appendix
%\section{Appendix: \label{sec:proof-lpp-nag}}
\section{Auxiliary results}
\subsection{Existence of KKT points}
\begin{prop}
	\label{prop:KKT-subseq}Under Assumption \ref{assu:feasibility-level},
	let $x^{0}=\hat{x}$. %, and assume that $x^{k}\neq x^{k-1}$, 
	Then, for any $k\ge 1$, we have $x^{k-1}$ is strictly feasible for the $k$-th
	subproblem. Moreover, there exists $\bar{x}^{k},\bar{y}^{k} \ge 0$
	such that $	g_{k}\left(\bar{x}^{k}\right) \le\eta_{k}$ and:
	\begin{equation}
	\begin{aligned}\partial\psi(\bar{x}^{k})+\gamma\left(\bar{x}^{k}-x^{k-1}\right)+\bar{y}^{k}\left(\partial g_{k}(\bar{x}^{k})\right) & \ni0\\
	\bar{y}^{k}\left(g_{k}\left(\bar{x}^{k}\right)-\eta_{k}\right) & =0
	%\bar{y}^{k} & \ge0
	\end{aligned}
	\label{eq:subprob-kkt}
	\end{equation}
\end{prop}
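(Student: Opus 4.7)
The proposition has two claims: (i) $x^{k-1}$ is strictly feasible for the $k$-th subproblem; and (ii) a KKT pair $(\bar{x}^k, \bar{y}^k)$ exists. My plan is to prove (i) by induction on $k$, leveraging the DC majorization property $g_k \ge g$ (with equality at $x^{k-1}$) together with strict monotonicity of $\{\eta_k\}$. Then (i) supplies a Slater point for the $k$-th subproblem, which is a convex program provided $\gamma>\mu$ (combining the lower-curvature bound \eqref{eq:low_curv_psi} with the quadratic proximal term makes $\psi_k$ strongly convex, and $g_k$ is the sum of $\lambda\|x\|_1$ and an affine term). Existence of a minimizer plus Slater's condition then yields the KKT pair via the standard convex KKT theorem.

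\textbf{Strict feasibility via induction.} The first step is to record that $g_k(x) \ge g(x)$ for all $x$, with equality at $x = x^{k-1}$: this follows from the gradient inequality for the convex function $h$, namely $h(x) \ge h(x^{k-1}) + \nabla h(x^{k-1})^T(x - x^{k-1})$, and by direct substitution at $x = x^{k-1}$. For the base case $k=1$, the starting point $x^0 = \hat{x}$ gives $g_1(x^0) = g(\hat{x}) < \eta_0 < \eta_1$ by Assumption \ref{assu:feasibility-level}. For the inductive step, assume $x^{k-1}$ was returned as a feasible solution of the $(k-1)$-th subproblem, i.e., $g_{k-1}(x^{k-1}) \le \eta_{k-1}$. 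Then
\begin{equation*}
g_k(x^{k-1}) \;=\; g(x^{k-1}) \;\le\; g_{k-1}(x^{k-1}) \;\le\; \eta_{k-1} \;<\; \eta_k,
\end{equation*}
closing the induction.

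\textbf{Existence of the KKT pair.} Under $\gamma>\mu$, the lower-curvature condition gives $\psi_k(x) \ge \psi(x^{k-1}) + \langle \psi'(x^{k-1}), x-x^{k-1}\rangle + \tfrac{\gamma-\mu}{2}\|x-x^{k-1}\|^2$, so $\psi_k$ is strongly convex and coercive; since $\psi$ is continuous and the feasible set $\{g_k(x)\le\eta_k\}$ is closed, a minimizer $\bar{x}^k$ exists (and is unique). Strict feasibility of $x^{k-1}$ from part (i) is Slater's condition, so by the convex KKT theorem (Rockafellar-style subdifferential calculus) there exists $\bar{y}^k \ge 0$ with $0 \in \partial \psi_k(\bar{x}^k) + \bar{y}^k\, \partial g_k(\bar{x}^k)$ and $\bar{y}^k(g_k(\bar{x}^k)-\eta_k)=0$. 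Finally, the proximal quadratic is smooth, so $\partial \psi_k(\bar{x}^k) = \partial \psi(\bar{x}^k) + \gamma(\bar{x}^k - x^{k-1})$, yielding exactly \eqref{eq:subprob-kkt}.

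\textbf{Main obstacle.} The one subtle point is the decomposition $\partial \psi_k(\bar{x}^k) = \partial \psi(\bar{x}^k) + \gamma(\bar{x}^k - x^{k-1})$ when $\psi$ itself is nonconvex but $\psi_k$ is convex. This is resolved by interpreting $\partial \psi$ via the limiting/Clarke subdifferential framework already adopted in the paper (following Boob et al.~\cite{boob2019proximal}): the smooth quadratic contributes its classical gradient and commutes with the limiting subdifferential under summation. With this convention in place, every other step reduces to standard convex-analytic machinery.
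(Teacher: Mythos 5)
Your proposal is correct and follows essentially the same route as the paper: induction on strict feasibility via the chain $g_{k}(x^{k-1}) = g(x^{k-1}) \le g_{k-1}(x^{k-1}) \le \eta_{k-1} < \eta_{k}$ (majorization plus strictly increasing levels), then Slater's condition to extract the KKT pair. Your additional care about coercivity of $\psi_k$ and the subdifferential decomposition is a welcome but inessential elaboration of steps the paper leaves implicit.
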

\begin{proof}
	Since $x^0$ satisfies $g(x^0) \le \eta_{0} < \eta_{1}$ so we have that first subproblem is well defined. We prove the result by induction. First of all, %$x^{0}$ is strictly feasible solution of the first problem by our assumption: $g_{1}(x^{0})=g(x^{0})\le\eta_{0}<\eta_{1}$. KKT condition is guaranteed by Slater condition. 
	suppose $x^{k-1}$
	is strictly feasible for $k$-th subproblem: $g_{k}(x^{k-1}) < \eta_{k}$. Then we note that this problem is also valid and a feasible $x^k$ exists. Hence, algorithm is well-defined. Now, note that 
	\[
	g_{k+1}(x^{k})= g(x^{k})\le g_{k}(x^{k})\le\eta_{k}<\eta_{k+1}.
	\]
	where first inequality follows due to majorization, second inequality follows due to feasibility of $x^k$ for $k$-the subproblem and third strict inequality follows due to strictly increasing nature of sequence $\{\eta_{k}\}$. \\
	Since $k$-th subproblem has $x^{k-1}$ as strictly feasible point satisfying Slater condition so we obtain existence of $\bar{x}^k$ and $\bar{y}^k \ge 0$ satisfying the KKT condition \eqref{eq:subprob-kkt}.
\end{proof}

\subsection{Proof of Theorem \ref{thm:conv_asym_KKT}} \label{apx:proof_asym_conv}
In order to prove this theorem, we first state the following intermediate result.
\begin{prop}
	\label{prop:square-sum-2}
	Let $\pi_k$ denotes the randomness of $x^1,x^2,...,x^{k-1}$. Assume that there exists a  $\rho\in[0, \gamma-\mu]$  and a summable nonnegative sequence $\zeta_k $ ($\zeta_k\ge 0,\, \tsum_{k=1}^{\infty}\zeta_{k}<\infty$) such that
	\begin{equation} %\label{inexactness}
	\Ebb\left[\psi_{k}(x^{k})-\psi_{k}(\bar{x}^{k})|\pi_k\right]\le \tfrac{\rho}{2}\norm{\bar{x}^k-x^{k-1}}^2 + \zeta_{k}
	\end{equation}
	Then, under Assumption \ref{assu:feasibility-level}, we have \\
	1. The sequence $\Ebb[\psi(x^k)]$ is bounded;\\
	2. $\lim_{k\raw\infty}\psi(x^{k})$ exists a.s.;\\
	3. $\lim_{k\raw\infty}\norm{x^{k-1}-\bar{x}^{k}}=0$ a.s.;\\
	4. If the whole algorithm is deterministic then $\psi(x^k)$ is bounded. Moreover, if $\zeta_k=0$, then the sequence $\psi(x^k)$ is monotonically decreasing and convergent.
\end{prop}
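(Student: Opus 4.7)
\textbf{Proof proposal for Proposition \ref{prop:square-sum-2}.} The plan is to derive a one-step descent inequality of the classical Robbins--Siegmund form and then invoke the supermartingale convergence theorem.

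First, I would establish that $x^{k-1}$ is a feasible point of the $k$-th subproblem \eqref{subprob}: by Proposition \ref{prop:KKT-subseq}, $g_k(x^{k-1}) = g(x^{k-1}) \le \eta_{k-1} < \eta_k$, so $\psi_k(x^{k-1}) = \psi(x^{k-1})$ is a value that $\bar{x}^k$ must improve upon. Since $\psi$ satisfies the lower-curvature bound \eqref{eq:low_curv_psi}, the augmented objective $\psi_k(x) = \psi(x) + \tfrac{\gamma}{2}\|x-x^{k-1}\|^2$ is $(\gamma-\mu)$-strongly convex, so from the constrained optimality of $\bar{x}^k$ and feasibility of $x^{k-1}$ one obtains
\begin{equation*}
\psi(x^{k-1}) \;=\; \psi_k(x^{k-1}) \;\ge\; \psi_k(\bar{x}^k) + \tfrac{\gamma-\mu}{2}\|\bar{x}^k - x^{k-1}\|^2.
\end{equation*}

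Next, combining with the inexact solve hypothesis and with the trivial bound $\psi(x^k) \le \psi_k(x^k)$ (since the proximal term is nonnegative), I get, after taking conditional expectation given $\pi_k$,
\begin{equation*}
\Ebb[\psi(x^k)\mid\pi_k] \;\le\; \psi_k(\bar{x}^k) + \tfrac{\rho}{2}\|\bar{x}^k-x^{k-1}\|^2 + \zeta_k \;\le\; \psi(x^{k-1}) - \tfrac{\gamma-\mu-\rho}{2}\|\bar{x}^k-x^{k-1}\|^2 + \zeta_k.
\end{equation*}
Letting $V_k := \psi(x^{k-1}) - \psi^\ast$, where $\psi^\ast := \inf_{g(x)\le\eta}\psi(x)$ is the finite lower bound on the feasible set (already implicit in Theorem \ref{thm:complexity_main}), the above reads
\begin{equation*}
\Ebb[V_{k+1}\mid\pi_k] \;\le\; V_k - \tfrac{\gamma-\mu-\rho}{2}\|\bar{x}^k - x^{k-1}\|^2 + \zeta_k,
\end{equation*}
which is precisely the Robbins--Siegmund recursion with nonnegative $V_k$, nonnegative ``error'' $U_k = \tfrac{\gamma-\mu-\rho}{2}\|\bar{x}^k-x^{k-1}\|^2$, and summable noise $\zeta_k$.

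Robbins--Siegmund then delivers three conclusions at once: $V_k$ (equivalently $\psi(x^{k-1})$) converges almost surely, yielding part (2); $\sum_{k} \|\bar{x}^k-x^{k-1}\|^2<\infty$ a.s. (assuming the strict case $\rho<\gamma-\mu$; the boundary $\rho=\gamma-\mu$ would need to be excluded or handled by a truncation argument), which yields $\|\bar x^k-x^{k-1}\|\to 0$ a.s.\ and hence part (3); and taking unconditional expectations over the recursion and telescoping gives $\Ebb[\psi(x^k)] \le \psi(x^0)+\sum_{j=1}^\infty \zeta_j < \infty$, which together with $\psi\ge\psi^\ast$ gives the boundedness statement (1). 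For the deterministic conclusion (4), the same inequalities hold pointwise: the sequence $\psi(x^k)$ is trapped in $[\psi^\ast,\psi(x^0)+\sum_j\zeta_j]$, hence bounded; if $\zeta_k\equiv 0$ the recursion becomes $\psi(x^k) \le \psi(x^{k-1}) - \tfrac{\gamma-\mu-\rho}{2}\|\bar x^k-x^{k-1}\|^2$, so $\psi(x^k)$ is monotonically non-increasing and bounded below, hence convergent.

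The main obstacle I anticipate is the first step: correctly deducing strong convexity of $\psi_k$ from the one-sided lower-curvature bound \eqref{eq:low_curv_psi} (rather than full weak convexity), and ensuring that the strong-convexity inequality $\psi_k(y)\ge\psi_k(\bar x^k)+\tfrac{\gamma-\mu}{2}\|y-\bar x^k\|^2$ is legitimate at the constrained minimizer $\bar x^k$ using the KKT conditions of Proposition \ref{prop:KKT-subseq}. Once that descent lemma is in hand, everything else is a direct appeal to the supermartingale convergence theorem.
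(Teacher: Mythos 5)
Your proposal is correct and follows essentially the same route as the paper: the descent inequality obtained from the $(\gamma-\mu)$-strong convexity of $\psi_k$ at the feasible point $x^{k-1}$, combined with the inexact-solve hypothesis, is exactly the paper's key recursion, and the paper then invokes the same Robbins--Siegmund supermartingale theorem (its Theorem \ref{thm:supermartingale}) to extract parts (1)--(4). Your explicit shift by $\psi^\ast$ to ensure nonnegativity and your remark that part (3) degenerates at the boundary $\rho=\gamma-\mu$ are both apt (the paper's own argument shares this boundary caveat), but they do not change the substance of the argument.
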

\begin{proof}
		Due to the strong convexity of $\psi_{k}(x)$, we have 
	\begin{align}
	\psi_{k}(\bar{x}^{k}) & \le\psi_{k}(x)-\tfrac{\gamma-\mu}{2}\norm{\bar{x}^{k}-x}^{2},\label{eq:mid2-optimality}
	\end{align}
	for all $x$ satisfying $g_k(x) \le \eta_{k}$.
	Taking $x=x^{k-1}$ and using feasibility of $x^{k-1}$ ($g_{k}(x^{k-1})\le\eta_{k})$
	we have 
	\[
	\psi(x^{k-1})\ge\psi(\bar{x}^{k})+\tfrac{\gamma}{2}\norm{\bar{x}^{k}-x^{k-1}}^{2}+\tfrac{\gamma-\mu}{2}\norm{x^{k-1}-\bar{x}^{k}}^{2}
	\]
	Together with \eqref{inexactness} we have 
	\begin{equation}\label{eq:middle-sum}
	\begin{split}
	\zeta_{k}+\psi(x^{k-1})&\ge\Ebb\big[\psi(x^{k})+\tfrac{\gamma}{2}\norm{x^{k}-x^{k-1}}^{2}|\pi_k\big]\\
	&+\tfrac{\gamma-\mu-\rho}{2}\norm{x^{k-1}-\bar{x}^{k}}^{2}.
	\end{split}
	\end{equation}
	Since $\{\zeta_k\}$ is summable, taking the expectation of $\pi_k$ and summing up all over all $k$, we have
	$\Ebb[\psi(x^k)]\le \psi(x^0) + \tsum_{s=1}^k \zeta_k<\infty$.
	Moreover, Applying Supermartingale Theorem \ref{thm:supermartingale} to \eqref{eq:middle-sum}, we have
	$\lim_{k\raw\infty}\psi(x^{k})$ exists and $\tsum_{k=1}^{\infty}\norm{x^{k-1}-\bar{x}^{k}}^{2}<\infty$
	a.s. Hence we conclude $\lim_{k\raw\infty}\norm{x^{k-1}-\bar{x}^{k}}=0$
	a.s. Part 4) can be readily deduced from \eqref{eq:middle-sum}.
\end{proof}
Now we are ready to prove Theorem \ref{thm:conv_asym_KKT}.

For simplicity, we assume the whole sequence generated by Algorithm \ref{alg:main} converges to $\wtil{x}$.
	Due to Proposition \ref{prop:KKT-subseq}, there exists a KKT point
	$(\bar{x}^{k},\bar{y}^{k}$). The optimality condition yields 
	\begin{equation}\label{eq:mid-opt-1}
	\begin{split}
	&\psi(x)+\frac{\gamma}{2}\norm{x-x^{k-1}}^{2}+\bar{y}^{k}g_{k}(x)\ge\psi(\bar{x}^{k})+\frac{\gamma}{2}\norm{\bar{x}^{k}-x^{k-1}}^{2}+\bar{y}^{k}g_{k}(\bar{x}^{k}),\quad\forall x
	\end{split}
	\end{equation}
	Since $\bar{y}^{k}$ is bounded, there exists a convergent subsequence
	$\{i_{k}\}$ that $\lim_{k\raw\infty}\bar{y}^{i_{k}}=\wtil y$ for
	some $\wtil y\ge0$. Let us take $k\raw\infty$ in (\ref{eq:mid-opt-1}).
	In view of Proposition \ref{prop:square-sum-2}, Part 3, we have $\lim_{k\raw\infty}\bar{x}^{i_{k}}=\lim_{k\raw\infty}x^{i_{k}-1}=\wtil x$ almost surely.
	Then $\lim_{k\raw\infty}h(x^{i_{k}-1})=h(\wtil x)$ and $\lim_{k\raw\infty}\nabla h(x^{i_{k}-1})=\nabla h(\wtil x)$ a.s.
	due to the continuity of $h(x)$ and $\nabla h(x)$, respectively.
	Then we have 
	\begin{align*}
	&\psi(x)+\frac{\gamma}{2}\norm{x-\wtil x}^{2}+\wtil y\left[\lambda\norm x_{1}-h(\wtil x)-\inprod{\nabla h(\wtil x)}{x-\wtil x}\right] \ge\psi(\wtil x)+\wtil yg(\wtil x), \quad a.s.
	\end{align*}
	implying that $\wtil x$ minimizes the loss function $\psi(x)+\frac{\gamma}{2}\norm{x-\wtil x}^{2}+\wtil y\left[\lambda\norm x_{1}-h(\wtil x)-\inprod{\nabla h(\wtil x)}{x-\wtil x}\right]$.
	Due to the first order optimality condition, we conclude $0\in\partial\psi(\wtil x)+\wtil y\partial g(\wtil x),$ a.s.
	
	Moreover, using the complementary slackness, we have $0=\bar{y}^{i_{k}}\left(g_{i_{k}}\left(\bar{x}^{i_{k}}\right)-\eta_{i_{k}}\right)$.
	Taking the limit of $k\raw\infty$ and noticing that $\lim_{k\raw\infty}\eta_{i_{k}}=\eta$,
	we have $0=\wtil y\left(g\left(\wtil x\right)-\eta\right)$ a.s	. 
	As a result, we conclude that $(\wtil x,\wtil y)$ is a KKT point of problem
	\eqref{noncvx-constraint}, a.s.

\subsection{Proof of Theorem \ref{thm:bound_dual_MFCQ_new}}
From KKT condition of (\ref{eq:subprob-kkt}), $\bar{x}^{k}$ is the
optimal solution of the problem
$
\min_{x\in\Rbb^d}\,\psi_{k}(x)+\bar{y}^{k}\left(g_{k}(x)-\eta_{k}\right).
$
Therefore, for any $x\in\Rbb^d$, we have 
\begin{align}\label{eq:middle-optimality-1_new}
\psi_{k}(x)+\bar{y}^{k}g_{k}(x) \ge\psi_{k}(\bar{x}^{k})+\bar{y}^{k}g_{k}(\bar{x}^{k})
\end{align}

We prove that $\{\bar{y}^{k}\}$ is bounded a.s. by contradiction. If $\left\{ \bar{y}^{k}\right\} $
has unbounded subsequence with positive probability, then conditioned under that event, there exists a subsequence $\{i_{k}\}$
such that $\bar{y}^{i_{k}}\raw\infty$. Let us divide both sides of
(\ref{eq:middle-optimality-1_new}) by $\bar{y}^{k}$ and expand $g_{k}$
by its definition. After placing $k=i_{k}$% and removing some constant factors
, we have for all $x$
\begin{equation}\label{eq:mid-opt-2-1_new}
\begin{split}
&\tfrac{1}{\bar{y}^{i_{k}}}\psi_{i_{k}}(x)+\lambda\norm x_{1}-\nabla h(x^{i_{k}-1})^Tx\\
&\ge\tfrac{1}{\bar{y}^{i_{k}}}\psi_{i_{k}}(\bar{x}^{i_{k}})+\lambda\norm{\bar{x}^{i_{k}}}_{1}-\nabla h(x^{i_{k}-1})^T\bar{x}^{i_{k}}.
\end{split}
\end{equation}
Let $\wtil x$ be any limiting point a.s. of the sequence $\left\{ x^{i_{k}-1}\right\} $. \added{By the statement of the theorem, we know that it exists and satisfies MFCQ assumption.}
Passing to some subsequence if necessary, we have $\lim_{k\raw\infty}x^{i_{k}-1}=\wtil x$ a.s.
Using Proposition \ref{prop:square-sum-2} Part 3, we have $\lim_{k\raw\infty}\bar{x}^{i_{k}}=\wtil x$ a.s. Moreover, using Proposition \ref{prop:square-sum-2} Part 2, we have $\lim_{k\raw\infty} \psi(\wb{x}^{i_k})$ exists a.s. This implies $\lim_{k\raw\infty} \tfrac{1}{\wb{y}^{i_k}} \psi_{i_{k}}(\wb{x}^{i_k}) = 0$ a.s.\\
Taking $k\raw\infty$, since $\psi_{i_{k}}(x)$ is bounded a.s. (due to existence of $\wt{x}$ a.s.), we have
$\lim_{k\raw\infty}\frac{1}{\bar{y}^{i_{k}}}\psi_{i_{k}}(x)=0$.
From Lipschitz continuity of $l_{1}$ norm and $\nabla h(x)$, we have $\lim_{k\raw\infty}\lambda\norm{\bar{x}^{i_k}}_{1}=\lambda\norm{\wtil x}_{1}$ a.s.,
and $\lim_{k\raw\infty}\nabla h(x^{i_k-1})=\nabla h(\wtil x)$ a.s., respectively. It then follows from (\ref{eq:mid-opt-2-1_new}) that for all $x$, we have 
$\lambda\norm x_{1}-\inprod{\nabla h(\wtil x)}x\ge\lambda\norm{\wtil x}_{1}-\inprod{\nabla h(\wtil x)}{\wtil x}.$ In other words, we have 
\begin{equation}\label{eq:mid-opt-3-1_new}
\mathbf{0}\in\partial\lambda\norm{\wtil x}_{1}-\nabla h(\wtil x)=\partial g(\wtil x), a.s.
\end{equation}
Moreover, due to complementary slackness and $\bar{y}^{i_{k}}>0$,
the equality $g_{i_{k}}(\bar{x}^{i_{k}})=\eta_{i_{k}}$ holds. Hence, in the limit, we have the constraint $g(\wtil x)=\eta$ active a.s. Under
MFCQ, there exists $z$ such that $\max_{v\in\partial g(\wtil{x})} z^Tv<0$.
However, from
(\ref{eq:mid-opt-3-1_new}) we have $0=z^{T}\mathbf{0}$ since $\mathbf{0} \in \partial g(\wt{x})$, leading
to a contradiction to the event that $\{\wb{y}^k\}$ contained unbounded sequence with positive probability. Hence, $\wb{y}$ is bounded a.s.

\section{Explicit and specialized bounds on the dual}\label{apx:bound_dual_overall}
Here, we discuss some of the results for explicit bounds on the dual. In particular, we focus on the SCAD and MCP case. Similar results can be extended for Exp and $\ell_p, p<0$ case since these function follows two key properties (as we will see later in the proofs):
\begin{enumerate}
	\item $\abs{\grad h(x)} \le \lambda$ for all $x$ for each of these functions.
	\item They remain bounded below a constant. See Figure \ref{fig:constraint_plots}.
\end{enumerate}
We exploit these two structural properties of these sparse constraints to obtain specialized and explicit bounds on the optimal dual of problem \ref{noncvx-constraint}. The following lemma is in order.
	
	%In the following lemma, we makes a crucial observation.
	\begin{lem}
		\label{lem:bound_dual_value}
		Let $h:\Rbb\to \Rbb$ be the the convex function which %is symmetric around $0$ and 
		satisfies $\abs{\nabla h(x)} \le \lambda$ for all $x \in \Rbb$. Then the minimum value of $\wb{g}(x; \wb{x}) :\Rbb\to\Rbb$ defined as $\wb{g}(x; \wb{x}) := \lambda \abs{x} - h(\wb{x}) - \inprod{\nabla h(\wb{x})}{x-\wb{x}}$ is achieved at $0$ for all $\wb{x} \in \Rbb$.
	\end{lem}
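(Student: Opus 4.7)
The plan is to reduce the minimization of $\bar{g}(\,\cdot\,;\bar{x})$ to a simple one-variable inequality that follows directly from the bound $|\nabla h(\bar{x})|\le\lambda$. Observe that, as a function of $x$ with $\bar{x}$ fixed, the terms $-h(\bar{x})$ and $\langle \nabla h(\bar{x}), \bar{x}\rangle$ are constants. Hence
\[
\bar{g}(x;\bar{x}) - \bar{g}(0;\bar{x}) = \lambda|x| - \nabla h(\bar{x})\, x,
\]
so it suffices to show that $\lambda|x| - \nabla h(\bar{x})\, x \ge 0$ for every $x\in\mathbb{R}$, with equality at $x=0$.

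The bound on $\nabla h$ makes this immediate: since $|\nabla h(\bar{x})|\le \lambda$, we have $\nabla h(\bar{x})\,x \le |\nabla h(\bar{x})|\cdot|x| \le \lambda |x|$, which rearranges to the desired inequality. Clearly equality holds when $x=0$, so $0$ attains the minimum of $\bar{g}(\,\cdot\,;\bar{x})$.

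An alternative way to present the same argument, which is perhaps more aligned with the rest of the paper, is via the first-order optimality condition. The subdifferential of the convex function $x\mapsto \lambda|x| - \nabla h(\bar{x})\,x$ at $x=0$ equals $[-\lambda,\lambda] - \nabla h(\bar{x})$; the hypothesis $|\nabla h(\bar{x})|\le\lambda$ guarantees $0$ lies in this set, certifying global optimality at $0$ by convexity.

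There is essentially no obstacle here: the only thing to be careful about is to isolate the $x$-dependent part of $\bar{g}(x;\bar{x})$ correctly and to note that the bound $|\nabla h(\bar{x})|\le\lambda$ holds pointwise in $\bar{x}$, which is exactly what the lemma assumes. This lemma is then the technical ingredient that, together with the structural properties listed (boundedness of $h$ below and $|\nabla h|\le\lambda$), will be used later to obtain explicit bounds on the optimal dual multipliers of the LCPP subproblems for the SCAD, MCP, Exp, and $\ell_p$ ($p<0$) constraints.
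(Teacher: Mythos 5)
Your proof is correct, and your second presentation (checking $0\in\partial\bigl(\lambda|\cdot|\bigr)(0)-\nabla h(\bar{x})=[-\lambda,\lambda]-\nabla h(\bar{x})$ via first-order optimality of the convex function $\bar{g}$) is exactly the argument the paper gives. Your first, direct verification that $\bar{g}(x;\bar{x})-\bar{g}(0;\bar{x})=\lambda|x|-\nabla h(\bar{x})\,x\ge 0$ is an equivalent, even more elementary rendering of the same observation.
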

\begin{proof}
	Note that $\wb{g}$ is a convex function for any $\wb{x} \in \Rbb$. So by first order optimality condition, if $\wh{x}$ is the minimizer of $\wb{g}$ then $0 \in \partial \wb{g}(\wh{x}; \wb{x})$. This implies
	\[\lambda \partial \abs{\wh{x}} - \nabla h(\wb{x}) \ni 0.\]
	Note that $\wh{x} = 0$ satisfies this condition since in that case $\lambda \partial \abs{\wh{x}} = [-\lambda, \lambda]$. And due to assumption on $h$, we have $\nabla h(\wb{x}) \in  [-\lambda, \lambda]$. Hence $\wh{x} = 0$ is always the minimizer.
\end{proof}
	Now note that $h_{\lambda, \theta}$ functions defined for our examples, such as SCAD or MCP. satisfy the assumption of bounded gradients in Lemma \ref{lem:bound_dual_value}. Now we use this simple result to show that $\mathbf{0}$ is the most feasible solution for each of the subproblem \eqref{subprob} generated in Algorithm \ref{alg:main} and hence we can give an explicit bound for the optimal dual value for each subproblem.
	\begin{lem}
		\label{lem:dual_bound}
		Suppose all assumptions in Lemma \ref{lem:bound_dual_value} are satisfied. Then we have for any $k \ge 1$, 
		\begin{equation} 
		\label{eq:bound_y_bar}
		\wb{y}^k \le  \tfrac{\psi_{k}(\zerobf)-\psi_{k}(\wb{x}^k)}{\eta_{k} - g(x^{k-1}) +\tsum_{i = 1}^d(\lambda-\abs{\nabla h(x^{k-1}_i)}) \abs{x^{k-1}_i}}.
		\end{equation}
	\end{lem}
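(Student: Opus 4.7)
The plan is to apply the saddle-point (Lagrangian) inequality for the convex subproblem \eqref{subprob} at the anchor point $x = \zerobf$ to get an inequality of the form $\wb{y}^k\bigl(\eta_k - g_k(\zerobf)\bigr) \le \psi_k(\zerobf) - \psi_k(\wb{x}^k)$, then to lower-bound $\eta_k - g_k(\zerobf)$ by the expression that appears as the denominator in \eqref{eq:bound_y_bar}.

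First, Proposition \ref{prop:KKT-subseq} produces a KKT pair $(\wb{x}^k,\wb{y}^k)$ with $\wb{y}^k\ge 0$, $\wb{x}^k \in \argmin_x \psi_k(x) + \wb{y}^k(g_k(x) - \eta_k)$, and complementary slackness $\wb{y}^k(g_k(\wb{x}^k)-\eta_k)=0$. Evaluating the Lagrangian minimality at $x = \zerobf$ and using complementary slackness yields
\begin{equation*}
\psi_k(\wb{x}^k) \;\le\; \psi_k(\zerobf) + \wb{y}^k\bigl(g_k(\zerobf) - \eta_k\bigr),
\end{equation*}
which rearranges to $\wb{y}^k(\eta_k - g_k(\zerobf)) \le \psi_k(\zerobf) - \psi_k(\wb{x}^k)$. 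To divide, I need the denominator to be strictly positive. Since the relevant $h$ (MCP, SCAD, Exp, $\ell_p$) is separable and each coordinate function satisfies the hypothesis of Lemma \ref{lem:bound_dual_value}, the convex function $g_k$ is likewise separable and minimized coordinatewise at $0$. Hence $g_k(\zerobf) \le g_k(x^{k-1}) = g(x^{k-1}) \le \eta_{k-1} < \eta_k$, using the majorization identity $g_k(x^{k-1})=g(x^{k-1})$ and the strict feasibility from Proposition \ref{prop:KKT-subseq}; this also ensures $\psi_k(\zerobf) \ge \psi_k(\wb{x}^k)$ so both sides are nonnegative.

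Second, I will simplify the denominator in closed form. Directly from the definition of $g_k$,
\begin{equation*}
g_k(\zerobf) = -h(x^{k-1}) + \nabla h(x^{k-1})^T x^{k-1}, \qquad \text{so} \qquad \eta_k - g_k(\zerobf) = \eta_k + h(x^{k-1}) - \nabla h(x^{k-1})^T x^{k-1}.
\end{equation*}
Applying the coordinatewise bound $\nabla h(x^{k-1})^T x^{k-1} \le \sum_{i=1}^d |\nabla h(x^{k-1}_i)|\,|x^{k-1}_i|$ and then adding and subtracting $\lambda\|x^{k-1}\|_1$ so that $-\lambda\|x^{k-1}\|_1 + h(x^{k-1}) = -g(x^{k-1})$, I obtain
\begin{equation*}
\eta_k - g_k(\zerobf) \;\ge\; \eta_k - g(x^{k-1}) + \sum_{i=1}^d \bigl(\lambda - |\nabla h(x^{k-1}_i)|\bigr)\,|x^{k-1}_i|,
\end{equation*}
which is exactly the denominator in \eqref{eq:bound_y_bar}. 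Each summand is nonnegative because $|\nabla h(x^{k-1}_i)| \le \lambda$, so the denominator is at least $\eta_k - g(x^{k-1}) > 0$. Dividing the Lagrangian inequality by this positive quantity yields the claimed bound.

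The only non-routine step is the algebraic rewriting in the last paragraph: recognizing that the natural upper bound $\eta_k + h(x^{k-1}) - \nabla h(x^{k-1})^T x^{k-1}$ can be massaged, via the elementary inequality $a^Tb \le \sum_i |a_i||b_i|$ and a $\pm \lambda\|x^{k-1}\|_1$ adjustment, into exactly the form stated. Everything else (existence of the KKT pair, feasibility of $\zerobf$, and the Lagrangian inequality at $\zerobf$) is an immediate consequence of Proposition \ref{prop:KKT-subseq} and Lemma \ref{lem:bound_dual_value}.
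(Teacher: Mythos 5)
Your proposal is correct and follows essentially the same route as the paper's proof: evaluate the Lagrangian optimality of $\wb{x}^k$ at $x=\zerobf$, use complementary slackness, and lower-bound $\eta_k - g_k(\zerobf)$ via the $\pm\lambda\|x^{k-1}\|_1$ rewriting together with $\nabla h(x^{k-1})^T x^{k-1}\le\sum_i|\nabla h(x^{k-1}_i)||x^{k-1}_i|$ and $|\nabla h(\cdot)|\le\lambda$. Your explicit verification that the numerator is nonnegative (via feasibility of $\zerobf$) is a small point the paper leaves implicit, but otherwise the arguments coincide.
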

\begin{proof}
	Note that $g_k(x) = \tsum_{i = 1}^{d} \wb{g}(x_i; x^{k-1}_i)$ where $\wb{g}$ is defined in Lemma \ref{lem:bound_dual_value}. Since assumptions of Lemma \ref{lem:bound_dual_value} hold, so we have that each individual $\wb{g}$ is minimized at $x_i = 0$. Hence $g_k(\mathbf{0})$ is the minimum value of $g_k$. In view of Proposition \ref{prop:KKT-subseq}, we have that $x_{k-1}$ is strictly feasible solution with respect to constraint $g_k(x) \le \eta_{k}$ implying $g_k(x^{k-1}) -\eta_{k} < 0$. Hence,  we have
	\begin{align*}
	&\eta_{k} -g_k(\mathbf{0})\\
	&= \eta_{k} - \big[\lambda \norm{\mathbf{0}}_1-\tsum_{i = 1}^{d}\{ h(x^{k-1}_i) + \nabla h(x^{k-1}_i)(0-x^{k-1}_i) \} \big]\\
	&= \eta_{k} + \tsum_{i = 1}^d h(x^{k-1}_i) - \tsum_{i = 1}^d\nabla h(x^{k-1}_i)x^{k-1}_i \\
	&= \eta_{k}  -g(x^{k-1}) + [g(x^{k-1}) + h(x^{k-1})] - \tsum_{i = 1}^d\nabla h(x^{k-1}_i)x^{k-1}_i \\
	&\ge \eta_{k} - g(x^{k-1}) + \lambda\norm{x^{k-1}}_1 - \tsum_{i=1}^d\abs{\nabla h(x^{k-1}_i)} \abs{x^{k-1}_i}\\
	&= \eta_{k} - g(x^{k-1}) + \tsum_{i=1}^d(\lambda-\abs{\nabla h(x^{k-1}_i)})\abs{x^{k-1}_i}\\
	&> 0.
	\end{align*}
	Here, last strict inequality follows due to the fact that $\lambda \ge \abs{\grad h(x^{k-1}_i)}$ and $\eta_{k} > g(x^{k-1})$.
	Then, we have, optimal dual $\wb{y}^k$ satisfies for all $x$:
	\begin{align*}
	\psi_{k}(\wb{x}^k) &\le \psi_{k}(x) + \wb{y}^k(g_k(x)-\eta_{k})\\
	\Rightarrow \psi_{k}(\wb{x}^k) &\le \psi_{k}(\mathbf{0}) + \wb{y}^k (g_k(\mathbf{0})-\eta_{k})\\
	\Rightarrow \wb{y}^k &\le \tfrac{\psi_{k}(\zerobf) - \psi_{k}(\wb{x}^k)}{\eta_{k} - g_k(\zerobf)}\\
	&\le \tfrac{\psi_{k}(\zerobf)-\psi_{k}(\wb{x}^k)}{\eta_{k} - g(x^{k-1}) +\tsum_{i = 1}^d(\lambda-\abs{\nabla h(x^{k-1}_i)}) \abs{x^{k-1}_i}},
	\end{align*}
	where third inequality follows due to the fact that $\eta_{k} - g_k(\mathbf{0}) > 0$
	Hence, we conclude the proof.
\end{proof}

	Note that the bound in \eqref{eq:bound_y_bar} depends on $x^{k-1}$ which can not be controlled, especially in the stochastic cases. In order to show a bound on $\wb{y}^k$ irrespective of $x^{k-1}$, we must lower bound the denominator in \eqref{eq:bound_y_bar} for all possible values of $x^{k-1}$. To accomplish this goal, we show the following two theorems in which we lower bound the term $\tsum_{i = 1}^d (\lambda-\abs{\grad h(x^{k-1}_i)}) \abs{x^{k-1}_i}$. Each of these theorem is a specialized result for SCAD and MCP function, respectively. 
	\begin{thm}
		\label{thm:dual_bd_value}
		Let $g$ be the SCAD function and $x\in \Rbb^d$ such that $g(x) =  \alpha$. Also, let $\gamma = \alpha - \beta \tfrac{\lambda^2(\theta+1)}{2}$ where $\beta$ is the largest nonnegative integer such that $\gamma \ge 0$. Then, $\tsum_{i=1}^d(\lambda - \abs{\grad h(x_i)}) \abs{x_i} \ge z(\gamma)$ where $z:[0, \tfrac{\lambda^2(\theta+1)}{2}] \to \Rbb_{\ge 0}$ is the function defined as 
		\[ z(\gamma) := \begin{cases}
		\gamma &\text{if } 0 \le \gamma \le \lambda^2\\
		\tfrac{\gamma}{\lambda}\sqrt{\tfrac{2}{\theta-1}}\sqrt{\tfrac{\lambda^2(\theta+1)}{2} - \gamma} & \text{if } \lambda^2 < \gamma \le \tfrac{\lambda^2(\theta+1)}{2}
		\end{cases}.\]
	\end{thm}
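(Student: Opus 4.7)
The plan is to reduce the multi-coordinate sum $\sum_{i=1}^{d}(\lambda - |\nabla h(x_i)|)|x_i|$ to a one-dimensional optimization via a concavity-based pairing argument, and then verify the resulting scalar bound against $z(\gamma)$ by a direct algebraic identity. Write $G := \tfrac{\lambda^2(\theta+1)}{2}$ (the maximum value of the SCAD penalty) and $f(t) := (\lambda - |\nabla h(t)|)|t|$. Since both $f$ and $g$ depend only on $|x_i|$, I would assume $x_i \ge 0$ without loss of generality. The first step is the identity $G - g(t) = \tfrac{(\theta\lambda - t)^2}{2(\theta-1)}$ on $[\lambda, \theta\lambda]$, which follows from a direct expansion of $2(\theta-1)(G-g(t))$ to $\lambda^2\theta^2 - 2\theta\lambda t + t^2$. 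This gives $f(t) = t \sqrt{\tfrac{2(G-g(t))}{\theta-1}}$ on the middle regime, and together with $f \equiv g$ on $[0,\lambda]$ and $f \equiv 0$, $g \equiv G$ on $[\theta\lambda, \infty)$, shows that $f(t) = \phi(g(t))$ for a well-defined $\phi : [0,G] \to \Rbb_{\ge 0}$ satisfying $\phi(0) = \phi(G) = 0$ and given by $\phi(g) = g$ on $[0,\lambda^2]$ and $\phi(g) = \theta\lambda\sqrt{\tfrac{2v}{\theta-1}} - 2v$ with $v := G-g$ on $[\lambda^2, G]$. A check of one-sided derivatives at the joint $g = \lambda^2$ (left value $1$, right value $(\theta-2)/(\theta-1) \le 1$) together with concavity of $\sqrt{v}$ shows $\phi$ is concave on all of $[0,G]$.

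Next I would prove the key reduction: for any $g_1, \dots, g_d \in [0, G]$ with $\sum_i g_i = \alpha$, one has $\sum_i \phi(g_i) \ge \phi(\gamma)$. The engine is the following pairwise inequality, which I would establish first: for $a, b \in [0,G]$, setting $c := a + b$ if $a+b \le G$ and $c := a + b - G$ otherwise, one has $\phi(a) + \phi(b) \ge \phi(c)$. In the no-wrap case, concavity of $\phi$ together with $\phi(0) = 0$ gives the standard superadditivity $\phi(a) \ge \tfrac{a}{a+b}\phi(a+b)$ and similarly for $b$, summing to the claim. In the wrap case $s := a + b > G$, both $a$ and $b$ lie in $[c, G]$; writing each as a convex combination of the endpoints and using $\phi(G) = 0$ yields $\phi(a) \ge \tfrac{G-a}{G-c}\phi(c)$ and $\phi(b) \ge \tfrac{G-b}{G-c}\phi(c)$, whose sum collapses to $\phi(c)$ via $(G-a)+(G-b) = 2G - s = G - c$. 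Iteratively pairing any two coordinates preserves the running $g$-sum modulo $G$ and does not increase $\sum_i \phi(g_i)$, so after $d-1$ steps we are left with a single coordinate whose value is $\alpha \bmod G = \gamma$, yielding $\sum_i f(x_i) = \sum_i \phi(g(x_i)) \ge \phi(\gamma)$.

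Finally, I would verify $\phi(\gamma) \ge z(\gamma)$ pointwise. On $[0, \lambda^2]$ both sides equal $\gamma$, so nothing is needed. On $[\lambda^2, G]$, setting $v = G - \gamma$ and noting $\theta\lambda^2 > G \ge \gamma$ (since $\theta\lambda^2 - G = \tfrac{\lambda^2(\theta-1)}{2} > 0$), the inequality $\phi(\gamma) - z(\gamma) \ge 0$ reads $\tfrac{\sqrt{2v}(\theta\lambda^2 - \gamma)}{\lambda\sqrt{\theta-1}} \ge 2v$; both sides are nonnegative, and squaring reduces it to $(\theta\lambda^2 - \gamma)^2 \ge 2\lambda^2(\theta-1)(G - \gamma)$, which after substituting $2\lambda^2(\theta-1)G = \lambda^4(\theta^2-1)$ and simplifying collapses exactly to $(\lambda^2 - \gamma)^2 \ge 0$.

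The main obstacle I anticipate is the pairwise combining step in the wrap case. One must correctly identify the valid concavity domain $[c, G]$ for $a$ and $b$, and the bookkeeping for the ``free'' mass $G$ that each wrap absorbs into a $\phi$-zero coordinate must cleanly match the modulo-$G$ reduction $\gamma = \alpha - \beta G$ from the theorem statement. Once this pairwise estimate is in hand, both the induction that collapses the $d$-coordinate sum and the final scalar identity $(\lambda^2 - \gamma)^2 \ge 0$ are essentially routine algebra.
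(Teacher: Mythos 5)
Your proof is correct, and it takes a genuinely different route from the paper on the step that matters most. The paper's one-dimensional analysis does not compute the exact value of $f(t):=(\lambda-\abs{\grad h(t)})\abs{t}$ as a function of $g(t)$; it only derives the lower bound $f(t)\ge z(g(t))$ via the crude estimate $\abs{t}\ge g(t)/\lambda$, and then handles the multi-coordinate case with an informal ``budget'' argument: it asserts that the worst configuration places $\beta$ coordinates at the full value $\tfrac{\lambda^2(\theta+1)}{2}$ (each contributing zero), reduces to $\tsum_i g(x_i)=\gamma$ over the remaining coordinates, and invokes subadditivity of the concave function $z$. That reduction is the weakest point of the paper's argument, since subadditivity alone does not apply when $\tsum_i g(x_i)=\alpha$ exceeds the domain endpoint of $z$, and the claim that the minimizer has the asserted structure is stated without proof. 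Your modular superadditivity lemma --- $\phi(a)+\phi(b)\ge\phi(c)$ with $c=(a+b)\bmod G$, proved from concavity together with $\phi(0)=\phi(G)=0$ by writing $a$ and $b$ as convex combinations of $c$ and $G$ in the wrap case --- makes this reduction rigorous and is, in my view, the right way to close that gap. The price you pay is having to compute the exact $\phi=f\circ g^{-1}$, verify its concavity across the junction at $g=\lambda^2$, and then separately check $\phi(\gamma)\ge z(\gamma)$ (which collapses to $(\lambda^2-\gamma)^2\ge 0$); the paper avoids all of this by working with $z$ directly from the start. Both routes yield the stated bound, and your wrap-case bookkeeping correctly matches $\gamma=\alpha-\beta\tfrac{\lambda^2(\theta+1)}{2}$, including the degenerate case where $\alpha$ is an exact multiple of $\tfrac{\lambda^2(\theta+1)}{2}$ and the terminal value may be either endpoint, both of which give $\phi=0=z(0)$.
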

	
	%Even though we proved this theorem for SCAD, a similar version can be easily worked out for MCP.
	\begin{thm}
		\label{thm:mcp_y_bd_val}
		Let $g$ be the MCP function and $x \in \Rbb^d$ be such that $g(x) = \alpha.$ Also let $\gamma = \alpha - \beta \tfrac{\lambda^2\theta}{2}$ where $\beta$ is the largest nonnegative integer such that $\gamma \ge 0$. Then $\tsum_{i=1}^d(\lambda-\abs{\grad h(x_i)})\abs{x_i} \ge z(\gamma)$ where $z : [0, \tfrac{\lambda^2\theta}{2}]\to \Rbb_{\ge 0}$ is the function defined as
		$ z(\gamma) := \gamma\sqrt{1-\tfrac{2\gamma}{\theta\lambda^2}} .$
	\end{thm}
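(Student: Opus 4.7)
My plan is to reduce the $d$-dimensional claim to a scalar extremal problem using separability of the MCP penalty. Set $L := \theta\lambda^2/2$ and split coordinates into $J = \{i : |x_i| \ge \theta\lambda\}$ (the ``saturated'' part, where $\nabla h(x_i) = \lambda\,\mathrm{sign}(x_i)$ contributes $0$ to the LHS and $L$ each to $g$) and $I = [d]\setminus J$ (``soft,'' with $t_i := |x_i| \in [0,\theta\lambda]$ and $\nabla h(x_i) = x_i/\theta$). The one-dimensional identity $(\lambda - t/\theta)\,t = 2g(t) - \lambda t$ valid on $[0,\theta\lambda]$, together with $m := |J| \in \{0,1,\dots,\beta\}$ and $\sum_{i\in I} g(t_i) = \alpha - mL$, yields
\[\textstyle\sum_{i=1}^d (\lambda - |\nabla h(x_i)|)\,|x_i| \;=\; 2(\alpha - mL) - \lambda S_I, \qquad S_I := \sum_{i\in I} t_i.\]
It therefore suffices to upper bound $S_I$ in terms of $\gamma = \alpha - \beta L$.

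Since $g(t) = \lambda t - t^2/(2\theta)$ is strictly concave and increasing on $[0,\theta\lambda]$ with $g(0)=0$ and $g(\theta\lambda) = L$, its inverse $g^{-1}:[0,L]\to[0,\theta\lambda]$ is convex with $g^{-1}(0)=0$, hence super-additive on its domain. Writing $S_I = \sum_{i\in I} g^{-1}(g(t_i))$, I would perform a two-point exchange argument in which each move does not decrease $\sum g^{-1}$: replace any pair $(g_i,g_j)$ with $g_i + g_j \le L$ by $(0, g_i + g_j)$ (super-additivity), and replace any pair with $g_i + g_j > L$ by $(g_i + g_j - L,\, L)$ (convexity, since the latter is more spread out than the former on the line $u + v = g_i + g_j$). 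Iterating drives any admissible configuration to the canonical extreme of $(\beta - m)$ entries at $L$ and one entry at $\gamma$, giving
\[S_I \;\le\; (\beta - m)\theta\lambda + g^{-1}(\gamma),\qquad g^{-1}(\gamma) = \theta\lambda\bigl(1 - \sqrt{1-\gamma/L}\bigr).\]
Plugging back into the first display and using $\alpha - mL = (\beta - m)L + \gamma$ and $\theta\lambda^2 = 2L$, the $m$-dependent terms cancel and I obtain
\[\textstyle\sum_i (\lambda - |\nabla h(x_i)|)\,|x_i| \;\ge\; 2\gamma - \lambda g^{-1}(\gamma) \;=\; 2\gamma - 2L + 2\sqrt{L(L-\gamma)} \;=:\; F(\gamma).\]

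Finally I would prove $F(\gamma) \ge z(\gamma)$ by the substitution $v := \sqrt{1 - \gamma/L} \in [0,1]$: direct algebra gives $F(\gamma) = 2Lv(1-v)$ and $z(\gamma) = \gamma\sqrt{1-\gamma/L} = L(1-v^2)v = Lv(1-v)(1+v)$, so $F(\gamma)/z(\gamma) = 2/(1+v) \ge 1$, completing the argument. The main obstacle will be executing the exchange argument cleanly: the cardinality $|I|$ is unrestricted and strict feasibility $t_i < \theta\lambda$ on $I$ means the canonical configuration is only approached as a supremum rather than attained, so the $S_I$-bound must be phrased with non-strict inequality, and the edge case $\gamma = 0$ (where $F(0)=z(0)=0$ and the inequality is trivial) must be handled separately. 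Beyond this, verifying super-additivity of $g^{-1}$ is routine given its convexity and vanishing at the origin.
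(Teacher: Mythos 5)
Your proof is correct, but it takes a genuinely different route from the paper's. The paper works coordinate-wise: from $g(u)=\alpha$ it extracts the crude bound $\abs{u}\ge \alpha/\lambda$ and the exact value of $\lambda-\abs{\grad h(u)}$ to get the per-coordinate estimate $(\lambda-\abs{\grad h(x_i)})\abs{x_i}\ge z(g(x_i))$, and then combines these using subadditivity of the concave function $z$ (with $z(0)=0$) together with the ``budget'' observation that the worst allocation puts $\beta$ coordinates at the saturated value $\tfrac{\lambda^2\theta}{2}$. You instead exploit the exact algebraic identity $(\lambda-t/\theta)t=2g(t)-\lambda t$ on the soft coordinates, which converts the problem into \emph{maximizing} $\tsum_{i} g^{-1}(g(t_i))$ subject to the budget constraint; your exchange argument on the convex, superadditive $g^{-1}$ is the mirror image of the paper's subadditivity argument on the concave $z$, so the two extremal steps are structurally parallel. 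What your route buys is sharpness: the cancellation of the $m$-dependent terms (via $\lambda\cdot\theta\lambda=2L$) yields the bound $F(\gamma)=2\gamma-2L+2\sqrt{L(L-\gamma)}=2Lv(1-v)$, which is exactly the one-dimensional value and dominates the paper's $z(\gamma)=Lv(1-v)(1+v)$ by the factor $2/(1+v)\ge 1$; the paper's looseness traces back to its use of $\abs{u}\ge\alpha/\lambda$ in place of the exact inverse. Your flagged caveats (non-strict inequality because the canonical configuration is a supremum, the trivial edge case $\gamma=0$, and the feasibility count $\abs{I}\ge\beta-m+1$ when $\gamma>0$) are all handled correctly, and termination of the exchange process is immediate since each step increases the number of entries pinned at $0$ or $L$.
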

	
	Note that Theorem \ref{thm:dual_bd_value} states that lower bound $z(\gamma) = 0$ when $\gamma = 0$ or $ \tfrac{\lambda^2(\theta+1)}{2}$. In essence, when $\alpha$ is exact integral multiple of $\tfrac{\lambda^2(\theta+1)}{2}$ then lower bound turn out to be zero. However, for all other values of $\alpha$, the corresponding $z(\gamma)$ is strictly positive. This can be seen from the graph of $z(\gamma)$ below. Similar claims can be made with respect to MCP in Theorem \ref{thm:mcp_y_bd_val}.
	\begin{figure}[t]
		\centering
		\includegraphics[width=0.4\linewidth]{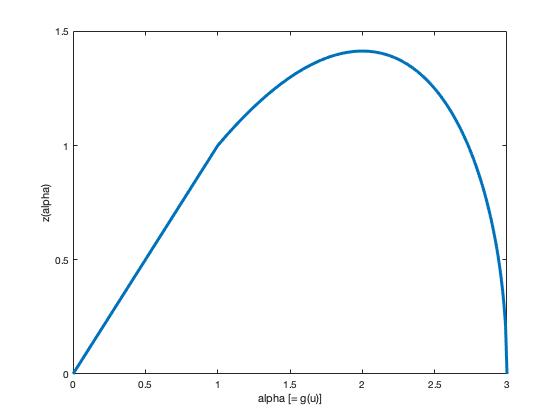}
		\caption{Plot of $z(\gamma)$ for SCAD function where $\lambda = 1, \ \theta = 5$. $z:[0,3]\to \Rbb_{\ge 0}$ where $z(0) = z(3) = 0$ otherwise $z$ is strictly positive.}
	\end{figure}
	
	Now we are ready to show a bound on $\wb{y}^k$ irrespective of $x^{k-1}$. We give a specific routine to choose the values of $\eta_k$ such that we can obtain a provable bound on the denominator in \eqref{eq:bound_y_bar} hence obtaining an upper bound on the $\wb{y}^k$ for all $k$ irrespective of $x^{k-1}$.
	\begin{prop}
		\label{prop:SCAD_eta_setting}
		Let $g$ be the SCAD function and $\eta = \beta\tfrac{\lambda^2(\theta+1)}{2} + \wt{\eta}$ where $\beta$ be the largest nonnegative integer such that $\wt{\eta} \ge 0$. Then, for properly selected $\eta_{0}$, we have that $\eta_{k} -g(x^{k-1}) + \tsum_{i = 1}^d(\lambda-\abs{\grad h(x^{k-1}_i)}) \abs{x^{k-1}_i} \ge \min\{\lambda^2, \tfrac{z(\wt{\eta})}{2}\}$.
	\end{prop}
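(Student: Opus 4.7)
The plan is to explicitly choose $\eta_{0} := \eta - \tfrac{z(\wt{\eta})}{2}$ and then split the argument by comparing $g(x^{k-1})$ to $\beta c$, where $c := \tfrac{\lambda^{2}(\theta+1)}{2}$ so that $\eta = \beta c + \wt{\eta}$. Note $\eta_{0} \in [\beta c, \eta)$ since $0 \le z(\wt{\eta}) \le \wt{\eta} < c$. Writing $q := \eta_{0} - \beta c = \wt{\eta} - \tfrac{z(\wt{\eta})}{2}$, the monotonicity of $\{\eta_{k}\}$ yields $\eta_{k} - \beta c \ge q$ for every $k \ge 0$.

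First consider \emph{Case A}, where $g(x^{k-1}) < \beta c$: dropping the nonnegative weighted $\ell_{1}$ sum leaves $\eta_{k} - g(x^{k-1}) > \eta_{k} - \beta c \ge q$. I would then verify $q \ge \min\{\lambda^{2},\, z(\wt{\eta})/2\}$ directly: the bound $q \ge z(\wt{\eta})/2$ is immediate from $\wt{\eta} \ge z(\wt{\eta})$, and whenever the minimum equals $\lambda^{2}$ (i.e., $z(\wt{\eta}) \ge 2\lambda^{2}$) we also have $\wt{\eta} \ge z(\wt{\eta}) \ge 2\lambda^{2}$, hence $q \ge \wt{\eta}/2 \ge \lambda^{2}$. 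In \emph{Case B}, where $g(x^{k-1}) \ge \beta c$, the integer ``$\beta$'' appearing in Theorem \ref{thm:dual_bd_value} applied to $x^{k-1}$ coincides with the global $\beta$, and $\gamma^{k-1} := g(x^{k-1}) - \beta c \in [0, \wt{\eta})$. Invoking Theorem \ref{thm:dual_bd_value} to lower bound the sum by $z(\gamma^{k-1})$, the LHS of the proposition is at least $\phi_{q}(\gamma^{k-1})$, where $\phi_{q}(\gamma) := q - \gamma + z(\gamma)$. Because $\phi_{q}(\gamma) = \phi_{\wt{\eta}}(\gamma) - \tfrac{z(\wt{\eta})}{2}$, it suffices to show $\phi_{\wt{\eta}}(\gamma) \ge z(\wt{\eta})$ on $[0, \wt{\eta}]$.

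The hard part will be this last inequality, which rests on a SCAD-specific derivative estimate. For $\gamma \in [0, \lambda^{2}]$, $z(\gamma) = \gamma$ yields $\phi_{\wt{\eta}}(\gamma) = \wt{\eta} \ge z(\wt{\eta})$ immediately. For $\gamma \in (\lambda^{2}, \wt{\eta}]$ (only relevant when $\wt{\eta} > \lambda^{2}$), I would compute
\[
z'(\gamma) = \tfrac{A(2c - 3\gamma)}{2\sqrt{c - \gamma}}, \qquad A := \tfrac{1}{\lambda}\sqrt{\tfrac{2}{\theta - 1}},
\]
and, via the substitution $u := c - \gamma$, verify that $\gamma \mapsto z'(\gamma)$ is strictly decreasing on $(\lambda^{2}, c)$, attaining its maximum $z'(\lambda^{2}) = \tfrac{\theta - 2}{\theta - 1} < 1$ at the left endpoint. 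Hence $\phi'_{\wt{\eta}}(\gamma) = z'(\gamma) - 1 < 0$ throughout, so $\phi_{\wt{\eta}}$ decreases on $(\lambda^{2}, \wt{\eta}]$ and attains its minimum $z(\wt{\eta})$ at $\gamma = \wt{\eta}$. Combining the two subintervals with Case B gives $\phi_{q}(\gamma^{k-1}) \ge \tfrac{z(\wt{\eta})}{2} \ge \min\{\lambda^{2},\, z(\wt{\eta})/2\}$, which, together with Case A, completes the argument.
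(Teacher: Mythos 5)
Your argument is correct and arrives at the stated bound, but it takes a noticeably different route from the paper's. The paper splits on the size of $\wt{\eta}$ and uses two different formulas for $\eta_{0}$ (namely $\beta\tfrac{\lambda^2(\theta+1)}{2}+\wt{\eta}/2$ when $\wt{\eta}\le\lambda^2$, and $\beta\tfrac{\lambda^2(\theta+1)}{2}+\min\{\lambda^2,z(\wt{\eta})\}$ otherwise); in the subcase $\gamma^{k-1}\in(\lambda^2,\wt{\eta})$ it simply discards the nonnegative slack $\eta_{k-1}-g(x^{k-1})$ and lower-bounds $z(\gamma^{k-1})$ by $\min\{z(\lambda^2),z(\wt{\eta})\}$ using only concavity of $z$ (minimum over an interval at an endpoint). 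You instead use the single choice $\eta_{0}=\eta-z(\wt{\eta})/2$, retain the slack, and reduce everything to the one inequality $\wt{\eta}-\gamma+z(\gamma)\ge z(\wt{\eta})$ on $[0,\wt{\eta}]$, which you prove from the SCAD-specific derivative bound $z'(\gamma)\le\tfrac{\theta-2}{\theta-1}<1$ on $(\lambda^2,c)$; your computation of $z'$ and of its monotonicity in $u=c-\gamma$ checks out, and the same estimate justifies the auxiliary fact $z(\wt{\eta})\le\wt{\eta}$ that you invoke in Case A (it would be worth stating that this follows from $z(\lambda^2)=\lambda^2$ together with $z'\le 1$, since it is not completely obvious for $\wt{\eta}>\lambda^2$). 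The paper's endpoint argument is shorter and never differentiates $z$; yours buys a uniform choice of $\eta_{0}$ and a single inequality covering all subcases, at the price of an explicit calculus verification. Two minor points: your identification of the integer $\beta$ in Theorem \ref{thm:dual_bd_value} with the global $\beta$ needs $g(x^{k-1})\le\eta_{k-1}<\eta$, which holds by feasibility of $x^{k-1}$ for the $(k-1)$-th subproblem; and, exactly as with the paper's Case 1, your $\eta_{0}$ degenerates to $\eta$ when $\wt{\eta}=0$, in which case the claimed bound is $0$ and the statement is vacuous, so both proofs implicitly assume $\wt{\eta}>0$.
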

	We note that very similar proposition for MCP can be proved based on Theorem \ref{thm:mcp_y_bd_val}. We skip that discussion in order to avoid repetition.
	
	\paragraph{Connection to MFCQ}
	In this section, we show the connection of MFCQ assumption in Theorem \ref{thm:bound_dual_MFCQ_new} with the bound in Theorem \ref{thm:dual_bd_value}. 
	
	Note that for the boundary points of the set $g(x) \le \eta_1$ where $\eta_{1} = \tfrac{\lambda^2(\theta+1)}{2}$ then the lower bound $z(\eta_1) = 0$. In fact, carefully following the proof of Theorem \ref{thm:dual_bd_value}, we can identify that the lower bound is tight for $x$'s such that one of the coordinate $x_i$ satisfy $\abs{x_i} \ge \lambda\theta$ and all other coordinates are $0$. In this case, we see that such points do not satisfy MFCQ. At such points, we don't have any strictly feasible directions required by MFCQ assumption. This can be easily visualized in the Figure \ref{fig:fig2} part (a) below. Note that $\lambda \theta =5$ and for any $\abs{x} \ge 5$, the feasible region is merely the axis and hence there is no strict feasible direction. This implies MFCQ indeed fails at these points.
	
	For $g(x) = \eta_2 < \eta_{1}$ the lower bound $z(\eta_2)$ is nonzero and same holds for $g(x) = \eta_3 > \eta_{1}$. Indeed, we see that for such cases, the points not satisfying MFCQ in case of $\eta_{1}$ vanish. This can be observed in Figure \ref{fig:fig2} part (b) and part (c). For the case of $\eta_{2}$ in part (b), these points become infeasible and for the case of $\eta_3$ in part (c), they are no longer boundary points. 
	
	Looking back at MFCQ from the result of Theorem \ref{thm:dual_bd_value}, we can see that how close $\eta$ is to $\tfrac{\lambda^2(\theta+1)}{2}$ shows how `close' the problem is for violating MFCQ. Moreover, the lower bound $z(\cdot)$ on the denominator of \eqref{eq:bound_y_bar} shows how quickly the dual will explode as the problem setting gets closer to violating MFCQ. 
	\begin{figure}[h]
		\centering
		\subfigure[$\eta_1 = 3$]{\includegraphics[width = 0.32\linewidth]{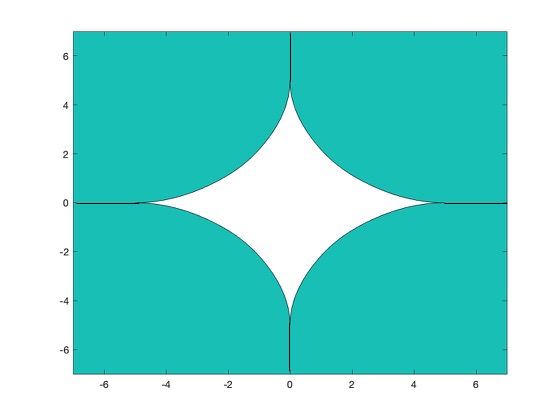}}
		\subfigure[$\eta_2 = 2.8$]{\includegraphics[width = 0.32\linewidth]{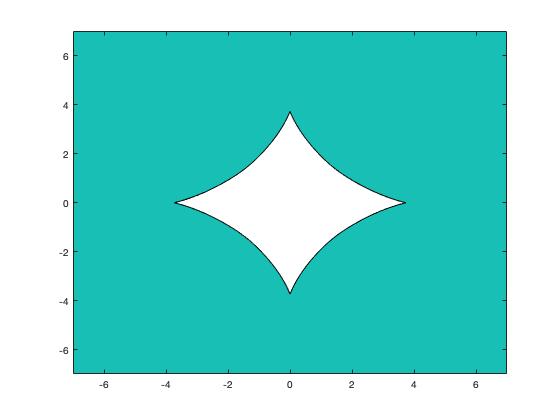}}
		\subfigure[$\eta_3 = 3.2$]{\includegraphics[width = 0.32\linewidth]{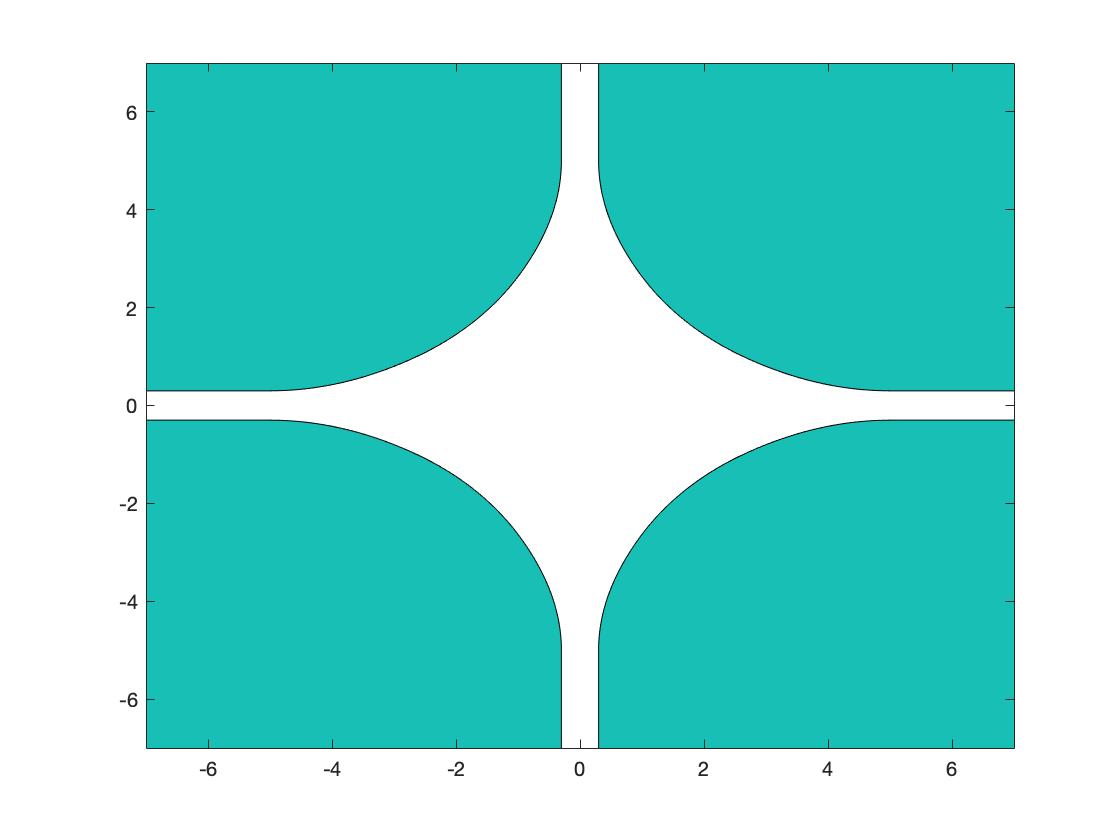}}	
		%	\begin{subfigure}[t]{0.15\textwidth}
		%		\centering
		%		\includegraphics[width = 0.4\linewidth]{eta1}
		%		\caption{$\eta_1 = 3$}
		%	\end{subfigure}
		%	\begin{subfigure}[t]{0.15\textwidth}
		%		\centering
		%		\includegraphics[width = 0.9\linewidth]{eta2}
		%		\caption{$\eta_2 = 2.8$}
		%	\end{subfigure}
		%	\begin{subfigure}[t]{0.15\textwidth}
		%		\centering
		%		\includegraphics[width = 0.9\linewidth]{eta3}
		%		\caption{$\eta_3 = 3.1$}
		%	\end{subfigure}
		\caption{All figures are plotted for $\lambda = 1$ and $\theta  = 5$. Then $\eta_{1} = \tfrac{\lambda^2(\theta+1)}{2} = 3$. In fig (a), we see that for $\abs{x} \ge 5$, the MFCQ assumption is violated since only $x$-axis is feasible. Similar observation holds for $y$-axis as well. However, in fig(b) and fig(c) such claims are no longer valid.}
		\label{fig:fig2}
	\end{figure}\\
We complete	this discussion by showing the proof of Theorem \ref{thm:dual_bd_value} and Theorem \ref{thm:mcp_y_bd_val}. We also note that similar theorems can be proved for $\ell_p, p < 0$ and Exp function in Table \ref{tab:constraint_fun_examples}.
	
\subsection{Proof of Theorem \ref{thm:dual_bd_value}}\label{apx:dual_bd_value}
	First, we show a lower bound for one-dimensional function and then extend it to higher dimensions. Suppose $u \in \Rbb$ be such that $g(u) = \alpha$. Note that since $g$ is SCAD function so $\alpha$ must lie in the set $ [0, \tfrac{\lambda^2(\theta+1)}{2}]$. Key to our analysis is the lower bound on $(\lambda-\abs{\grad h(u)} ) \abs{u}$ as a function of $\alpha$. Note that since 
	\begin{equation}
	\label{eq:int_rel1}
	g(u) = \alpha \Rightarrow \lambda \abs{u} \ge \alpha \Rightarrow \abs{u} \ge \tfrac{\alpha}{\lambda}.
	\end{equation} 
	Also note that for all $\abs{u}\le \lambda$, we have $g(u) = \lambda \abs{u}$ and $\grad h(u) = 0$ which implies $\grad h(u) = 0$ for all $g(u) = \alpha \le \lambda^2$. Hence, using this relation along with \eqref{eq:int_rel1}, we obtain 
	\begin{equation}
	\label{eq:int_rel2}
	(\lambda -\abs{\grad h(u)}) \abs{u} = \lambda \abs{u} \ge \alpha \qquad \text{if } 0 \le \alpha \le \lambda^2.
	\end{equation} We note that $\abs{\grad h(u)} = \lambda$ for all $u \ge \lambda \theta$ and $g(u) =\alpha = \tfrac{\lambda^2(\theta+1)}{2}$ for all $u \ge \lambda\theta$. Hence, 
	\begin{equation}
	\label{eq:int_rel3}
	(\lambda-\abs{\grad h(u)})\abs{u} = 0 \qquad \text{if }\alpha = \tfrac{\lambda^2(\theta+1)}{2}.
	\end{equation} Now we design a lower bound when $\alpha \in (\lambda^2, \tfrac{\lambda^2(\theta+1)}{2})$. For such values of $\alpha$, we have 
	\begin{align*}
	g(u) = &\lambda \abs{u} - \tfrac{(\abs{u}-\lambda)^2}{2(\theta-1)} = \alpha\\
	\Rightarrow &u^2 - 2\lambda\theta \abs{u} + \lambda^2 + 2\alpha(\theta-1)= 0\\
	\Rightarrow& \abs{u} = \lambda\theta - \sqrt{2(\theta-1)\big[ \tfrac{\lambda^2(\theta+1)}{2} - \alpha \big] }\\
	\Rightarrow &\abs{\grad h(u)} = \tfrac{\abs{u} - \lambda}{\theta-1} = \lambda - \sqrt{\tfrac{2}{\theta-1}}\sqrt{\tfrac{\lambda^2(\theta+1)}{2} - \alpha}\\
	\Rightarrow &\lambda - \abs{\grad h(u)} =\sqrt{\tfrac{2}{\theta-1}}\sqrt{\tfrac{\lambda^2(\theta+1)}{2} - \alpha}.
	\end{align*}
	Then, above relation along with \eqref{eq:int_rel1}, we have $(\lambda-\abs{\grad h(u)}) \abs{u} \ge \sqrt{\tfrac{2}{\theta-1}}\tfrac{\alpha}{\lambda}\sqrt{\tfrac{\lambda^2(\theta+1)}{2} - \alpha}$ for all $\alpha \in (\lambda^2, \tfrac{\lambda^2(\theta+1)}{2})$. Using this relation along with \eqref{eq:int_rel2}, \eqref{eq:int_rel3} and noting the definition of function $z(\cdot)$, we obtain a lower bound $(\lambda-\abs{\grad h(u)}) \abs{u} \ge z(\alpha)$ where $\alpha = g(u)$.
	
	%Note that $z(\alpha)$ is a concave function of $\alpha$ whose minimum values are at the two extreme i.e. $\alpha = 0$ and $\alpha = \tfrac{\lambda^2(\theta+1)}{2}$. 
	Now note that for general high-dimensional $x \in \Rbb^d$, we have $g(x) = \tsum_{i=1}^d g(x_i)= \alpha$. Then $\alpha \in [0, \tfrac{d\lambda^2(\theta+1)}{2}]$. Since each individual $g(x_i) \ge 0$, we can think of $\alpha$ as a budget such that sum of $g(x_i)$ must equal $\alpha$. In order to minimize the lower bound on $(\lambda - \abs{\grad h(x_i)}) \abs{x_i}$, we should exhaust the largest budget from $\tsum_{i=1}^dg(x_i)  = \alpha$ while maintaining the lowest possible value of the lower bound on $(\lambda - \abs{\grad h(x_i)}) \abs{x_i}$. This clearly holds by setting $\abs{x_i}$ such that $g(x_i) =  \tfrac{\lambda^2(\theta+1)}{2}$. This can be clearly observed in the figure below.
	\begin{figure}[H]
		\centering
		\includegraphics[width=0.4\linewidth]{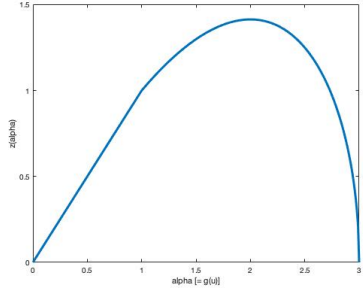}
		\caption{Plot of function $z(\alpha)$ on $y$-axis and $\alpha$ on $x$-axis for $\lambda = 1, \ \theta = 5$. The largest possible value $g(u)$ is $\tfrac{\lambda^2(\theta+1)}{2} = 3$ is achieved for $u \ge \lambda\theta = 5$ and lower bound $z(3) = 0$. Hence, setting $u \ge \lambda \theta$ maximizes the $g(u)$ and minimizes $z(\alpha) = z(g(u))$.}
	\end{figure}
	Hence, if $\alpha \in \Big[\beta\tfrac{\lambda^2(\theta+1)}{2} , (\beta+1)\tfrac{\lambda^2(\theta+1)}{2} \Big)$ for some nonnegative integer $\beta$, then we should set $\beta$ coordinates of $x$ satisfying $\abs{x_i} \ge \lambda\theta$ in order to exhaust the maximum possible budget, $\tfrac{\lambda^2(\theta+1)}{2}$, from $\alpha$ and still keep the value of the lower bound on $(\lambda - \abs{\grad h(u)}) \abs{u}$ as $0$. Hence, noting the definition of $\gamma$, the problem reduces to $\tsum_{i} g(x_i) = \gamma$ where summation is taken over remaining coordinates of $x$ and $\gamma \in \big[0, \tfrac{\lambda^2(\theta+1)}{2} \big)$. 
	
	Lets recall from the analysis in 1-D case that if $g(x_i) = \alpha_i$ then $(\lambda-\abs{\grad h(x_i)}) \abs{x_i} \ge z(\alpha_i)$ so we obtain the lower bound $\tsum_{i} z(\alpha_i)$ while $\alpha_i$'s satisfy the relation $\tsum_{i}\alpha_i = \gamma$. Moreover, $z: [0, \tfrac{\lambda^2(\theta+1)}{2}] \to \Rbb_{\ge 0}$ is a concave function with $z(0) = 0$. Then we show that $z$ is a subadditive function. Using Jensen's inequality, for all $t\in [0,1]$, we have $z(tx + (1-t)y) \ge tz(x) + (1-t)z(y)$. Using $y = 0$ and the fact that $z(0) = 0$, we have $z(tx) \ge t z(x)$ for any $t \in [0,1]$. Now using this relation along with $t = \tfrac{x}{x+y} \in [0,1]$ (for $x, y \ge 0$) we have 
	\begin{align*}
	z(x) &= z(t(x+y)) \ge t z(x+y).\\
	z(y) &= z((1-t)(x+y)) \ge (1-t)z(x+y).
	\end{align*} 
	Adding the two relations, we obtain $z(x) + z(y) \ge z(x+y)$. Hence, $z$ is a subadditive function. Since $\tsum_{i} \alpha_i = \gamma$ then the we have $\tsum_i z(\alpha_i) \ge z(\tsum_{i} \alpha_i) = z(\gamma)$. This bound is indeed achieved when we set one of $\alpha_i = \gamma$ and rest to $0$. Hence, we conclude the proof. 

\subsection{Proof of Theorem \ref{thm:mcp_y_bd_val}}
	As before, we proceed by assuming 1-D case, i.e., $u \in \Rbb$ and $g(u) = \alpha$ and then extend it to general d-dimensional setting. Then, $\alpha \in  [0, \tfrac{\lambda^2\theta}{2}]$. Then, we write function $(\lambda - \abs{\grad h(u)})\abs{u}$ in term of $\alpha$. Note that 
	\begin{align*}
	g(u) &= \lambda\abs{u} - \tfrac{u^2}{2\theta} = \alpha\\
	\Rightarrow \abs{u} &= \theta\lambda \big( 1- \sqrt{1- \tfrac{2\alpha}{\theta\lambda^2}}\big)\\
	\Rightarrow \abs{\grad h(u)} &= \tfrac{\abs{u}}{\theta} = \lambda\big( 1- \sqrt{1- \tfrac{2\alpha}{\theta\lambda^2}} \big)\\
	\Rightarrow \lambda - \abs{\grad h(u)} &= \lambda \sqrt{1- \tfrac{2\alpha}{\theta\lambda^2}}
	\end{align*}
	Moreover, we also have \eqref{eq:int_rel1}. Then, noting the definition of $z(\cdot)$, we obtain that $(\lambda-\abs{\grad h(u)})\abs{u} \ge z(\alpha)$.
	
	For high dimensional $x\in \Rbb^d$, we use similar arguments as in the proof of theorem \ref{thm:dual_bd_value}. In particular, we set $\beta$ coordinates $x$ satisfying $\abs{x_i} \ge \lambda \theta$ which exhausts the maximum possible budget $\tfrac{\lambda^2\theta}{2}$ from $\alpha$ and still keeps the value of the lower bound on $(\lambda - \abs{\grad h(x_i)} )\abs{x_i}$ as $0$. Finally, we reduce the problem to $\tsum_{i} g(x_i) = \tsum_{i} \alpha_i= \gamma$ and lower bound is $\tsum_{i}z(\alpha_i)$. As in the previous case, $z$ is concave function on nonnegative domain with $z(0) = 0$ hence it must be subadditive. So we obtain that $\tsum_{i} z(\alpha_i) \ge z(\tsum_{i}\alpha_i) = z(\gamma)$. Hence, we conclude the proof.

\subsection{Proof of Proposition \ref{prop:SCAD_eta_setting}}
	%Note that $g(x^{k-1}) \le g_{k-1}(x^{k-1}) \le \eta_{k-1}$. Hence, $g(x^{k-1}) \le \eta_{k-1}$. If $g(x^{k-1}) \le \eta_{k-1}-\delta$ then $\eta_{k} - g(x^{k-1}) \ge \delta_{k} + \delta \ge \delta$. Then, denominator in \eqref{eq:bound_y_bar} is at least $\delta$ and we have noting to prove. Otherwise, 
	
	We note that $\eta = \beta \tfrac{\lambda^2(\theta+1)}{2} + \wt{\eta}$, where $\beta$ is the largest nonnegative integer such that $\wt{\eta} \ge 0$. Clearly $\wt{\eta} \in \big[0, \tfrac{\lambda^2(\theta+1)}{2}\big)$. Now, we divide our analysis in two cases:\\
	{\bf Case 1:} Suppose $\wt{\eta} \le \lambda^2$. Then we define $
	\eta_{0}$ for Algorithm \ref{alg:main} as $\eta_{0} = \beta \tfrac{\lambda^2(\theta+1)}{2} + \tfrac{\wt{\eta}}{2}$.\\
	Now, if $g(x^{k-1}) \le \beta\tfrac{\lambda^2(\theta+1)}{2}$ then we have that $\eta_{k-1}-  g(x^{k-1}) \ge \eta_{0} - g(x^{k-1}) \ge \tfrac{\wt{\eta}}{2}$. In this case, we obtain that denominator of \eqref{eq:bound_y_bar} is at least $\tfrac{\wt{\eta}}{2}$.\\ 
	In other case, suppose that $g(x^{k-1}) > \beta\tfrac{\lambda^2(\theta+1)}{2}$. We also note that $g(x^{k-1}) \le g_{k-1}(x^{k-1}) \le \eta_{k-1} \le \eta$. Hence, we obtain $g(x^{k-1}) \le \eta = \beta\tfrac{\lambda^2(\theta+1)}{2} + \wt{\eta}$. This implies $\wt{g}(x^{k-1}):=g(x^{k-1})-\beta\tfrac{\lambda^2(\theta+1)}{2} \in [0, \lambda^2]$. Then, using Theorem \ref{thm:dual_bd_value}, we obtain that $\tsum_{i=1}^d (\lambda-\abs{\grad h(x^{k-1}_i)}) \abs{x^{k-1}_i} \ge z(\wt{g}(x^{k-1})) = \wt{g}(x^{k-1})$. Using this relation, we obtain that $\eta_{k-1} -g(x^{k-1}) + \tsum_{i=1}^d (\lambda-\abs{\grad h(x^{k-1}_i)}) \abs{x^{k-1}_i} \ge \eta_{k-1} -g(x^{k-1})  + \wt{g}(x^{k-1}) = \eta_{k-1} - \beta\tfrac{\lambda^2(\theta+1)}{2} = \wt{\eta}_{k-1} \ge \tfrac{\wt{\eta}}{2}$.\\
	So, when $\wt{\eta} \le \lambda^2$, we obtain that the denominator in \eqref{eq:bound_y_bar} is at least $\eta_{k}-\eta_{k-1}+ \tfrac{z(\wt{\eta})}{2} = \delta_{k} + \tfrac{z(\wt{\eta})}{2} \ge  \tfrac{z(\wt{\eta})}{2}$. %Ignoring $\delta_{k} > 0$, we conclude the proof of this case. %By, using the periodic extended definition of $z$, we obtain that $z(\wt{\eta}) = z(\eta)$, hence we obtain [Proof].
	
	{\bf Case 2:} Now, we look at the second case where $\wt{\eta} > \lambda^2$. In this case, we define $\eta_{0} = \beta \tfrac{\lambda^2(\theta+1)}{2} + \min\{\lambda^2, z(\wt{\eta})\}$. Then, we again note that $g(x^{k-1}) \le \beta\tfrac{\lambda^2(\theta+1)}{2}$ implies $\eta_{k-1} - g(x^{k-1}) \ge \wt{\eta}_{k-1} \ge \wt{\eta}_{0}$.\\
	In other case, we assume that $g(x^{k-1}) \in [\beta \tfrac{\lambda^2(\theta+1)}{2}, \beta \tfrac{\lambda^2(\theta+1)}{2} + \lambda^2]$, then again using Theorem \ref{thm:dual_bd_value}, we obtain $\tsum_{i=1}^d (\lambda-\abs{\grad h(x^{k-1}_i)}) \abs{x^{k-1}_i} \ge z(\wt{g}(x^{k-1})) = \wt{g}(x^{k-1})$. This implies $\eta_{k-1} - g(x^{k-1}) + \tsum_{i=1}^d (\lambda-\abs{\grad h(x^{k-1}_i)}) \abs{x^{k-1}_i} \ge \eta_{k-1} - \beta \tfrac{\lambda^2(\theta+1)}{2} = \wt{\eta}_{k-1} \ge \wt{\eta}_0$.\\
	Finally, $g(x^{k-1}) > \beta\tfrac{\lambda^2(\theta+1)}{2} + \lambda^2$ then $\wt{g}(x^{k-1}) \in (\lambda^2, \wt{\eta})$ then due to concavity of $z$, we obtain that $z(\wt{g}(x^{k-1})) \ge \min\{\lambda^2, z(\wt{\eta})\} = \wt{\eta}_{0}$. 
	
	Hence, combining the bounds in both cases, we obtain that denominator in \eqref{eq:bound_y_bar} is always bounded below by $\min\{\lambda^2, \tfrac{z(\eta)}{2}\}$.

\iffalse
%\subsection{Proof of Theorem \ref{thm:complexity-deter}}
%
%In order to prove the complexity result of convergence to KKT point,
%we first show an important property of recursive points.
%\begin{prop}
%\label{lem:composite-1}Suppose for the $k$-th subproblem, the returned
%solution $x^{k}$ satisfies the following relative error bound:
%\begin{equation}
%\psi_{k}(x^{k})-\psi_{k}(\bar{x}^{k})\le\tfrac{\rho}{2}\norm{x^{k-1}-\bar{x}^{k}}^{2},\label{eq:subprob-criteria-1}
%\end{equation}
%then for any $s\ge2$, we have
%\begin{align}
%\tsum_{k=s}^{K}\norm{x^{k-1}-\bar{x}^{k}}^{2} & \le\tfrac{2(\gamma-\mu+\rho)}{(\gamma-\mu)(\gamma-\mu-\rho)}\left[\psi(x^{s-2})-\psi(x^{*})\right],\label{eq:sum-square-1}\\
%\tsum_{k=s}^{K}\norm{x^{k}-\bar{x}^{k}}_{2}^{2} & \le\tfrac{2\rho(\gamma-\mu+\rho)}{(\gamma-\mu)^{2}(\gamma-\mu-\rho)}\left[\psi(x^{s-2})-\psi(x^{*})\right]\label{eq:sum-square-2}
%\end{align}
%\end{prop}
%
%\begin{proof}
%Using the definition, we have $\psi_{k}(x^{k-1})=\psi(x^{k-1})\le\psi_{k-1}(x^{k-1})$.
%Therefore, 
%\begin{align*}
%&\tfrac{\gamma-\mu}{2}\norm{x^{k-1}-\bar{x}^{k}}^{2}\\
% & \le\psi_{k}(x^{k-1})-\psi_{k}(\bar{x}^{k})\\
% & \le\psi_{k-1}(x^{k-1})-\psi_{k}(\bar{x}^{k})\\
% & \le\psi_{k-1}(\bar{x}^{k-1})-\psi_{k}(\bar{x}^{k})+\tfrac{\rho}{2}\norm{x^{k-2}-\bar{x}^{k-1}}^{2}
%\end{align*}
%Placing the definition of $\psi_{k}(\cdot)$ in above relation, we
%have 
%\begin{align*}
%&\tfrac{2\gamma-\mu}{2}\norm{x^{k-1}-\bar{x}^{k}}^{2}\\
%&\le\psi(\bar{x}^{k-1})-\psi(\bar{x}^{k})+\tfrac{\gamma+\rho}{2}\norm{x^{k-2}-\bar{x}^{k-1}}^{2}.
%\end{align*}
%Summing up over $k=s,s+1,...,K$, we have 
%\begin{align*}
%&\tfrac{2\gamma-\mu}{2}\tsum_{k=s}^{K}\norm{x^{k-1}-\bar{x}^{k}}^{2}\\
% & \le\psi(\bar{x}^{s-1})-\psi(\bar{x}^{K})+\tfrac{\gamma+\rho}{2}\tsum_{k=s}^{K}\norm{x^{k-2}-\bar{x}^{k-1}}^{2}.
%\end{align*}
%It then follows that 
%\begin{align*}
%&\tfrac{\gamma-\mu-\rho}{2}\tsum_{k=s}^{K}\norm{x^{k-1}-\bar{x}^{k}}^{2}\\
% & \le\psi(\bar{x}^{s-1})-\psi(\bar{x}^{K})+\tfrac{\gamma+\rho}{2}\norm{x^{s-2}-\bar{x}^{s-1}}^{2}\\
% & \le\psi_{s-1}(\bar{x}^{s-1})-\psi(\bar{x}^{K})\\
% &\quad+\tfrac{\rho}{\gamma-\mu}\left[\psi_{s-1}(x^{s-2})-\psi_{s-1}(\bar{x}^{s-1})\right]\\
% & \le\psi(x^{s-2})-\psi(\bar{x}^{K})+\tfrac{\rho}{\gamma-\mu}\left[\psi(x^{s-2})-\psi_{s-1}(\bar{x}^{s-1})\right]\\
% & \le\left(1+\tfrac{\rho}{\gamma-\mu}\right)\left[\psi(x^{s-2})-\psi(x^{*})\right].
%\end{align*}
%Above, the last inequality follows from the fact that $\psi_{k}(\bar{x}^{k})\ge\psi(\bar{x}^{k})\ge\psi(x^{*})$
%for any $k\ge 1$. Then (\ref{eq:sum-square-1}) immediately
%follows. To prove that (\ref{eq:sum-square-2}) hold, it suffices
%to note that the following inequality 
%\begin{equation}
%\norm{x^{k}-\bar{x}^{k}}_{2}^{2}\le\tfrac{2}{\gamma-\mu}\left[\psi_{k}(x^{k})-\psi_{k}(\bar{x}^{k})\right]\le\tfrac{\rho}{\gamma-\mu}\norm{x^{k-1}-\bar{x}^{k}}_{2}^{2}\label{eq:middle-01}
%\end{equation}
%holds due to (\ref{eq:subprob-criteria-1}) and the optimality of
%$\bar{x}^{k}$ for $(\gamma-\mu)$-strongly convex function $\psi_{k}$. 
%\end{proof}
%%
%We are ready to prove the main complexity result of convergence to
%KKT solutions.
%\begin{proof}
%%[Proof of Theorem]First, to prove asymptotic convergence, it sufficesto check that the errors of subproblems is summable. {[}Will add it later{]}.
%
%Due to the KKT condition for the subproblem \eqref{subprob}, we have
%\begin{equation}\label{eq:KKT_subprob}
%\begin{split}
%0 & \in\partial\psi(\bar{x}^{k})+\gamma\left(\bar{x}^{k}-x^{k-1}\right)+\bar{y}^{k}\left(\partial\norm{\bar{x}^{k}}_{1}-\nabla h(x^{k-1})\right)\\
%0 & =\bar{y}^{k}\left(\lambda\norm{\bar{x}^{k}}_{1}-h(x^{k-1})-\inprod{\nabla h(x^{k-1})}{\bar{x}^{k}-x^{k-1}}-\eta_{k}\right)
%\end{split}
%\end{equation}
%Using triangle inequality along with first relation in the above equation, we have $\dis\left(\partial_{x}\Lcal(\bar{x}^{k},\bar{y}^{k}),0\right)\le\gamma\norm{\bar{x}^{k}-x^{k-1}}+\bar{y}^{k}\norm{\nabla h(x^{k-1})-\nabla h(\bar{x}^{k})}$.
%Therefore, noting the bound on $\wb{y}^k$ from Assumption \ref{assu:bound-y-1}, we have
%\begin{align*}
%&\dis\left(\partial_{x}\Lcal(\bar{x}^{k},\bar{y}^{k}),0\right)^{2} \\
%& \le2\gamma^{2}\norm{\bar{x}^{k}-x^{k-1}}^{2}+2B^{2}\norm{\nabla h(x^{k-1})-h(\bar{x}^{k})}^{2}\\
% & \le2\left(\gamma^{2}+B^{2}L_{h}^{2}\right)\norm{\bar{x}^{k}-x^{k-1}}^{2},
%\end{align*}
%where the second inequality uses Lipschitz smoothness of $h(x)$. %andboundedness of $\bar{y}^{k}$. 
%Summing up the above over $k=s,s+1,\dots,K$,
%we conclude
%\begin{align*}
%&\tsum_{k=s}^{K}\dis\left(\partial_{x}\Lcal(\bar{x}^{k},\bar{y}^{k}),0\right)^{2}\\
%&\le\tfrac{4\left(\gamma^{2}+B^{2}L_{h}^{2}\right)(\gamma-\mu+\rho)}{(\gamma-\mu)(\gamma-\mu-\rho)}\left[\psi(x^{s-2})-\psi(x^{*})\right]
%\end{align*}
%
%For the complementary slackness part of the KKT condition, first notice that $\eta_{k}=\eta_{0}+\tsum_{t=1}^{k}\delta_{t}=\eta_{0}+\tsum_{t=1}^{k}\tfrac{\eta-\eta_{0}}{t(t+1)}=\tfrac{k}{k+1}\eta+\tfrac{1}{k+1}\eta_{0}$.
%Therefore, 
%\[
%\tsum_{k=s}^{K}\left(\eta-\eta_{k}\right)=\tsum_{k=s}^{K}\tfrac{\eta-\eta_{0}}{k+1}\le\tfrac{K+1-s}{s+1}(\eta-\eta_{0}).
%\]
%To prove the error of complementary slackness condition, observe that
%\begin{align*}
%&\bar{y}^{k}\left|\lambda\norm{\bar{x}^{k}}_{1}-h(\bar{x}^{k})-\eta\right| \\
%& \le\bar{y}^{k}\left|\lambda\norm{\bar{x}^{k}}_{1}-h(x^{k-1})-\inprod{\nabla h(x^{k-1})}{\bar{x}^{k}-x^{k-1}}-\eta_k\right|\\
% & \quad+\bar{y}^{k}\left|h(x^{k-1})+\inprod{\nabla h(x^{k-1})}{\bar{x}^{k}-x^{k-1}}-h(\bar{x}^{k})\right|+\bar{y}^{k}\left(\eta-\eta_{k}\right)\\
% & \le\tfrac{BL_{h}}{2}\norm{\bar{x}^{k}-x^{k-1}}^{2}+B\left(\eta-\eta_{k}\right),
%\end{align*}
%where second inequality follows due to second relation in \eqref{eq:KKT_subprob} and bound on $\wb{y}^k$ from Assumption \ref{assu:bound-y-1}.
%Summing up the above over $k=s,s+1,\dots,K$, we have
%\begin{align*}
%&\tsum_{k=s}^{K}\bar{y}^{k}\left|\lambda\norm{\bar{x}^{k}}_{1}-h(\bar{x}^{k})-\eta\right| \\
%& \le\tsum_{k=s}^{K}\left[\tfrac{BL_{h}}{2}\norm{\bar{x}^{k}-x^{k-1}}^{2}+B\left(\eta-\eta_{k}\right)\right]\\
% & \le\tfrac{BL_{h}}{2}\tsum_{k=s}^{K}\norm{\bar{x}^{k}-x^{k-1}}^{2}+B\tsum_{k=s}^{K}\left(\eta-\eta_{k}\right)\\
% & \le\tfrac{BL_{h}(\gamma-\mu+\rho)}{(\gamma-\mu)(\gamma-\mu-\rho)}\left[\psi(x^{s-2})-\psi(x^{*})\right]+\tfrac{\left(K+1-s\right)B(\eta-\eta_0)}{s+1}.
%\end{align*}
%Taking $s=\left\lfloor \tfrac{K+1}{2}\right\rfloor $, we have $\tfrac{K}{2}\le s\le\tfrac{K}{2}+1$.
%Therefore we have
%\begin{align*}
%&\Ebb_{\hat{k}}\norm{\dis\left(\partial_{x}\Lcal(\bar{x}^{\hat{k}},\bar{y}^{\hat{k}}),0\right)}^{2}  \le\tfrac{4C\left(\gamma^{2}+B^{2}L_{h}^{2}\right)(\gamma-\mu+\rho)}{(\gamma-\mu)(\gamma-\mu-\rho)(K+1-s)}\\
% &\hspace{11.5em} \le\tfrac{8C\left(\gamma^{2}+B^{2}L_{h}^{2}\right)(\gamma-\mu+\rho)}{(\gamma-\mu)(\gamma-\mu-\rho)K}\\
%&\Ebb_{\hat{k}}\left[\bar{y}^{\hat{k}}\left|g(\bar{x}^{\hat{k}})-\eta\right|\right]  \le\tfrac{CBL_{h}(\gamma-\mu+\rho)}{(\gamma-\mu)(\gamma-\mu-\rho)(K+1-s)}+\tfrac{B(\eta-\eta_{0})}{(s+1)}\\
% &\hspace{8.1em} \le\tfrac{2CBL_{h}(\gamma-\mu+\rho)}{(\gamma-\mu)(\gamma-\mu-\rho)K}+\tfrac{2B(\eta-\eta_{0})}{K}\\
%&\Ebb_{\hat{k}}\norm{x^{\hat{k}}-\bar{x}^{\hat{k}}}^{2}  \le\tfrac{2C\rho(\gamma-\mu+\rho)}{(\gamma-\mu)^{2}(\gamma-\mu-\rho)(K+1-s)}\\
%&\hspace{6em} \le\tfrac{4C\rho(\gamma-\mu+\rho)}{(\gamma-\mu)^{2}(\gamma-\mu-\rho)K}
%\end{align*}
%Here, note that $C = \psi(x^1)-\psi(x^*)$ is an upper bound on $\psi(x^{s-2}) - \psi(x^*)$ because of the following:
%\begin{align*}
%\psi_{k}(x^k) &- \tfrac{\rho}{2}\norm{\wb{x}^k-x^{k-1}}^2 \le \psi_{k}(\wb{x}^k)\\
%&\le \psi_{k}(x^{k-1}) - \tfrac{\gamma-\mu}{2}\norm{\wb{x}^k-x^{k-1}}^2.
%\end{align*}
%where first inequality follows due to \eqref{eq:subprob-criteria-1} and second relation follows due to strong convexity of $\psi_k$ as well as optimality of $\wb{x}^k$. Then, assuming that 
%\begin{equation}
%\label{eq:gamma_rho_mu}
%\gamma-\rho - \mu \ge 0,
%\end{equation}
%we have $\psi(x^k) \le \psi(x^{k-1})$. So we obtain $C \ge \psi(x^{s-2}) - \psi(x^*)$.

%Notice that for composite problems, applying Nesterov's algorithm,
%it thus suffices to choose $T_{k}$ such that 
%\begin{align*}
%&\tfrac{\wtil L}{2}\left( 1+\tfrac{1}{2}\sqrt{\tfrac{\wtil{\mu}}{\wtil L}} \right)^{-2(T_{k}-1)}\\
%&\le\tfrac{\wtil L}{2}\exp\left(-\left(\tfrac{2\sqrt{\tfrac{\wtil{\mu}}{\wtil L}}}{1+\tfrac{1}{2}\sqrt{\tfrac{\wtil{\mu}}{\wtil L}}}\right)(T_{k}-1)\right)\le\rho.
%\end{align*}
%It suffices to choose the iteration by $T_{k}\ge\left(\tfrac{1}{2}+\sqrt{\tfrac{\wtil L}{\wtil{\mu}}}\right)\log\tfrac{\wtil L}{\rho}$.
%
%Let us put into the specific values of remaining parameters. 
%
%If $\mu>0$, we set $\gamma=3\mu$. To apply Nesterov's accelerated
%proximal gradient method, we use the decomposition 
%\[
%\psi_{k}(x)=\left[f(x)+\tfrac{\gamma-\mu}{2}\norm{x-x^{k-1}}^{2}\right]+\left[\tfrac{\mu}{2}\norm{x-x^{k-1}}^{2}\right],
%\]
%in which the first part has Lipschitz smooth parameter $\wtil L=L+\gamma-\mu$
%and the second part has strong convexity parameter $\wtil{\mu}=\mu$.
%By setting $T_{k}\ge\left(\left(\tfrac{1}{2}+\sqrt{\tfrac{L+2\mu}{\mu}}\right)\log\max\left\{ \tfrac{L}{\mu}+2,\tfrac{L^{3}}{\mu^{3}}+\tfrac{2L^{2}}{\mu^{2}}\right\} \right)$,
%we have $\rho\le\min\left\{ \mu,\tfrac{\mu^{3}}{L^{2}}\right\} $,
%and therefore
%\begin{align*}
%\Ebb_{\hat{k}}\norm{\dis\left(\partial_{x}\Lcal(\bar{x}^{\hat{k}},\bar{y}^{\hat{k}}),0\right)}^{2} & \le\tfrac{6\left(9\mu^{2}+B^{2}L_{h}^{2}\right)C}{\mu K}\\
%\Ebb_{\hat{k}}\left[\bar{y}^{\hat{k}}\left|g(\bar{x}^{\hat{k}})-\eta\right|\right] & \le\tfrac{6CBL_{h}}{\mu K}+\tfrac{2B(\eta-\eta_{0})}{K}\\
%\Ebb_{\hat{k}}\norm{x^{\hat{k}}-\bar{x}^{\hat{k}}}^{2} & \le\tfrac{3\mu C}{L^{2}K}
%\end{align*}
%Note that $\gamma = 3\mu$ and $\rho = \min\{\mu, \tfrac{\mu^2}{L^2}\}$ satisfies the relation \eqref{eq:gamma_rho_mu}. Hence, we conclude the proof of $\mu > 0$ case of Theorem \ref{thm:complexity-deter}.
%
%If $\mu=0$, then we set $\gamma=\beta L$ ($\beta<1$). We use the
%decomposition 
%\[
%\psi_{k}(x)=f(x)+\left[\tfrac{\gamma}{2}\norm{x-x^{k-1}}^{2}+\omega(x)\right],
%\]
%with $\wtil L=L$ and $\wtil{\mu}=\gamma$. By setting $T_{k}\ge\left(\tfrac{1}{2}+\beta^{-\tfrac{1}{2}}\right)\log2$,
%we can guarantee that $\rho\le\tfrac{\beta}{2}L$. Putting all these
%together, we have
%\begin{align*}
%\Ebb_{\hat{k}}\norm{\dis\left(\partial_{x}\Lcal(\bar{x}^{\hat{k}},\bar{y}^{\hat{k}}),0\right)}^{2} & \le\tfrac{24\left(\beta^{2}L^{2}+B^{2}L_{h}^{2}\right)C}{\beta LK},\\
%\Ebb_{\hat{k}}\left[\bar{y}^{\hat{k}}\left|g(\bar{x}^{\hat{k}})-\eta\right|\right] & \le\tfrac{6CBL_{h}}{\beta LK}+\tfrac{2B(\eta-\eta_{0})}{K},\\
%\Ebb_{\hat{k}}\norm{x^{\hat{k}}-\bar{x}^{\hat{k}}}^{2} & \le\tfrac{6C}{\beta LK}.
%\end{align*}
%\end{proof}
\fi

%\section{Appendix: Stochastic optimization \label{sec:proof-lpp-sgd}}

\section{Proof of Theorem \ref{thm:complexity_main}}
\iffalse
%In this section, we derive the complexity of proximal stochastic subgradient for strongly convex composite optimization.
%
%\begin{lem}\label{three-point}
%	For any $x,y$, we have $\psi(x)\le f(y) + \inprod{\nabla f(y)}{x-y} + \omega(x) + 2M \norm{x-y}$.
%\end{lem}
%\begin{proof}
%	Due to convexity, we have
%	\begin{align*}
%f(x) & \le f(y) +\inprod{\nabla f(x)}{x-y} \\		
%	 & = f(y) +\inprod{\nabla f(y)}{x-y} +\inprod{\nabla f(x)-\nabla f(y)}{x-y} \\	
%	 & \le f(y) +\inprod{\nabla f(y)}{x-y} +\norm{\nabla f(x)-\nabla f(y)}\cdot \norm{x-y} \\	
%	 & \le f(y) +  	\inprod{\nabla f(y)}{x-y}  + (\norm{\nabla f(x)}+\norm{\nabla f(y)})\norm{x-y}
%	\end{align*}
%where, we apply Cauchy's inequality in the second inequality and apply the triangle inequality in the last inequality. The result immediately follows from boundedness subgradient assumption.
%\end{proof}
%
%\begin{lem}[Three-point] Let $f(x)$ be a convex function, $\beta>0$. For any $x$, let $$x^+=\argmin_x{f(x)+\omega(x)+\tfrac{\beta}{2}\norm{x-z}^2}.$$
%Then we have $$f(x^+)-f(x)+\omega(x^+)-\omega(x)\le \tfrac{\beta}{2}\norm{x-z}^2- \tfrac{\beta}{2}\norm{x^+-z}^2- \tfrac{\beta+\mu_\omega}{2}\norm{x-x^+}^2.$$
%\end{lem}
%\begin{proof}
%	Denote $\wtil{f}(x)=f(x)+\omega(x)+\tfrac{\beta}{2}\norm{x-z}^2$. Then $\wtil{f}(x)$ is strongly convex with parameter $\mu_\omega+\beta$. Since  $x^+$ is a global optimal solution of $\min_x \wtil{f}(x)$, we have $\wtil{f}(x)\ge \wtil{f}(x^+)+\tfrac{\beta+\mu_\omega}{2}\norm{x-x^+}^2$ based on the optimality condition for strongly convex function. The result immediately follows from rearranging the terms.
%\end{proof}
%The following theorem presents the main convergence result of Algorithm \ref{alg:prox-sgd}.
%\begin{thm}
%In Algoroithm \ref{alg:prox-sgd}, assume that $\alpha_{k}$, $\beta_{k}$ satisfy
%\begin{equation}
%\alpha_{k+1}\beta_{k+1}\le\alpha_{k}\left(\beta_{k}+\mu_{\omega}\right).\label{eq:prox-sgd-relation}
%\end{equation}
%Then we have 
%\[
%\tsum_{k=0}^{K-1}\alpha_{k}\left[\psi(x^{k+1})-\psi(x)\right]\le\left[\tfrac{\alpha_{0}\beta_{0}}{2}\norm{x-x^{0}}^{2}+\tsum_{k=0}^{K-1}\tfrac{\alpha_{k}\left(\delta_{k}^{2}+4M^{2}\right)}{\beta_{k}}\right],
%\]
%where $\delta_k=\norm{\nabla F(x^k)-\nabla f(x^k)}$. Moreover, if we set $\beta_{k}=\tfrac{\mu_{\omega}(k+1)}{2}$, $\alpha_{k}=k+2$
%and let $x=x^{*}$ be the optimal solution, then we have
%\[
%\Ebb\left[\psi(\wtil x^{K})-\psi(x^{*})\right], \Ebb\left[\psi( x^{\hat{k}})-\psi(x^{*})\right] \le\tfrac{\mu_{\omega}\norm{x^{0}-x^{*}}^{2}}{K(K+3)}+\tfrac{4(\sigma^{2}+4M^{2})}{\mu_{\omega}(K+3)}.
%\]
%\end{thm}
%
%\begin{proof}
%	Using Lemma \ref{three-point}, we have 
%\begin{equation} \label{middle-3pt}
%\inprod{\nabla F(x^k)}{x^{k+1}-x}+\omega(x^{k+1})-\omega(x)	
%\le \tfrac{\beta_k}{2}\norm{x-x^k}^2 - \tfrac{\beta_k+\mu_\omega}{2}\norm{x-x^{k+1}}^2 -\tfrac{\beta_k}{2}\norm{x^k-x^{k+1}}^2 
%\end{equation}
%
%\begin{align*}
%\begin{aligned}\inprod{\nabla f(x^{k})}{x^{k+1}-x^{k}} & =\inprod{\nabla F(x^{k})}{x^{k+1}-x^{k}}+\inprod{\nabla f(x^{k})-\nabla F(x^{k})}{x^{k+1}-x^{k}}\\
% & \le\inprod{\nabla F(x^{k})}{x^{k+1}-x^{k}}+\delta_{k}\norm{x^{k+1}-x^{k}}
%\end{aligned}
%\end{align*}
%
%\[
%\begin{aligned}\psi(x^{k+1}) & \le f(x^{k})+\inprod{\nabla f(x^{k})}{x^{k+1}-x^{k}}+2M\norm{x^{k}-x^{k+1}}+\omega(x^{k+1})\\
% & \le f(x^{k})+\inprod{\nabla F(x^{k})}{x^{k+1}-x^{k}}+\delta_{k}\norm{x^{k+1}-x^{k}}+2M\norm{x^{k}-x^{k+1}}\\
% & \le f(x^{k})+\inprod{\nabla F(x^{k})}{x-x^{k}}+\delta_{k}\norm{x^{k+1}-x^{k}}+2M\norm{x^{k}-x^{k+1}}+\omega(x)\\
% & \quad+\tfrac{\beta_{k}}{2}\norm{x-x^{k}}^{2}-\tfrac{\beta_{k}+\mu_{\omega}}{2}\norm{x-x^{k+1}}^{2}-\tfrac{\beta_{k}}{2}\norm{x^{k}-x^{k+1}}^{2}\\
% & \le\psi(x)+\left(\delta_{k}+2M\right)\norm{x^{k+1}-x^{k}}\\
% & \quad+\tfrac{\beta_{k}}{2}\norm{x-x^{k}}^{2}-\tfrac{\beta_{k}+\mu_{\omega}}{2}\norm{x-x^{k+1}}^{2}-\tfrac{\beta_{k}}{2}\norm{x^{k}-x^{k+1}}^{2}
%\end{aligned}
%\]
%
%\[
%\left(\delta_{k}+2M\right)\norm{x^{k+1}-x^{k}}-\tfrac{\beta_{k}}{2}\norm{x^{k}-x^{k+1}}^{2}\le\tfrac{\left(\delta_{k}+2M\right)^{2}}{2\beta_{k}}\le\tfrac{\delta_{k}^{2}+4M^{2}}{\beta_{k}}.
%\]
%
%Hence we have 
%\[
%\psi(x^{k+1})\le\psi(x)+\tfrac{\beta_{k}}{2}\norm{x-x^{k}}^{2}-\tfrac{\beta_{k}+\mu_{\omega}}{2}\norm{x-x^{k+1}}^{2}+\tfrac{\delta_{k}^{2}+4M^{2}}{\beta_{k}}
%\]
%
%\begin{align}
%\tsum_{k=0}^{K-1}\alpha_{k}\left[\psi(x^{k+1})-\psi(x)\right] & \le\tsum_{k=0}^{K-1}\left[\tfrac{\alpha_{k}\beta_{k}}{2}\norm{x-x^{k}}^{2}-\tfrac{\alpha_{k}\left(\beta_{k}+\mu_{\omega}\right)}{2}\norm{x-x^{k+1}}^{2}\right]+\tsum_{k=0}^{K-1}\tfrac{\alpha_{k}\left(\delta_{k}^{2}+4M^{2}\right)}{\beta_{k}}\nonumber\\
% & \le\tfrac{\alpha_{0}\beta_{0}}{2}\norm{x-x^{0}}^{2}+\tsum_{k=0}^{K-1}\tfrac{\alpha_{k}\left(\delta_{k}^{2}+4M^{2}\right)}{\beta_{k}}\label{eq:sum-bound-psgd}
%\end{align}
%where the second inequality uses the relation \eqref{eq:prox-sgd-relation}.
%
%Using convexity and the definition of $\wtil x^{K}$, we have 
%$$\psi(\wtil x^{K})-\psi(x^{*})\le\tfrac{1}{\tsum_{k=0}^{K-1}\alpha_{k}}\tsum_{k=0}^{K-1}\alpha_{k}\left[\psi(x^{k+1})-\psi(x)\right].$$ 
%Taking expectation on both sides of the above inequality and combining it with \eqref{eq:sum-bound-psgd}, we have
%\begin{align*}
%\Ebb\left[\psi(\wtil x^{K})-\psi(x^{*})\right] & \le\tfrac{2}{K(K+3)}\left[\tfrac{\mu_{\omega}\norm{x^{0}-x^{*}}^{2}}{2}+\tsum_{k=0}^{K-1}\tfrac{2(k+2)(\sigma^{2}+4M^{2})}{\mu_{\omega}(k+1)}\right]\\
% & \le\tfrac{\mu_{\omega}\norm{x^{0}-x^{*}}^{2}}{K(K+3)}+\tfrac{4(\sigma^{2}+4M^{2})}{\mu_{\omega}(K+3)}.
%\end{align*}
%We can derive an analogous bound on $\Ebb\left[\psi(x^{\hat{k}})-\psi(x^{*})\right]$, it suffices to note that, conditioned on $\hat{k}$, we have 
%$$\Ebb_{\hat{k}}\left[\psi(x^{\hat{k}})-\psi(x^{*})\right]=\tfrac{1}{\tsum_{k=0}^{K-1}\alpha_{k}}\tsum_{k=0}^{K-1}\alpha_{k}\left[\psi(x^{k+1})-\psi(x)\right].$$
%\end{proof}
%
%\subsection{Convergence of the  algorithm}
\fi
As in the previous case, we show an important recursive property of iterates.
We first state the theorem again:
\begin{thm}
	%\label{thm:complexity_main}
	Suppose Assumption \ref{assu:feasibility-level}, \ref{assu:bound-y-1} hold such that $\delta_{k} = \tfrac{\eta-\eta_{0}}{k(k+1)}$ for all $k \ge 1$. Let $\pi_k$ denote the randomness of $x^1, \dots, x^{k-1}$. Suppose for $k$-th subproblem \eqref{subprob}, the solution $x^k$ satisfies 
	\begin{align*}
	\Ebb\bracket{\psi_{k}(x^{k})-\psi_{k}(\bar{x}^{k})|\pi_k} &\le\tfrac{\rho}{2}\norm{x^{k-1}-\bar{x}^{k}}^{2}+\zeta_k,\\
	g_k(x^k) &\le \eta_{k}
	\end{align*}
	where $\rho$ lies in the interval $[0,\gamma-\mu]$ and $\{\zeta_{k}\}$ is a sequence of nonnegative numbers. If $\hat{k}$ is chosen uniformly randomly from $\left\lfloor \tfrac{K+1}{2} \right\rfloor$ to K then corresponding to $x^{\hat{k}}$, there exists pair $(\wb{x}^{\hat{k}}, \wb{y}^{\hat{k}})$ satisfying
	\begin{align*}
	&\Ebb_{\hat{k}}\big[ \dis\big(\partial_{x}\Lcal(\bar{x}^{\hat{k}},\bar{y}^{\hat{k}}),0\big)^{2}\big] \le \tfrac{8(\gamma^2 + B^2L_h^2)}{K(\gamma-\mu-\rho)}\big(\tfrac{\gamma-\mu + \rho}{\gamma-\mu}\Delta^0 + 2Z_1\big),\\
	&\Ebb_{\hat{k}}\big[\bar{y}^{\hat{k}}\big|g(\bar{x}^{\hat{k}})-\eta\big| \big] \le \tfrac{2BL_h}{K(\gamma-\mu-\rho)}\big(\tfrac{\gamma-\mu + \rho}{\gamma-\mu}\Delta^0 + 2Z_1\big) + \tfrac{2B(\eta-\eta_{0})}{K},\\
	&\Ebb_{\hat{k}}\norm{x^{\hat{k}}-\bar{x}^{\hat{k}}}^{2}\le \tfrac{4\rho(\gamma-\mu+\rho)}{K(\gamma-\mu)^2(\gamma-\mu-\rho)}\Delta^0 + \tfrac{8Z_1}{K(\gamma-\mu-\rho)},
	\end{align*}
	where, $\Delta^0 := \psi(x^0) - \psi(x^*)$ and $Z_1 := \tsum_{k=1}^K \zeta_{k}$.
\end{thm}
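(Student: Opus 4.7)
The overall plan is to reduce all three quantities to (constants times) $\|\bar{x}^k - x^{k-1}\|^2$, establish one summable bound $\sum_k \mathbb{E}\|\bar{x}^k - x^{k-1}\|^2 = O(1)$ via a telescoping argument on $\psi$, and then average over the random index $\hat{k}$. The ingredients are the subproblem KKT system \eqref{eq:subprob-kkt}, the majorization identity $g_k(x^{k-1}) = g(x^{k-1})$ with $\nabla h$ matching there, the $L_h$-Lipschitz smoothness of $\nabla h$, and the uniform dual bound $\bar y^k \le B$ from Assumption \ref{assu:bound-y-1}.

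First I would reuse the strong-convexity estimate from the proof of Proposition \ref{prop:square-sum-2}. Since $x^{k-1}$ is feasible for the $k$-th subproblem and $\psi_k$ is $(\gamma-\mu)$-strongly convex, together with the inexactness hypothesis \eqref{inexactness} one gets
$$\psi(x^{k-1}) \ge \mathbb{E}\bigl[\psi(x^k) + \tfrac{\gamma}{2}\|x^k-x^{k-1}\|^2 \mid \pi_k\bigr] + \tfrac{\gamma-\mu-\rho}{2}\|x^{k-1}-\bar{x}^k\|^2 - \zeta_k.$$
Taking total expectation, telescoping from $k=1$ to $K$, and using $\psi(x^K)\ge \psi(x^\ast)$ yields $\sum_{k=1}^K \mathbb{E}\|x^{k-1}-\bar{x}^k\|^2 \le \tfrac{2(\Delta^0 + Z)}{\gamma-\mu-\rho}$. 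A second use of $(\gamma-\mu)$-strong convexity of $\psi_k$ (at the minimizer $\bar{x}^k$) combined with \eqref{inexactness} gives $\tfrac{\gamma-\mu}{2}\mathbb{E}\|x^k-\bar{x}^k\|^2 \le \tfrac{\rho}{2}\mathbb{E}\|\bar{x}^k-x^{k-1}\|^2 + \zeta_k$, which after summing produces the $\mathbb{E}\|x^{\hat k}-\bar x^{\hat k}\|^2$ bound upon averaging.

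Next I would convert the subproblem KKT relation into an approximate KKT bound for the original Lagrangian $\mathcal{L}$. From \eqref{eq:subprob-kkt}, $-\gamma(\bar{x}^k - x^{k-1}) \in \partial\psi(\bar{x}^k) + \bar{y}^k[\lambda\partial\|\bar{x}^k\|_1 - \nabla h(x^{k-1})]$, so adding and subtracting $\bar{y}^k\nabla h(\bar{x}^k)$ and using $L_h$-Lipschitzness of $\nabla h$ together with $\bar{y}^k\le B$ yields
$$\operatorname{dist}\bigl(\partial_x\mathcal{L}(\bar{x}^k,\bar{y}^k),0\bigr)^2 \le 2(\gamma^2 + B^2 L_h^2)\|\bar{x}^k - x^{k-1}\|^2.$$
For the complementary slackness, the $L_h$-smoothness of $h$ gives $0 \le g_k(\bar{x}^k) - g(\bar{x}^k) \le \tfrac{L_h}{2}\|\bar{x}^k - x^{k-1}\|^2$; combining with $g_k(\bar{x}^k)\le \eta_k$ and the telescoping identity $\eta - \eta_k = \frac{\eta-\eta_0}{k+1}$ (from $\delta_i = (\eta-\eta_0)/(i(i+1))$) yields $|g(\bar{x}^k) - \eta| \le \tfrac{L_h}{2}\|\bar{x}^k - x^{k-1}\|^2 + \tfrac{\eta-\eta_0}{k+1}$, hence $\bar{y}^k|g(\bar{x}^k)-\eta| \le \tfrac{BL_h}{2}\|\bar{x}^k-x^{k-1}\|^2 + \tfrac{B(\eta-\eta_0)}{k+1}$.

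Finally, since $\hat k$ is uniform on $\{\lfloor (K+1)/2\rfloor,\ldots,K\}$, for any nonnegative $\{a_k\}$ we have $\mathbb{E}_{\hat k}[a_{\hat k}] \le \tfrac{2}{K}\sum_{k=1}^K a_k$, and moreover $k\ge (K+1)/2 \Rightarrow 1/(k+1) \le 2/(K+1)$. Applying these to the pointwise bounds above and substituting the telescoping estimate $\sum_k\mathbb{E}\|\bar{x}^k-x^{k-1}\|^2 \le \tfrac{2(\Delta^0+Z)}{\gamma-\mu-\rho}$ gives exactly the three inequalities in the theorem. The main obstacle is bookkeeping: matching the precise constants in the statement — in particular the $(\gamma-\mu+\rho)/(\gamma-\mu)$ and $4\rho/(\gamma-\mu)^2$ factors — requires combining the telescoping bound with the secondary strong-convexity bound on $\|x^k-\bar{x}^k\|^2$ and tracking the $\zeta_k$ residuals carefully across the conditional and total expectations.
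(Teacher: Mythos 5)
Your proposal is correct and follows essentially the same route as the paper's proof: reduce all three KKT residuals to multiples of $\|\bar{x}^k-x^{k-1}\|^2$ via the subproblem KKT system (with complementary slackness killing the $\bar{y}^k(g_k(\bar{x}^k)-\eta_k)$ term), the $L_h$-Lipschitzness of $\nabla h$ and the dual bound $B$, then control $\sum_k\mathbb{E}\|\bar{x}^k-x^{k-1}\|^2$ by telescoping the strong-convexity/inexactness inequality and average over $\hat{k}$. The only (harmless) difference is that you telescope over the full range $1,\dots,K$ and use $\mathbb{E}_{\hat{k}}[a_{\hat{k}}]\le \tfrac{2}{K}\sum_{k=1}^K a_k$, whereas the paper telescopes only over the tail $[s,K]$ with $s=\lfloor\tfrac{K+1}{2}\rfloor$ and separately bounds $\mathbb{E}[\psi(x^{s-2})]-\psi(x^*)$ — which is where its extra factors $\tfrac{\gamma-\mu+\rho}{\gamma-\mu}$ and $2Z_1$ come from — so your version yields slightly tighter constants that still imply the stated bounds.
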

We first prove the following important relationship on the sum of squares of distances of the iterates.
\begin{prop}
\label{prop:composite-2}Let requirements of Theorem \ref{thm:complexity_main} hold.
%$\pi_k$ denotes the randomness of $x^1,x^2,...,x^{k-1}$. Suppose for the $k$-th subproblem, the returned
%solution $x^{k}$ satisfies the following relation:
%\begin{equation}
%\Ebb\bracket{\psi_{k}(x^{k})-\psi_{k}(\bar{x}^{k})|\pi_k} \le\tfrac{\rho}{2}\norm{x^{k-1}-\bar{x}^{k}}^{2}+\zeta_k,\label{eq:subprob-criteria-2}
%\end{equation}
Then for any $s\ge2$, we have
\begin{align}
\Ebb\bracket{\tsum_{k=s}^{K}\norm{x^{k-1}-\bar{x}^{k}}^{2}|\pi_{s-1}} & \le\tfrac{2(A_s+Z_s)}{\gamma-\mu-\rho},\label{eq:sum-square-3}\\
\Ebb\bracket{\tsum_{k=s}^{K}\norm{x^{k}-\bar{x}^{k}}^{2}|\pi_{s-1}} & \le\tfrac{2\rho A_s}{(\gamma-\mu)(\gamma-\mu-\rho)} + \tfrac{2Z_s}{\gamma-\mu-\rho}\label{eq:sum-square-4}
\end{align}
where $A_s=\tfrac{\gamma-\mu+\rho}{\gamma-\mu}\left[\psi(x^{s-2})-\psi(x^{*})\right]$ and $Z_s = \tsum_{k=s-1}^K\zeta_{k}$.
\end{prop}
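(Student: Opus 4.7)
The plan is to leverage the per-step sufficient-decrease inequality \eqref{eq:middle-sum} established in the proof of Proposition~\ref{prop:square-sum-2}, namely
\[
\Ebb\bracket{\psi(x^k)|\pi_k} + \tfrac{\gamma-\mu-\rho}{2}\norm{x^{k-1}-\bar{x}^k}^2 \le \psi(x^{k-1}) + \zeta_k,
\]
together with one direct consequence of strong convexity and the inexactness hypothesis \eqref{inexactness}. All sigma-algebra arguments rest on the fact that $x^{k-1}$ and $\bar{x}^k$ are $\pi_k$-measurable while $x^k$ is not.

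To prove \eqref{eq:sum-square-3}, I would sum the displayed inequality from $k=s-1$ to $K$, take the conditional expectation $\Ebb[\,\cdot\,|\pi_{s-1}]$, and use the tower property (valid since $\pi_{s-1}\subseteq\pi_k$ for $k\ge s-1$) to pull every conditional expectation back to $\pi_{s-1}$. The $\psi$ terms then telescope to $\Ebb[\psi(x^K)|\pi_{s-1}]-\psi(x^{s-2})$, leaving
\[
\tfrac{\gamma-\mu-\rho}{2}\tsum_{k=s-1}^{K}\Ebb\bracket{\norm{x^{k-1}-\bar{x}^k}^2|\pi_{s-1}} \le \psi(x^{s-2}) - \Ebb\bracket{\psi(x^K)|\pi_{s-1}} + Z_s.
\]
Dropping the nonnegative $k=s-1$ summand on the left and using $\Ebb[\psi(x^K)|\pi_{s-1}]\ge\psi(x^*)$ (since $x^K$ is feasible for the original problem, $g(x^K)\le g_K(x^K)\le\eta_K\le\eta$) gives a bound whose numerator is $\psi(x^{s-2})-\psi(x^*)+Z_s$. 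Since $A_s=\tfrac{\gamma-\mu+\rho}{\gamma-\mu}[\psi(x^{s-2})-\psi(x^*)]\ge\psi(x^{s-2})-\psi(x^*)$ whenever $\rho\ge 0$, this implies \eqref{eq:sum-square-3}.

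For \eqref{eq:sum-square-4} I would first derive a per-step estimate on $\norm{x^k-\bar{x}^k}^2$. Since $\bar{x}^k$ minimizes the $(\gamma-\mu)$-strongly convex function $\psi_k$ over the feasible set of the $k$-th subproblem, strong convexity gives $\psi_k(x^k)-\psi_k(\bar{x}^k)\ge\tfrac{\gamma-\mu}{2}\norm{x^k-\bar{x}^k}^2$; combining with \eqref{inexactness} yields
\[
\Ebb\bracket{\norm{x^k-\bar{x}^k}^2|\pi_k} \le \tfrac{\rho}{\gamma-\mu}\norm{x^{k-1}-\bar{x}^k}^2 + \tfrac{2\zeta_k}{\gamma-\mu}.
\]
Summing this from $k=s$ to $K$ under $\Ebb[\,\cdot\,|\pi_{s-1}]$, substituting the bound from \eqref{eq:sum-square-3}, and invoking the algebraic identity $\tfrac{\rho}{(\gamma-\mu)(\gamma-\mu-\rho)}+\tfrac{1}{\gamma-\mu}=\tfrac{1}{\gamma-\mu-\rho}$ to absorb the $\zeta_k$ contributions into the single coefficient $\tfrac{2}{\gamma-\mu-\rho}$ produces \eqref{eq:sum-square-4}.

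The only genuine difficulty is careful bookkeeping: tracking which quantities are $\pi_k$- versus $\pi_{s-1}$-measurable, applying the tower property in the correct direction, and absorbing both the slack in the coefficient $\tfrac{\gamma-\mu+\rho}{\gamma-\mu}$ of $A_s$ and the fact that $Z_s$ is indexed from $s-1$ rather than $s$. No conceptual ingredient beyond \eqref{eq:middle-sum} and the strong-convexity estimate above is needed.
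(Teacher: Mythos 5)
Your argument is correct and rests on the same two ingredients as the paper's proof --- strong convexity of $\psi_k$ evaluated at the feasible point $x^{k-1}$, and the inexactness hypothesis \eqref{inexactness} --- but you organize the telescoping for \eqref{eq:sum-square-3} slightly differently. The paper chains $\psi_k(x^{k-1})=\psi(x^{k-1})\le\psi_{k-1}(x^{k-1})$ back to the $(k-1)$-st subproblem, so that $\psi(\bar{x}^k)$ telescopes and the squared-distance sums appear on both sides with coefficients $\tfrac{2\gamma-\mu}{2}$ and $\tfrac{\gamma+\rho}{2}$ shifted by one index; the leftover boundary term $\tfrac{\gamma+\rho}{2}\norm{x^{s-2}-\bar{x}^{s-1}}^2$ must then be absorbed by one more application of strong convexity, which is exactly where the factor $\tfrac{\gamma-\mu+\rho}{\gamma-\mu}$ in $A_s$ comes from. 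You instead reuse the forward recursion \eqref{eq:middle-sum}, so that $\psi(x^k)$ itself telescopes and the coefficient $\tfrac{\gamma-\mu-\rho}{2}$ appears immediately; the boundary summand at $k=s-1$ is nonnegative and can simply be dropped, and you recover the stated constant because $\psi(x^{s-2})-\psi(x^*)\le A_s$. The one step worth making explicit there is that $\psi(x^{s-2})\ge\psi(x^*)$, which requires feasibility of $x^{s-2}$ for the original problem (it holds since $g(x^{s-2})\le g_{s-2}(x^{s-2})\le\eta_{s-2}<\eta$, the same fact you already invoke for $x^K$); without it the inequality $\psi(x^{s-2})-\psi(x^*)\le A_s$ could fail. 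Your treatment of \eqref{eq:sum-square-4} --- strong convexity at $x^k$, the inexactness bound, substitution of \eqref{eq:sum-square-3}, and the identity $\tfrac{\rho}{(\gamma-\mu)(\gamma-\mu-\rho)}+\tfrac{1}{\gamma-\mu}=\tfrac{1}{\gamma-\mu-\rho}$ --- is identical to the paper's.
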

\begin{proof}
	Note that since for all $k \ge 1$ we have feasibility of $x^k$ for $k$-th subproblem (due to \eqref{eq:subprob_feas}), then in view of Proposition \ref{prop:KKT-subseq}, we have that $x^{k-1}$ is strictly feasible for the $k$-th subproblem. Consequently, using strong convexity of $\psi_{k}$ and optimality of $\wb{x}^k$, we have $\tfrac{\gamma-\mu}{2}\norm{x^{k-1} - \wb{x}^k}^2 \le \psi_{k}(x^{k-1}) - \psi_{k}(\wb{x}^k)$.
	Therefore, taking expectation conditioned on $\pi_{k-1}$ ob both sides of the above relation, we obtain
	\begin{align*}
\tfrac{\gamma-\mu}{2}&\Ebb\bracket{\norm{x^{k-1}-\bar{x}^{k}}^{2}|\pi_{k-1}} \le \Ebb\bracket{ \psi_{k}(x^{k-1})-\psi_{k}(\bar{x}^{k})|\pi_{k-1}}\\
 & \le \Ebb\bracket{\psi_{k-1}(x^{k-1})-\psi_{k}(\bar{x}^{k})|\pi_{k-1}}\\
 &\le\psi_{k-1}(\bar{x}^{k-1})-\Ebb\bracket{\psi_{k}(\bar{x}^{k})|\pi_{k-1}}+\tfrac{\rho}{2}\norm{x^{k-2}-\bar{x}^{k-1}}^{2}+\zeta_{k-1}
\end{align*}
where second inequality follows from $\psi_{k}(x^{k-1})=\psi(x^{k-1})\le\psi_{k-1}(x^{k-1})$
and third inequality follows from \eqref{inexactness}.
Placing the definition of $\psi_{k}(\cdot)$ in above relation, we
have 
\begin{align*}
\tfrac{2\gamma-\mu}{2}\Ebb\bracket{\norm{x^{k-1}-\bar{x}^{k}}^{2}|\pi_{k-1}} &\le\psi(\bar{x}^{k-1})-\Ebb\bracket{\psi(\bar{x}^{k})|\pi_{k-1}}+\tfrac{\gamma+\rho}{2}\norm{x^{k-2}-\bar{x}^{k-1}}^{2}+\zeta_{k-1}.
\end{align*}
Summing up over $k=s,s+1,\dots,K$ and taking expectation conditioned on $\pi_{s-1}$, we have 
\begin{align*}
\tfrac{2\gamma-\mu}{2}\tsum_{k=s}^{K}\Ebb\big[\norm{x^{k-1}-\bar{x}^{k}}^{2} | \psi_{s-1}\big] & \le\psi(\bar{x}^{s-1})-\Ebb\psi(\bar{x}^{K})\\
&\quad+\tfrac{\gamma+\rho}{2}\tsum_{k=s}^{K}\Ebb\big[\norm{x^{k-2}-\bar{x}^{k-1}}^{2} | \pi_{s-1}\big]+\tsum_{k=s}^K\zeta_{k-1}.
\end{align*}
It then follows that 
\begin{align*}
\tfrac{\gamma-\mu-\rho}{2}\Ebb\big[\tsum_{k=s}^{K}\norm{x^{k-1}-\bar{x}^{k}}^{2} | \pi_{s-1}\big] & \le\psi(\bar{x}^{s-1})-\Ebb\,\psi(\bar{x}^{K})+\tfrac{\gamma+\rho}{2}\norm{x^{s-2}-\bar{x}^{s-1}}^{2}+\tsum_{k=s}^K\zeta_{k-1}\\
 & \le\psi_{s-1}(\bar{x}^{s-1})-\Ebb\psi(\bar{x}^{K})\\
 &\qquad+\tfrac{\rho}{\gamma-\mu}\left[\psi_{s-1}(x^{s-2})-\psi_{s-1}(\bar{x}^{s-1})\right]+\tsum_{k=s}^K\zeta_{k-1}\\
 & \le\psi(x^{s-2})-\Ebb\psi(\bar{x}^{K})\\
 &\qquad+\tfrac{\rho}{\gamma-\mu}\left[\psi(x^{s-2})-\psi_{s-1}(\bar{x}^{s-1})\right]+\tsum_{k=s}^K\zeta_{k-1}\\
 & \le \tfrac{\gamma-\mu+\rho}{\gamma-\mu}\left[\psi(x^{s-2})-\psi(x^{*})\right]+\tsum_{k=s}^K\zeta_{k-1},
\end{align*}
where the third and the last inequality follow from the property $$\psi(x^{k-1})=\psi_k(x^{k-1})\ge\psi_{k}(\bar{x}^{k})\ge\psi(\bar{x}^{k})\ge\psi(x^{*}).$$ Note that solution $x^k$ %from AC-SA 
is feasible for the $k$-th subproblem and hence, in view of Proposition \ref{prop:KKT-subseq}, we have that $g(\wb{x}^k) \le g_k(\wb{x}^k) \le \eta_{k} < \eta$ and hence $\wb{x}^k$ is feasible solution for the main problem implying $\psi(\wb{x}^k) \ge \psi(x^*)$ in the above relation.
Then (\ref{eq:sum-square-3}) immediately follows. 

Now we prove that (\ref{eq:sum-square-4}) holds. Note that
\begin{align*}
\Ebb\left[\norm{x^{k}-\bar{x}^{k}}^{2} | \pi_k\right]&\le\tfrac{2}{\gamma-\mu}\Ebb \left[\psi_{k}(x^{k})-\psi_{k}(\bar{x}^{k}) | \pi_k\right] \le\tfrac{2}{\gamma-\mu}\left[\tfrac{\rho}{2}\norm{x^{k-1}-\bar{x}^{k}}^{2} + \zeta_{k}\right],
\end{align*}
where the first inequality follows due to the strong convexity $\psi_{k}$ as well as the optimality of $\wb{x}^k$ and the second inequality follows due to \eqref{inexactness}. Now summing the above relation from $k = s$ to $K$ and taking expectation conditioned on $\psi_{s-1}$, we obtain
\begin{align*}
\Ebb\left[\tsum_{k=s}^{K}\norm{x^{k}-\bar{x}^{k}}^{2} | \pi_{s-1}\right]&\le \tfrac{\rho}{\gamma-\mu}\Ebb\left[\tsum_{k=s}^K\norm{x^{k-1}-\bar{x}^{k}}^{2} | \pi_{s-1}\right] + \tfrac{2}{\gamma-\mu} \tsum_{k=s}^K \zeta_{k}\\
& \le\tfrac{2\rho A_s}{(\gamma-\mu)(\gamma-\mu-\rho)} + \tfrac{2Z_s}{\gamma-\mu-\rho},
\end{align*}
where the last inequality follows from \eqref{eq:sum-square-3} and the definition of $Z_s$. Hence, we conclude the proof.
\end{proof}
Now we present the unified convergence of proximal point as stated in Theorem \ref{thm:complexity_main}.
\begin{proof}[Proof of Theorem \ref{thm:complexity_main}]
	Due to the KKT condition for the subproblem \eqref{subprob}, we have
	\begin{equation}\label{eq:KKT_subprob1}
	\begin{split}
	0 & \in\partial\psi(\bar{x}^{k})+\gamma\left(\bar{x}^{k}-x^{k-1}\right)+\bar{y}^{k}\left(\partial\norm{\bar{x}^{k}}_{1}-\nabla h(x^{k-1})\right)\\
	0 & =\bar{y}^{k}\left(\lambda\norm{\bar{x}^{k}}_{1}-h(x^{k-1})-\inprod{\nabla h(x^{k-1})}{\bar{x}^{k}-x^{k-1}}-\eta_{k}\right)
	\end{split}
	\end{equation}
	Using triangle inequality along with first relation in the above equation, we have $\dis\left(\partial_{x}\Lcal(\bar{x}^{k},\bar{y}^{k}),0\right)\le\gamma\norm{\bar{x}^{k}-x^{k-1}}+\bar{y}^{k}\norm{\nabla h(x^{k-1})-\nabla h(\bar{x}^{k})}$.
	Therefore, noting the bound on $\wb{y}^k$ from Assumption \ref{assu:bound-y-1}, we have
	\begin{align*}
	\dis\left(\partial_{x}\Lcal(\bar{x}^{k},\bar{y}^{k}),0\right)^{2}  &\le 2\gamma^{2}\norm{\bar{x}^{k}-x^{k-1}}^{2}+2B^{2}\norm{\nabla h(x^{k-1})-h(\bar{x}^{k})}^{2}\\
	& \le 2\left(\gamma^{2}+B^{2}L_{h}^{2}\right)\norm{\bar{x}^{k}-x^{k-1}}^{2},
	\end{align*}
	where the second inequality uses Lipschitz smoothness of $h(x)$. Summing the above relation from $k = s, \dots, K$ and the taking expectation conditioned on $\pi_{s-1}$ on both sides, we obtain
	\begin{align}
	\Ebb\left[\tsum_{k=s}^{K}\dis\left(\partial_{x}\Lcal(\bar{x}^{k},\bar{y}^{k}),0\right)^{2}|\pi_{s-1}\right]& \le 2(\gamma^2+B^2L_h^2) \Ebb\left[\tsum_{k=s}^{K} \norm{x^{k-1}-\bar{x}^{k}}^{2} |\pi_{s-1}\right]\nonumber\\
	& \le \tfrac{4\left(\gamma^{2}+B^{2}L_{h}^{2}\right)}{\gamma-\mu-\rho}(A_s+ Z_s),\label{eq:int_rel4}
	\end{align}
	For the complementary slackness part of the KKT condition, first notice that $\eta_{k}=\eta_{0}+\tsum_{t=1}^{k}\delta_{t}=\eta_{0}+\tsum_{t=1}^{k}\tfrac{\eta-\eta_{0}}{t(t+1)}=\tfrac{k}{k+1}\eta+\tfrac{1}{k+1}\eta_{0}$.
	Therefore, 
	\[
	\tsum_{k=s}^{K}\left(\eta-\eta_{k}\right)=\tsum_{k=s}^{K}\tfrac{\eta-\eta_{0}}{k+1}\le\tfrac{K+1-s}{s+1}(\eta-\eta_{0}).
	\]
	To prove the error of complementary slackness condition, observe that
	\begin{align*}
	\bar{y}^{k}\left|\lambda\norm{\bar{x}^{k}}_{1}-h(\bar{x}^{k})-\eta\right| & \le\bar{y}^{k}\left|\lambda\norm{\bar{x}^{k}}_{1}-h(x^{k-1})-\inprod{\nabla h(x^{k-1})}{\bar{x}^{k}-x^{k-1}}-\eta_k\right|\\
	& \quad+\bar{y}^{k}\left|h(x^{k-1})+\inprod{\nabla h(x^{k-1})}{\bar{x}^{k}-x^{k-1}}-h(\bar{x}^{k})\right|+\bar{y}^{k}\left(\eta-\eta_{k}\right)\\
	& \le\tfrac{BL_{h}}{2}\norm{\bar{x}^{k}-x^{k-1}}^{2}+B\left(\eta-\eta_{k}\right),
	\end{align*}
	where second inequality follows due to second relation in \eqref{eq:KKT_subprob1} and bound on $\wb{y}^k$ from Assumption \ref{assu:bound-y-1}. Summing the above relation from $k = s, \dots, K$ and taking expectation conditioned on $\pi_{s-1}$ on both sides, we obtain 
	\begin{align}
	\Ebb\left[\tsum_{k=s}^{K}\bar{y}^{k}\left|g(\bar{x}^k)-\eta\right||\pi_{s-1}\right] & \le\tsum_{k=s}^{K}\Ebb \left[\tfrac{BL_{h}}{2}\norm{\bar{x}^{k}-x^{k-1}}^{2}+B\left(\eta-\eta_{k}\right)|\pi_{s-1}\right]\nonumber\\
	& \le\tfrac{BL_{h}}{2}\Ebb \left[\tsum_{k=s}^{K}\norm{\bar{x}^{k}-x^{k-1}}^{2} | \psi_{s-1}\right]+B\tsum_{k=s}^{K}\left(\eta-\eta_{k}\right)\nonumber\\
	& \le\tfrac{BL_{h}}{\gamma-\mu-\rho}(A_s+Z_s)+\tfrac{\left(K+1-s\right)B(\eta-\eta_0)}{s+1}.\label{eq:int_rel6}
	\end{align}
	Now note that $A_s = \tfrac{\gamma-\mu+\rho}{\gamma-\mu} [\psi(x^{s-2})- \psi(x^*)]$ is a random variable due to randomness of $x^{s-2}$.  Now we bound expectation of $\psi(x^{s-2})$.
	In view of \eqref{inexactness}, we have
	\begin{align*}
	\Ebb[\psi_k(x^k) | \pi_k] &\le \psi_k(\wb{x}^k) + \tfrac{\rho}{2}\norm{x^{k-1}-\wb{x}^k}^2 + \zeta_{k}\\
	&\le \psi_k(x^{k-1}) - \tfrac{\gamma-\mu-\rho}{2}\norm{x^{k-1}-\wb{x}^k} + \zeta_{k}
	\end{align*}
	Since, $\gamma-\mu-\rho \ge 0$ and noting that $\psi_{k}(x^{k-1}) = \psi(x^{k-1})$, $\psi_{k}(x^k) \ge \psi(x^k)$, we have
	\[\Ebb[\psi(x^k)|\pi_k] \le \psi(x^{k-1}) + \zeta_{k}. \]
	Taking expectation on both sides of the above relation and then summing from $k = 1$ to $s-2$, we get
	\[ \Ebb[\psi(x^{s-2})] \le \psi(x^0) + \tsum_{k=1}^{s-2}\zeta_{k}.\]
	Using the above relation, we obtain 
	\begin{equation}\label{eq:int_rel5}
	\Ebb[A_s] \le \tfrac{\gamma-\mu + \rho}{\gamma-\mu}\Delta^0 + 2\tsum_{k=1}^{s-2}\zeta_{k},
	\end{equation}
	where $\Delta^0 = \psi(x^0)-\psi(x^*)$.
	Note that here we used the fact $\tfrac{\gamma-\mu+\rho}{\gamma-\mu} \le 2$. 
	Now taking expectation on both sides of \eqref{eq:int_rel4} and using bound on $\Ebb[A_s]$ in \eqref{eq:int_rel5}, we obtain
	\begin{align*}
	\Ebb\left[\tsum_{k=s}^{K}\dis\left(\partial_{x}\Lcal(\bar{x}^{k},\bar{y}^{k}),0\right)^{2}|\pi_{s-1}\right] &\le \tfrac{4(\gamma^2 + B^2L_h^2)}{\gamma-\mu-\rho}\big(\tfrac{\gamma-\mu + \rho}{\gamma-\mu}\Delta^0  + 2\tsum_{k=1}^{s-2}\zeta_k + \tsum_{k=s-1}^K\zeta_{k}\big)\\
	&\le\tfrac{4(\gamma^2 + B^2L_h^2)}{\gamma-\mu-\rho}\big(\tfrac{\gamma-\mu + \rho}{\gamma-\mu}\Delta^0 + 2Z_1\big).
	\end{align*}
	Similarly, taking expectation on both sides of \eqref{eq:int_rel6} and using \eqref{eq:int_rel5}, we obtain
	\begin{align*}
	\Ebb\left[\tsum_{k=s}^{K}\bar{y}^{k}\left|g(\bar{x}^k)-\eta\right||\pi_{s-1}\right] &\le \tfrac{BL_h}{\gamma-\mu-\rho}\big(\tfrac{\gamma-\mu + \rho}{\gamma-\mu}\Delta^0 + 2Z_1\big) + \tfrac{K+1-s}{s+1}B(\eta-\eta_{0}).
	\end{align*}
	Taking expectation on both sides of \eqref{eq:sum-square-4} and using \eqref{eq:int_rel5}, we obtain 
	\begin{align*}
	\Ebb\bracket{\tsum_{k=s}^{K}\norm{x^{k}-\bar{x}^{k}}^{2} } &\le \tfrac{2\rho}{(\gamma-\mu)(\gamma-\mu-\rho)}\big( \tfrac{\gamma-\mu+\rho}{\gamma-\mu}\Delta^0 + 2\tsum_{k=1}^{s-2}\zeta_{k} \big) + \tfrac{2Z_s}{\gamma-\mu-\rho}\\
	&\le \tfrac{2\rho(\gamma-\mu+\rho)}{(\gamma-\mu)^2(\gamma-\mu-\rho)}\Delta^0 + \tfrac{4Z_1}{\gamma-\mu-\rho}.
	\end{align*}
	Finally, setting $s = \left\lfloor\tfrac{K+1}{2}\right\rfloor$, we have $\tfrac{K}{2} \le s \le \tfrac{K+1}{2}$. Therefore, we have
	\begin{align*}
	\Ebb_{\hat{k}}\left[ \dis\left(\partial_{x}\Lcal(\bar{x}^{\hat{k}},\bar{y}^{\hat{k}}),0\right)^{2}\right] &\le \tfrac{8(\gamma^2 + B^2L_h^2)}{K(\gamma-\mu-\rho)}\big(\tfrac{\gamma-\mu + \rho}{\gamma-\mu}\Delta^0 + 2Z_1\big),\\
	\Ebb_{\hat{k}}\left[\bar{y}^{\hat{k}}\left|g(\bar{x}^{\hat{k}})-\eta\right|\right] & \le \tfrac{2BL_h}{K(\gamma-\mu-\rho)}\big(\tfrac{\gamma-\mu + \rho}{\gamma-\mu}\Delta^0 + 2Z_1\big) + \tfrac{2B(\eta-\eta_{0})}{K},	
	\end{align*}
	and
	\begin{align*}
	&\Ebb_{\hat{k}}\norm{x^{\hat{k}}-\bar{x}^{\hat{k}}}^{2}\le \tfrac{4\rho(\gamma-\mu+\rho)}{K(\gamma-\mu)^2(\gamma-\mu-\rho)}\Delta^0 + \tfrac{8Z_1}{K(\gamma-\mu-\rho)}.
	\end{align*}

Hence, we conclude the proof.
\end{proof}

\subsection{Proof of Corollary \ref{cor:unified_complexity_main}}
Since $T_k \ge 2\sqrt{\tfrac{L}{\mu}+3}$, we have that $\tfrac{2(L+\gamma)}{T_k^2} = \tfrac{2(L+3\mu)}{T_k^2} \le \tfrac{\mu}{2} = \tfrac{\rho}{2}$. Moreover, we see that $\rho = \mu \le \gamma- \mu = 2\mu$. Finally, since $T_k \ge K(M+\sigma)$ so we have $\zeta_{k} \le \tfrac{4}{\mu K}$ implying that $Z_1 = \tsum_{k=1}^K\zeta_{k} \le \tfrac{4}{\mu}$. Then, applying Theorem \ref{thm:complexity_main}, we obtain that $x^{\hat{k}}$ is an $(\vep_1, \vep_2)$-KKT solution of the problem \eqref{noncvx-constraint}.
\subsection{Convergence for the (stochastic) convex case}
We have the following Corollary of Theorem \ref{thm:complexity_main} for the case in which objective $\psi$ is convex, i.e. $\mu = 0$.
\begin{corollary}\label{cor:unified_complexity_convex}
	Let $\psi$ be convex function such that it satisfies \eqref{eq:low_curv_psi} with $\mu  = 0$. Set $\gamma = \beta L$ where $\beta \in [0, 1)$ be a small constant and run AC-SA for $T_k = \max\{2\sqrt{\tfrac{2(1+\beta)}{\beta}}, K(M+\sigma)\}$ iterations where $K$ is the total number of iterations of Algorithm \ref{alg:main}. Then, $x^{\hat{k}}$ is an $(\vep_1, \vep_2)$-KKT point of the problem \eqref{noncvx-constraint} where
	\[\vep_1 = \big(\tfrac{3\Delta^0}{2K}+ \tfrac{16(M+\sigma)}{\beta KL}\big)\max\{\tfrac{16(\beta^2L^2 + B^2L_h^2)}{\beta L}, \tfrac{4BL_h}{\beta L}\} + \tfrac{2B(\eta-\eta_{0})}{K} ,\]
	\[\vep_2 = \tfrac{3\Delta^0}{2\beta LK} + \tfrac{128(M+\sigma)}{\beta L^2K}.\]
\end{corollary}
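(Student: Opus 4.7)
The plan is to reduce this convex-case corollary directly to the general complexity result in Theorem \ref{thm:complexity_main}, exactly paralleling the reasoning behind Corollary \ref{cor:unified_complexity_main} but with $\mu=0$. All I really need to do is verify the two hypotheses \eqref{inexactness} and \eqref{eq:subprob_feas} for the AC-SA output with a legal choice of $\rho \in [0,\gamma-\mu)$ and a summable error sequence $\{\zeta_k\}$, and then arithmetic finishes the job.

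First I would choose $\rho = \gamma/2 = \beta L/2$, which lies in $[0,\gamma)= [0,\beta L)$ since $\beta>0$, satisfying the admissibility condition in Theorem \ref{thm:complexity_main}. Applying Proposition \ref{prop:AC-SA} to the $k$-th subproblem yields
\[
\Ebb[\psi_k(x^k)-\psi_k(\wb{x}^k)\mid \pi_k]\le \tfrac{2(L+\gamma)}{T_k^2}\|x^{k-1}-\wb{x}^k\|^2 + \tfrac{8(M^2+\sigma^2)}{\gamma T_k}.
\]
I would verify that the first coefficient is at most $\rho/2$: since $T_k\ge 2\sqrt{2(1+\beta)/\beta}$, one has $T_k^2\ge 8(1+\beta)/\beta$, so $\tfrac{2(L+\gamma)}{T_k^2}=\tfrac{2L(1+\beta)}{T_k^2}\le \tfrac{\beta L}{4}=\tfrac{\rho}{2}$, matching the form of \eqref{inexactness} with $\zeta_k:=\tfrac{8(M^2+\sigma^2)}{\gamma T_k}=\tfrac{8(M^2+\sigma^2)}{\beta L\, T_k}$. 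The feasibility requirement \eqref{eq:subprob_feas} is built into Proposition \ref{prop:AC-SA}.

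Next I would bound $Z_1=\sum_{k=1}^K\zeta_k$. Using $T_k\ge K(M+\sigma)$ together with $M^2+\sigma^2\le (M+\sigma)^2$, I get $\zeta_k\le \tfrac{8(M+\sigma)}{\beta L K}$, so $Z_1\le \tfrac{8(M+\sigma)}{\beta L}$. With this in hand I plug into the three conclusions of Theorem \ref{thm:complexity_main}, using the shorthand $\gamma-\mu=\beta L$, $\gamma-\mu-\rho=\beta L/2$, and $\gamma-\mu+\rho=3\beta L/2$. The stationarity bound becomes a constant multiple of $\tfrac{\beta^2L^2+B^2L_h^2}{\beta LK}\bigl(\tfrac{3}{2}\Delta^0+\tfrac{16(M+\sigma)}{\beta L}\bigr)$, the complementary-slackness bound becomes a constant multiple of $\tfrac{BL_h}{\beta LK}\bigl(\tfrac{3}{2}\Delta^0+\tfrac{16(M+\sigma)}{\beta L}\bigr)+\tfrac{2B(\eta-\eta_0)}{K}$, and the distance bound becomes $\tfrac{3\Delta^0}{2\beta L K}+O\bigl(\tfrac{M+\sigma}{\beta L^2 K}\bigr)$. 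Grouping the first two into a single $\max$ gives the claimed $\vep_1$; the third gives $\vep_2$.

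There is no real conceptual obstacle here — all the hard work (bounded-dual asymptotics, the per-iteration descent identity, and the choice of $\delta_k=\tfrac{\eta-\eta_0}{k(k+1)}$) was done in Theorem \ref{thm:complexity_main}. The one delicate bookkeeping point, and the reason the two-pronged $T_k=\max\{2\sqrt{2(1+\beta)/\beta},K(M+\sigma)\}$ appears, is that $T_k$ must simultaneously be large enough so that (a) the AC-SA smooth error absorbs into $\rho/2$ with a legal $\rho<\gamma-\mu$, and (b) the non-smooth/stochastic residual $\zeta_k$ is $O(1/K)$ so that $Z_1$ stays bounded uniformly in $K$. Once both are achieved, the structural bounds transfer verbatim from Theorem \ref{thm:complexity_main}, exactly as in the proof of Corollary \ref{cor:unified_complexity_main}.
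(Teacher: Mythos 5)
Your proposal is correct and follows essentially the same route as the paper's own proof: choose $\rho=\beta L/2$, verify $\tfrac{2(L+\gamma)}{T_k^2}\le\tfrac{\beta L}{4}=\tfrac{\rho}{2}$ and $Z_1\le\tfrac{8(M+\sigma)}{\beta L}$ from the two prongs of $T_k$, then invoke Theorem \ref{thm:complexity_main}. The extra explanatory remarks about why $T_k$ is a max of two terms are accurate but not needed beyond what the paper already does.
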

\begin{proof}
	Since $T_k \ge 2\sqrt{\tfrac{2(1+\beta)}{\beta}}$, we have $\tfrac{2(L+\gamma)}{T_k^2} = \tfrac{2(1+\beta)L}{T_k^2} \le \tfrac{\beta L}{4} = \tfrac{\rho}{2}$. Moreover, note that $\rho = \tfrac{\beta L}{2} \le \gamma = \beta L$. Finally, since $T_k \ge K(M+\sigma)$ so we have $\zeta_{k} = \tfrac{8(M^2+\sigma^2)}{\gamma T_k} \le \tfrac{8(M+\sigma)}{\beta L K}$. Hence, $Z_1 = \tsum_{k=1}^K\zeta_{k} \le \tfrac{8(M+\sigma)}{\beta L}$. Then, applying Theorem \ref{thm:complexity_main}, we obtain that $x^{\hat{k}}$ is an $(\vep_1, \vep_2)$-KKT solution of problem \eqref{noncvx-constraint}.
\end{proof}
\paragraph*{Finite-sum problem}
A special case of objective takes the finite-sum form $f(x)=\tfrac{1}{n}\tsum_{i=1}^n \wtil{f}_i(x)$ thereby leading to the following subproblem
\[
\min_x \wtil{\psi}(x)=\tfrac{1}{n}\tsum_{i=1}^n \wtil{f}_i(x) + \wtil{\omega}(x)
\]
It is known that finite-sum problem can be efficiently solved by using variance reduction or randomized incremental gradient method \cite{xiao2014proximal,lan2019a}. The complexity of  LCPP on finite-sum problem can be further improved if we apply variance reduction technique for solving the subproblem.
We comment on the complexity result in brief.
In the finite-sum setting, the Nesterov's accelerated gradient-based LCPP requires  $T_k=\wtil{\Ocal}(n\sqrt{\tfrac{L+2\mu}{\mu}})$ and $T_k=\wtil{\Ocal}(n\beta^{-1/2})$ number of stochastic gradient computations to solve each LCPP subproblem. Even though this number is a constant in terms of dependence on $K$, number of terms ($n$) in the finite sum can be large. In comparison to these standard methods, the complexity of SVRG (stochastic variance reduced gradient) based LCPP method can be improved to $T_k=\wtil{\Ocal}(n+{\tfrac{L+\mu}{\mu}})$ for the case when $\psi$ is nonconvex satisfying \eqref{eq:low_curv_psi} with $\mu > 0$, and to $T_k=\wtil{\Ocal}(n+\beta^{-1})$ for convex problem where $\mu = 0$. 
\section{Proof for the projection algorithm for problem \eqref{eq:prox}}\label{apx:prox_oracle}
%\section{Efficient projection\label{sec:projection}}
%Here, we describe an efficient algorithm for solving the \eqref{eq:prox}. 
We formulate the update as the following problem
\begin{equation}\label{projection1}
\min_{x \in \Rbb^d }\, \tfrac{1}{2}\norm{x-v}^2 \ {\text{s.t.}}\ \norm{x}_1+\inprod{u}{x}\le \tau.
\end{equation}
Since the objective is strongly convex, problem \eqref{projection1} has a unique global optimal solution.
Moreover,  the problem is strictly feasible because of the strict feasibility guarantee (\ref{prop:KKT-subseq}) in the context of problem \eqref{subprob}. 
Therefore, KKT condition guarantees that there exists $y\ge 0$ such that
\begin{align}
0 & \in x-v+yu+y\partial\norm x_{1},\label{eq:grad-kkt}\\
0 & =y\left(\inprod ux+\norm x_{1}-\tau\right).\label{eq:cs-kkt}
\end{align}

The algorithm proceeds as follows.
First, we check whether $v$ is feasible, if it is the case, then
$x=v$ is the optimal solution. 
Otherwise, the constraint in \eqref{projection1} is active. Next, we explore the optimality condition \eqref{eq:grad-kkt}.
Given the optimal Lagrangian multiplier $y\ge0$, for the $i$-th coordinate of the optimal $x$, one of the following three situations will 
occur:
\begin{enumerate}
	\item $x_{i}>0$ and $x_{i}=v_{i}-(u_{i}+1)y$.
	\item $x_{i}<0$ and $x_{i}=v_{i}-(u_{i}-1)y$.
	\item $x_{i}=0$ and $(u_i-1)y\le v_{i}\le (u_{i}+1)y$. 
\end{enumerate}
For simplicity, let us denote  $[a]_+=\max\{a,0\}$ and $[a,b]_+=\max\{a, b, 0\}$. 
Based on the discussion above, we can express $x$ as a piecewise linear function of $y$. 
\[
x_{i}(y)=\left[v_{i}-(u_{i}+1)y\right]_{+}-\left[(u_{i}-1)y-v_{i}\right]_{+}.
\]
Let us denote $\ell(y) =\inprod u{x(y)}+\norm{x(y)}_{1}$.
We can deduce that 
\begin{align*}
\ell(y)  &=\tsum_{i=1}^d u_i x_i(y)+\tsum_{i=1}^d \max\{x_i(y), -x_i(y)\}\\	
& = \tsum_{i=1}^d u_i \left[v_{i}-(u_{i}+1)y\right]_{+} - \tsum_{i=1}^d u_i \left[(u_{i}-1)y-v_{i}\right]_{+} \\
& \quad + 2\tsum_{i=1}^d [v_{i}-(u_{i}+1)y, (u_{i}-1)y-v_{i}]_+ \\
& \quad - \tsum_{i=1}^d  \left[v_{i}-(u_{i}+1)y\right]_{+} -  \tsum_{i=1}^d \left[(u_{i}-1)y-v_{i}\right]_{+}\\
& = \tsum_{i=1}^d (u_i-1) \left[v_{i}-(u_{i}+1)y\right]_{+} \\
&\quad- \tsum_{i=1}^d (u_i+1) \left[(u_{i}-1)y-v_{i}\right]_{+} \\
& \quad +2\tsum_{i=1}^d [v_{i}-(u_{i}+1)y, (u_{i}-1)y-v_{i}]_+ 
\end{align*}
Above,  the second equality uses the identity: $\max\{p-q, q-p\}=2\max\{p, q\}- p-q$ for any $p,q\in\Rbb$.
It can be readily seen that $\ell(y)$ is a piecewise linear function with at most $3d$ breaking points.
We can sort these  points in $\Ocal(d\log d)$ and then apply a line-search to find the root of $\ell(\cdot)=\tau$ in $\Ocal(d)$ time.

\section{Supermartingale convergence theorem}
In below, we state a version of supermartingale convergence theorem
developed by \cite{robbins1985a}.
\begin{thm}
\label{thm:supermartingale}Let $(\Omega,F,P)$ be a probability space
and $\Fcal_{0}\subseteq\Fcal_{1}\subseteq...\subseteq\Fcal_{k}\subseteq$
be some sub-$\sigma$-algebra of $F$. Let $b_{k}$, $c_{k}$ be nonnegative
$\Fcal_{k}$-measurable random variables such that 
\[
\Ebb\left[b_{k+1}\mid\Fcal_{k}\right]\le b_{k}+\xi_{k}-c_{k},
\]
where $\{\xi_{k}\}_{0\le k<\infty}$ is a non-negative and summable:
$\tsum_{k=0}^{\infty}\xi_{k}<+\infty$. Then we have 
\[
\lim_{k\raw\infty}b_{k}\text{ exists, and }\tsum_{k=1}^{\infty}c_{k}<+\infty,\quad a.s.
\]
\end{thm}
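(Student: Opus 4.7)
The approach is to absorb the positive drift $\xi_k$ into the process to obtain an honest supermartingale, then apply Doob's almost-sure convergence theorem for supermartingales bounded below. Define the shifted process
\[
v_k := b_k - \tsum_{j=0}^{k-1}\xi_j, \qquad k\ge 0,
\]
which is $\Fcal_k$-measurable. Taking conditional expectations and using the hypothesis,
\[
\Ebb[v_{k+1}\mid\Fcal_k] = \Ebb[b_{k+1}\mid\Fcal_k] - \tsum_{j=0}^{k}\xi_j \le b_k + \xi_k - c_k - \tsum_{j=0}^{k}\xi_j = v_k - c_k \le v_k,
\]
so $\{v_k\}$ is a supermartingale whose one-step decrement absorbs $c_k$.

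The main obstacle is that $v_k$ is not bounded below by any deterministic constant, since we only know $\tsum_j\xi_j<\infty$ almost surely and not in expectation, so Doob's theorem does not apply directly. My plan is to handle this by localization. For each integer $n\ge 1$ define the stopping time
\[
\tau_n := \inf\Bigl\{k\ge 0 : \tsum_{j=0}^{k-1}\xi_j > n\Bigr\},
\]
with $\inf\emptyset=\infty$. Because $\tsum_j\xi_j<\infty$ almost surely, $\tau_n\uparrow\infty$ almost surely, so $\bigcup_n\{\tau_n=\infty\}$ has full measure. For $k\le \tau_n$ we have $\tsum_{j<k}\xi_j\le n$, and hence $v_{k\wedge\tau_n}\ge -n$. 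Stopping preserves the supermartingale property, so $v^{\tau_n}_k := v_{k\wedge\tau_n}$ is a supermartingale and $v^{\tau_n}_k + n$ is a nonnegative supermartingale (assuming $\Ebb[b_0]<\infty$, which can be arranged by a further layer of localization on $\{b_0\le m\}$ and union over $m$). Doob's convergence theorem then gives existence of a finite almost sure limit $\lim_{k\to\infty} v^{\tau_n}_k$.

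Restricting to $\{\tau_n=\infty\}$, the stopped process agrees with $v_k$ for every $k$, so $\lim_k v_k$ exists almost surely on $\{\tau_n=\infty\}$; taking the union over $n$ yields almost sure convergence of $v_k$ on all of $\Omega$. Since $\tsum_{j=0}^{k-1}\xi_j$ converges almost surely to the finite limit $\tsum_{j=0}^\infty\xi_j$, subtracting gives that $\lim_k b_k$ exists and is finite almost surely, which is the first conclusion. For the second conclusion, summing the supermartingale inequality and taking expectations yields
\[
\Ebb\bigl[v^{\tau_n}_{k+1}\bigr] \le \Ebb[v_0] - \Ebb\Bigl[\tsum_{j=0}^{(k\wedge\tau_n)-1} c_j\Bigr];
\]
combining with the lower bound $v^{\tau_n}_{k+1}\ge -n$ and passing $k\to\infty$ via monotone convergence gives $\Ebb\bigl[\tsum_{j=0}^{\tau_n - 1} c_j\bigr] \le \Ebb[b_0] + n <\infty$, so $\tsum_{j<\tau_n}c_j<\infty$ almost surely, and letting $n\to\infty$ on the full-measure set $\bigcup_n\{\tau_n=\infty\}$ gives $\tsum_{j=1}^\infty c_j <\infty$ almost surely, completing the argument.
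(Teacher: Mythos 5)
The paper does not actually prove this statement: it is quoted as a known result of Robbins and Siegmund, so there is no in-paper argument to compare against. Your proof is a correct, essentially standard self-contained derivation of that lemma --- shift by the accumulated drift to form $v_k=b_k-\tsum_{j<k}\xi_j$, which the hypothesis makes a supermartingale with one-step decrement at least $c_k$, localize to make it bounded below, apply Doob's convergence theorem, and recover $\tsum_k c_k<\infty$ from the telescoped expectation bound. Two remarks. First, there is an off-by-one in the localization: with $\tau_n=\inf\{k:\tsum_{j=0}^{k-1}\xi_j>n\}$, the bound $\tsum_{j<k}\xi_j\le n$ holds for $k<\tau_n$ but fails at $k=\tau_n$ itself, where by definition the partial sum exceeds $n$; hence $v_{k\wedge\tau_n}\ge-n$ is not quite true as stated, and the Doob step leans on it. The fix is to set $\tau_n=\inf\{k:\tsum_{j=0}^{k}\xi_j>n\}$ (still a stopping time when $\xi_k$ is $\Fcal_k$-measurable), after which $\tsum_{j=0}^{k-1}\xi_j\le n$ does hold for every $k\le\tau_n$ and the rest of your argument goes through verbatim. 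Second, in the only way the theorem is invoked in this paper (Proposition \ref{prop:square-sum-2}) the sequence $\{\xi_k\}$ is a deterministic summable sequence, so $v_k\ge-\tsum_{j=0}^{\infty}\xi_j$ uniformly, the recursion propagates integrability from $\Ebb[b_0]<\infty$, and the entire localization machinery (both in $n$ and in the auxiliary truncation on $\{b_0\le m\}$) can be dropped; what your more elaborate argument buys is the fully general version with adapted, possibly non-integrable random $\xi_k$.
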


\newpage
\section{Additional experiments}\label{apx:added_exp}
This section describes additional experiments for investigating the empirical performance of LCPP. We run all the algorithms on a cluster node with Intel Xeon Gold 2.6G CPU and 128G RAM.
\subsubsection*{Solving the subproblems}
We compare the performance
of different instances of LCPP for which the subproblems are solved by
a variety of convex algorithms. Specifically, we consider LCPP-SVRG, LCPP-SGD,
LCPP-NAG and LCPP-BB in which the subproblems are solved by proximal
stochastic variance reduced gradient descent (SVRG \cite{xiao2014proximal}), proximal stochastic gradient descent (SGD), Nesterov's accelerated
gradient (NAG\cite{RN179}) and spectral gradient (Barzilai-Borwein stepsize)  respectively.
We adopt the spectral gradient descent with non-monotone line search from \cite{gong2013a} due to its superior  performance in the reported experiments.
%In this supplementary section, we also add one more instance of LCPP---LCPP-BB which uses gradient descent with Barzilai-Borwein stepsize and nonmonotone line search. 
%While the theoretical properties of Barzilai-Borwein stepsize \cite{barzilai1988two} is limited,
%many recent work \cite{wen2010fast, gong2013a} confirmed the empirical advantage of BB stepsize with non-monotone linesearch in signal processing and machine learning. We refer to \cite{gong2013a} for details to implement proximal gradient with BB stepsize.
%We immediately observe
%that LCPP-SGD outperforms LCPP-NAG. While the iteration complexity
%of LCPP-NAG ($\mathcal{O}(\tfrac{1}{\varepsilon})$) is better than
%that of LCPP-SGD ($\mathcal{O}(\tfrac{1}{\varepsilon^{2}})$), each
%iteration of NAG takes $\mathcal{O}(n)$ time more than that of stochastic
%gradient, making LCPP-NAG less efficient than LCPP-SGD. Furthermore,
%we observe that LCPP-SVRG achieves the best performance among all
%the compared algorithms, confirming that variance reduction technique
%is more advantageous than gradient descent or classic stochastic gradient
%for large scale optimization. Finally, we notice that LCPP-BB has
%the best performance among all the algorithms despite its non-monotonicity.

\begin{figure}[h]
	\includegraphics[scale=0.35]{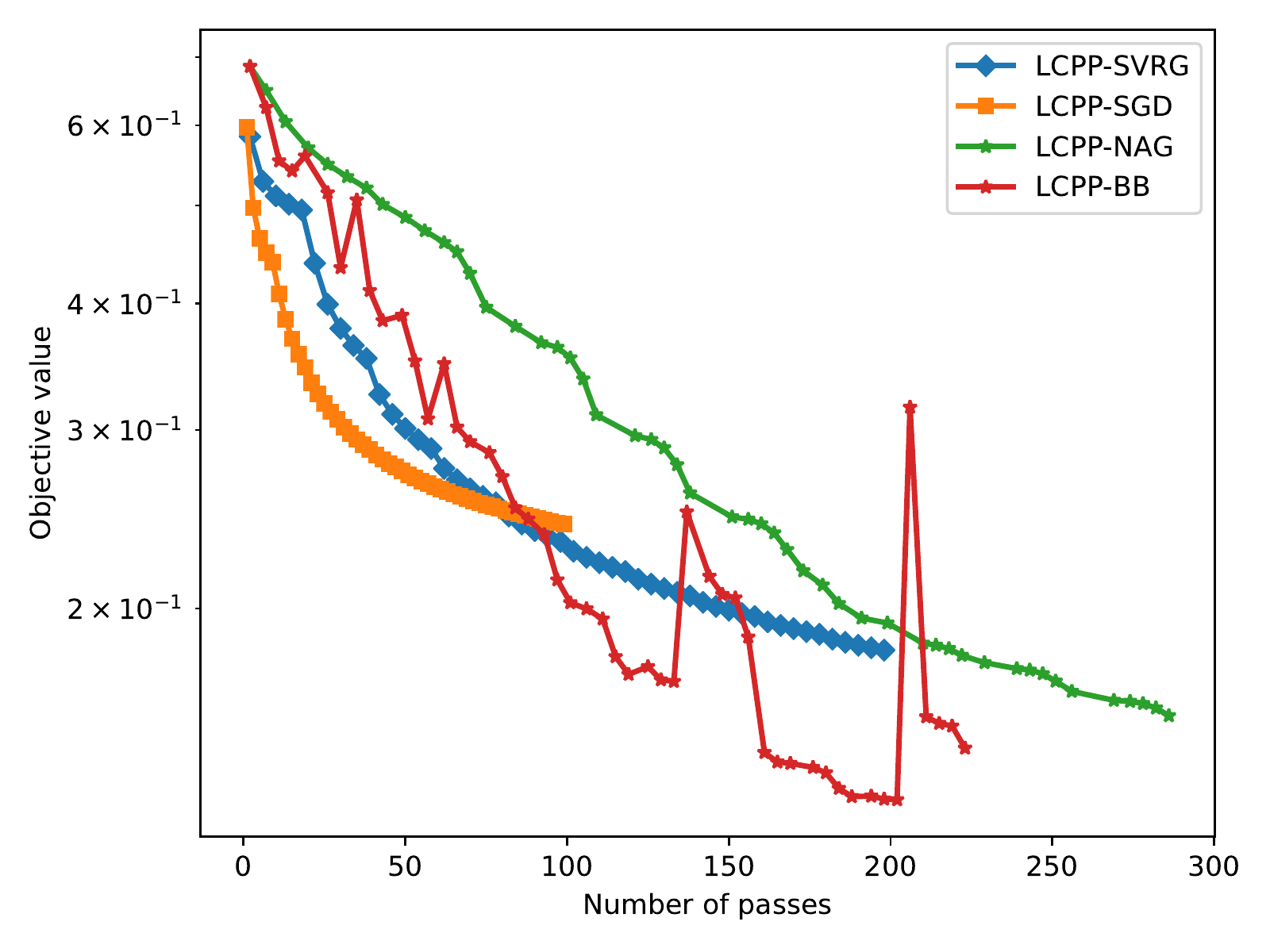}
	\includegraphics[scale=0.35]{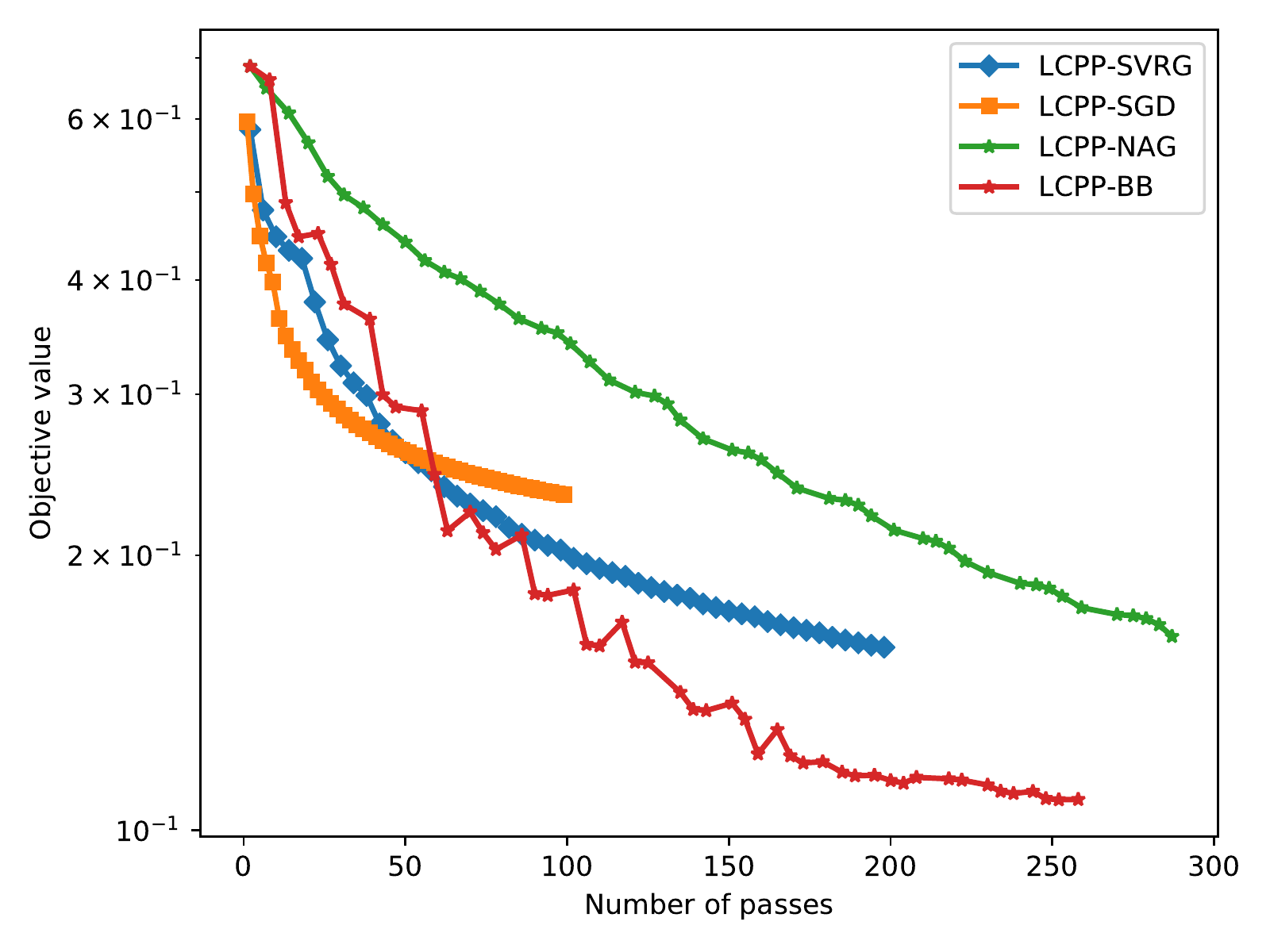}
	
	\includegraphics[scale=0.35]{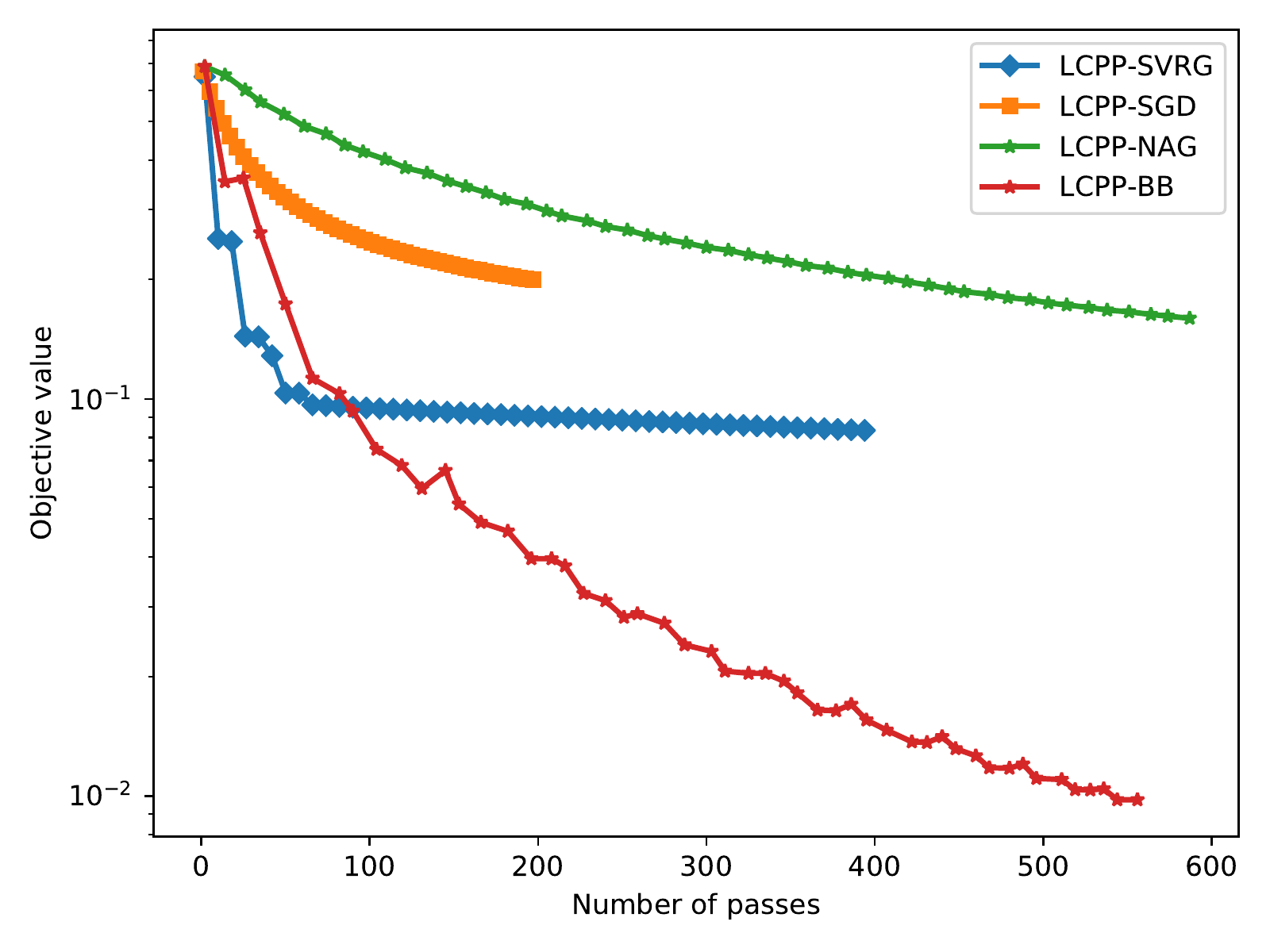}
	\includegraphics[scale=0.35]{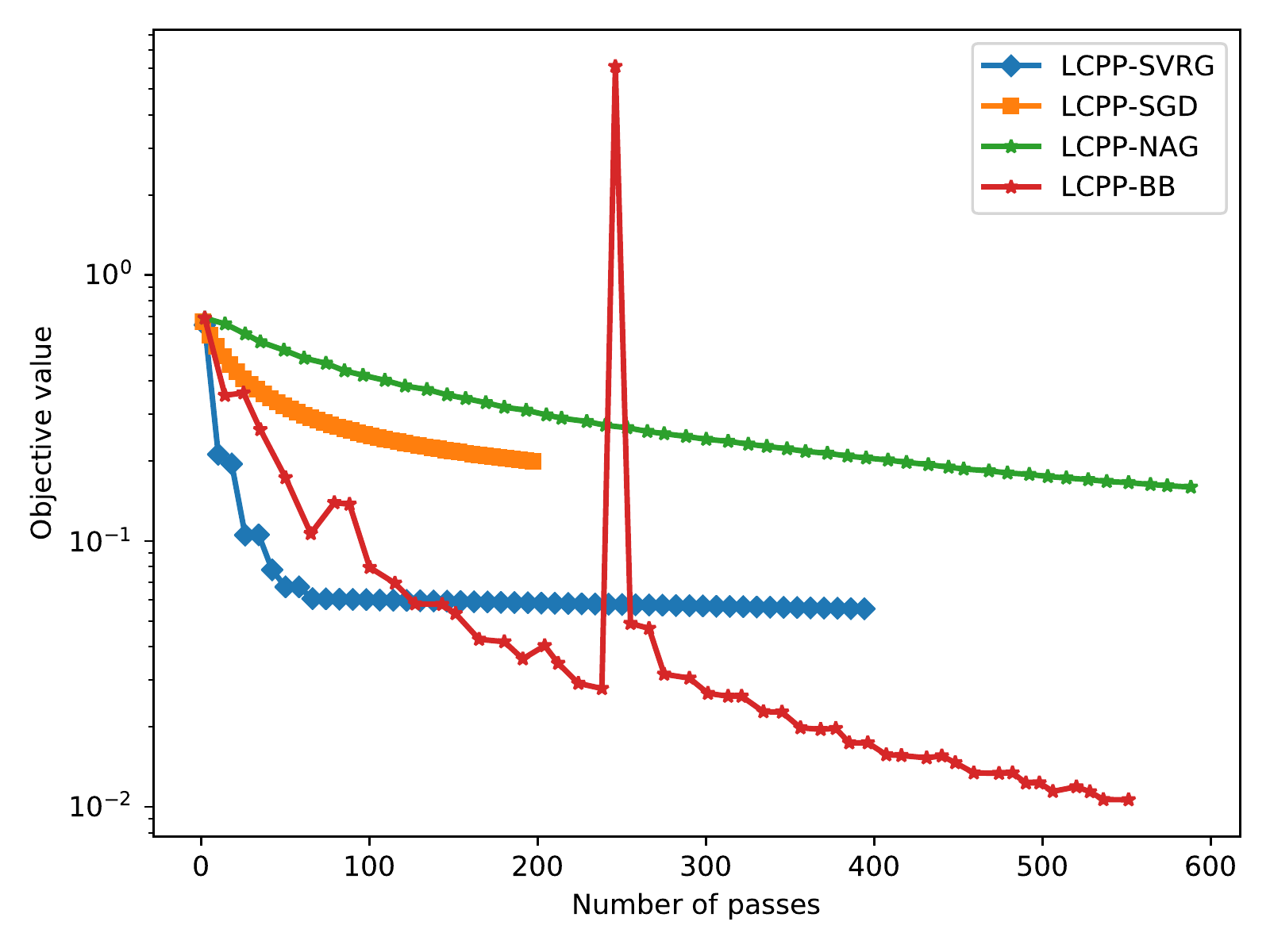}
	
	\includegraphics[scale=0.35]{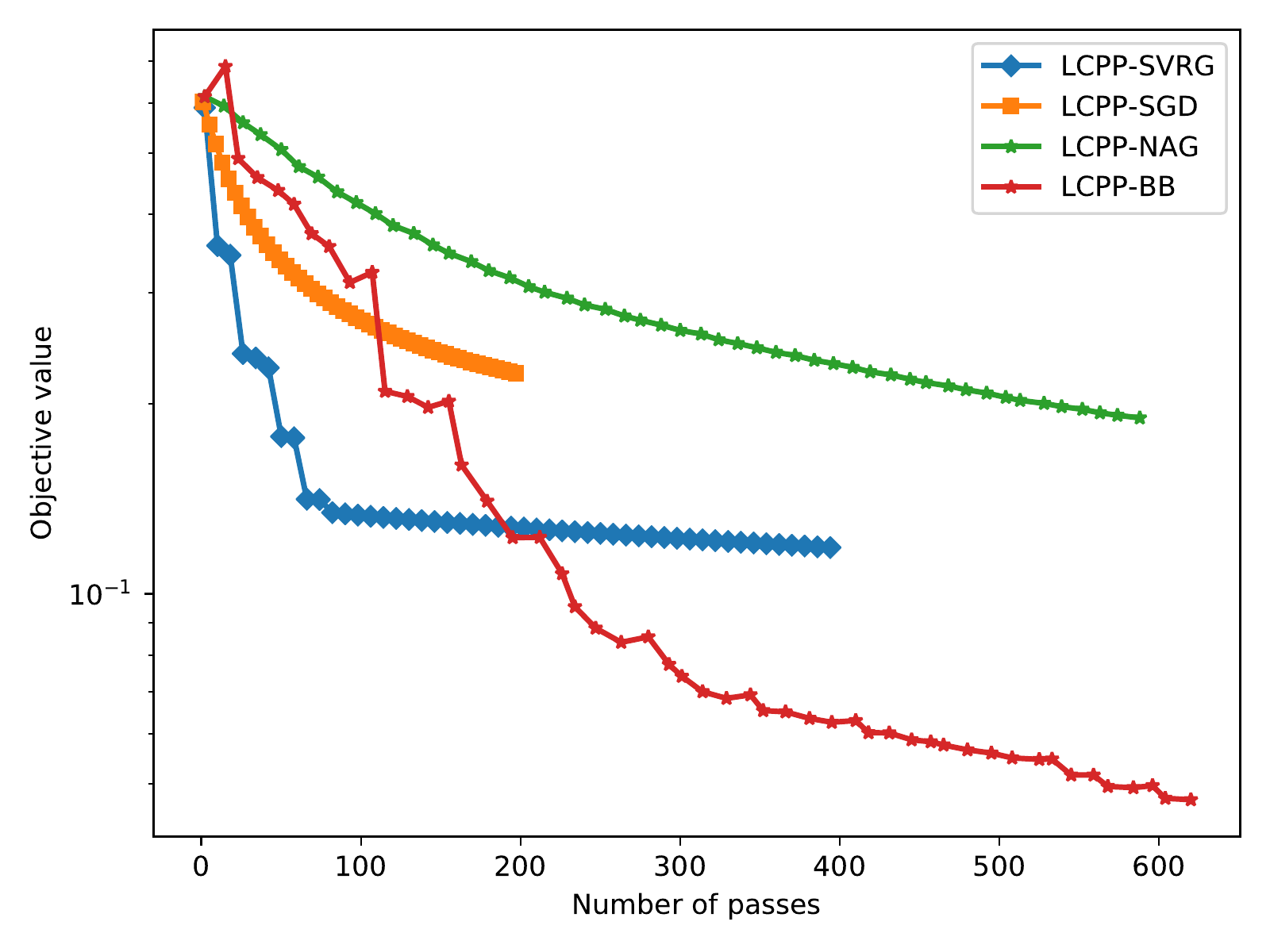}
	\includegraphics[scale=0.35]{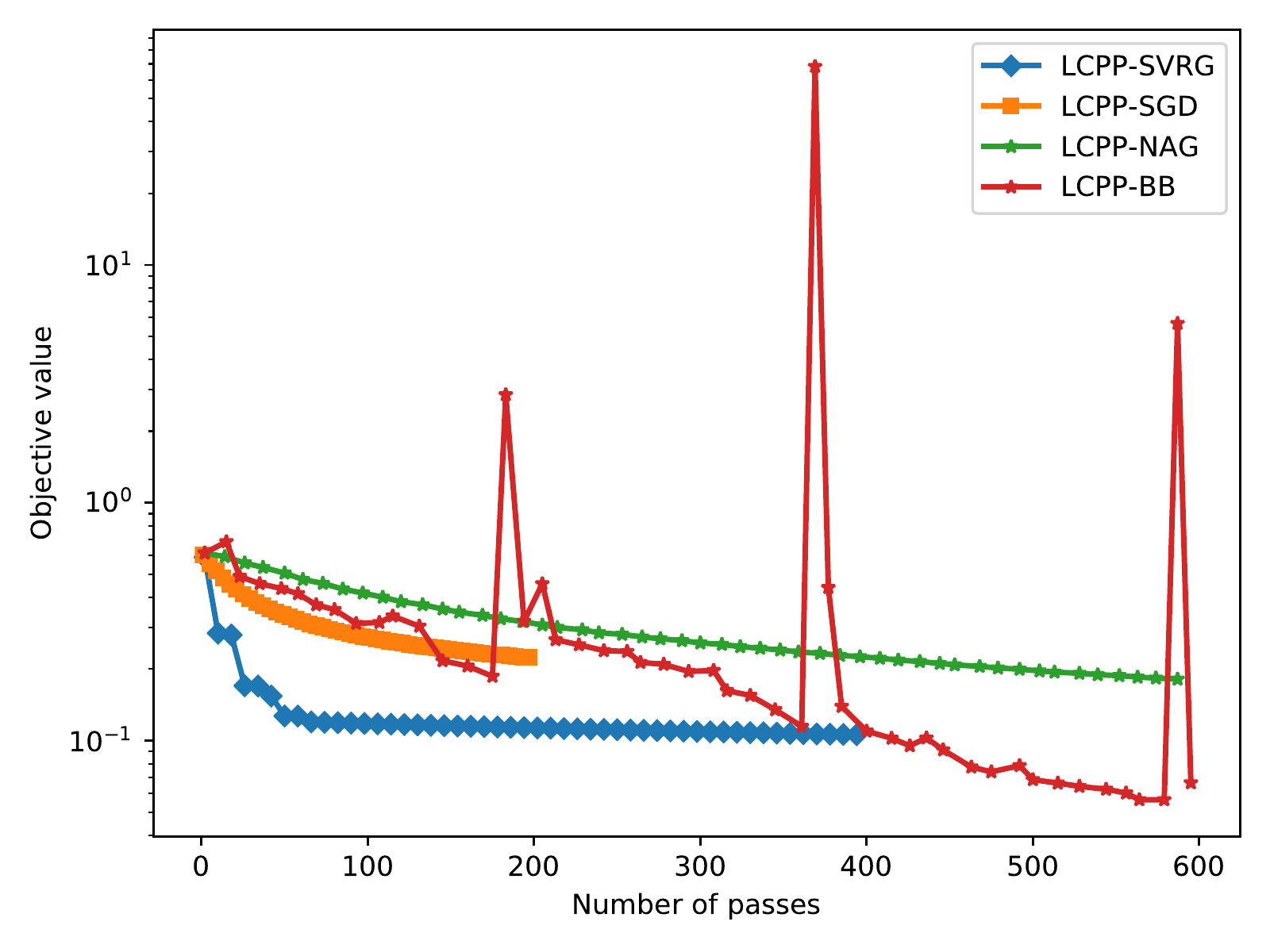}
	
	\caption{\label{fig:obj-pass-2}Objective value vs. number of effective passes over the
		dataset. Green, orange, blue and red curves represent NAG, SGD,
		SVRG and BB. We set $\eta=\alpha d $. First row: \texttt{gisette} ($\alpha=0.05,
		0.10$, left to right); second row: rcv1.binary, ($\alpha=0.10,0.20$),
		third row: real-sim ($\alpha=0.10,0.20$).}
\end{figure}

Figure~\ref{fig:obj-pass-2} shows the objective vs. number of effective
passes over the datasets. Here, each
effective pass evaluates one full gradient. 
We find that stochastic algorithms (LCPP-SGD, LCPP-SVRG) converge more rapidly than deterministic algorithms (LCPP-NAG, LCPP-BB) in the earlier stage, but they do not obtain higher accuracy in the long run. 
In all the tested datasets, we can observe that LCPP-BB outperforms the other three methods. 
Moreover, we remark that  stochastic gradient algorithms need to compute  projections more frequently than  deterministic algorithms. 
While our linesearch routine can efficiently perform projection, it is still more expensive than computing stochastic gradient, particularly, for the sparse data. Hence the overall running time of SGD algorithms is much worse than that of LCBB-BB. 
For the above reasons, we choose LCPP-BB as our default choice in the main experiment section.

\subsubsection*{Classification performance}
We conduct an additional experiment to compare the empirical performance of all the tested algorithms in sparse logistic regression.
We perform grid search based on five-fold cross-validation to find the best hyper-parameters. Then we retrain each model with the chosen hyper-parameter on the whole training dataset and report the classification performance on the testing data. Each experiment is repeated five times.
Hyper-parameters: 1) GIST: $\alpha=1$, $n\lambda\in\{10, 1, 0.1\}$ where $n$ is the size of training data, $\theta\in\{100, 10, 5, 1, 0.1, 0.01, 0.001\}$, 2) LCPP: $\lambda=2$, $\theta\in\{100, 10, 5, 1, 0.1, 0.01, 0.001\}$, $\eta=10^{-k}d$ where $k\in\{-3, -2.5,-2, -1.5, -1\}$, 3) Lasso: we set $C=C_010^s$ where $s=1+\tfrac{2}{3}k$, $k=0,1,2,...,9$, and $C_0$ is chosen by the l1\_min\_c function in Sklearn. 
 Table~\ref{tab:Testerror} summarizes the testing performance (mean and standard deviation) of each compared method. We can observe from this table that LCPP achieves the best performance on three out of the four datasets.
\begin{table}[h]
\caption{\label{tab:Testerror}Classification error ($\%$) of different methods for sparse logistic regression}	
\centering{}%
\begin{tabular}{cccc}
\toprule 
Datasets &GIST    & LCPP  &  LASSO  \tabularnewline
\midrule
\texttt{gisette}    &  $2.32 \pm 0.04$   &  $\mathbf{1.64 \pm 0.14}$  &   $1.84 \pm 0.05$  \tabularnewline
 \midrule
 \texttt{mnist}     & $2.57 \pm 0.01$    &  $\mathbf{2.52 \pm 0.02}$  &   $2.56 \pm 0.00$  \tabularnewline
 \midrule 
\texttt{rcv1.binary}&  $6.39 \pm 0.03$   &  $4.90 \pm 0.14$  &   $\mathbf{4.52 \pm 0.01}$ \tabularnewline
  \midrule
  \texttt{realsim}	& $ 3.50 \pm 0.04$   &  $\mathbf{3.03 \pm 0.00} $  &   $3.10 \pm 0.00$ \tabularnewline
  \bottomrule
\end{tabular}
\end{table}

\end{document}